\documentclass[a4paper,reqno,11pt]{amsart}

\usepackage{amsmath}
\usepackage{amssymb}
\usepackage{cases}
\usepackage{enumitem}
\allowdisplaybreaks[4]
\usepackage{mathrsfs}
\usepackage{yfonts}
\input xy
\xyoption{all}
\usepackage{mathtools}
\DeclarePairedDelimiter{\ceil}{\lceil}{\rceil}
\DeclarePairedDelimiter{\floor}{\lfloor}{\rfloor}

\newcommand{\defeq}{\mathrel{\mathop:}=}
\newtheorem{thm}{Theorem}[section]
\newtheorem{prop}[thm]{Proposition}

\newtheorem{lem}[thm]{Lemma}

\newtheorem{cor}[thm]{Corollary}

\theoremstyle{definition}
\newtheorem{definition}[thm]{Definition}

\theoremstyle{remark}
\newtheorem{remark}[thm]{Remark}

\numberwithin{equation}{section}

\newcommand{\bQ}{\mathbb{Q}}

\newcommand\OO{{\mathcal{O}}}

\newcommand{\bR}{{\mathbb R}}

\newcommand\lct{{\rm{lct}}}   

\newcommand\Supp{{\rm{Supp}}}

\begin{document}

\title{Bounded Complements for Weak Fano Pairs with Alpha-invariants and Volumes Bounded Below}
\date{\today}

\author{Weichung Chen}
\address{Graduate School of Mathematical Sciences, the University of Tokyo, Tokyo 153-8914, Japan.}
\email{chenweic@ms.u-tokyo.ac.jp}

\begin{abstract}
We show that fixed dimensional klt weak Fano pairs with alpha-invariants and volumes bounded away from $0$ and the coefficients of the boundaries belonging to a fixed DCC set $\mathscr{S}$ form a bounded family. Moreover, such pairs admit a strong $\epsilon$-lc $\bR$-complement for some fixed $\epsilon>0$. This is an improvement of \cite{Ch18}.
\end{abstract}

\keywords{}
\maketitle
\pagestyle{myheadings} \markboth{\hfill  Weichung Chen
\hfill}{\hfill  \hfill}


\section{Introduction}
Throughout this paper, we work over an uncountable algebraically closed field
of characteristic 0, for instance, the complex number field $\mathbb{C}$.

In the birational geometry, as the first step of moduli theory, it is interesting to consider whether a certain kind of family of varieties satisfy certain finiteness. For varieties of Fano type with bounded log discrepancies, Birkar shows in {\cite [Theorem 1.1]{Bir16b}} that
\begin{thm}\label{bab}
Fix a positive integer $d$ and a positive real number $\epsilon$. The projective varieties $X$ satisfying
\begin{enumerate}
\item $\dim X=d,$
\item there exists a boundary $B$ such that $(X, B)$ is $\epsilon$-lc, and
\item $-(K_X+B)$ is nef and big,
\end{enumerate}
form a bounded family.
\end{thm}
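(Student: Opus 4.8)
The plan is to follow Birkar's strategy and prove boundedness in four stages, running an induction on the dimension $d$: (1) use boundedness of complements to replace the arbitrary boundary $B$ by a uniformly controlled one; (2) establish \emph{effective birationality}, i.e.\ find $m=m(d,\epsilon)$ with $|-mK_X|$ defining a birational map; (3) deduce an upper bound for the anti-canonical volume $\Vol(-K_X)$, which yields birational boundedness of the family; and (4) upgrade birational boundedness to honest boundedness by exploiting the $\epsilon$-lc hypothesis to control the singularities of $X$ itself. Throughout one passes freely between $X$ and a $\bQ$-factorial small modification, and between Fano varieties and (generalised) log Fano pairs.

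For step (1) I would invoke the existence of bounded complements \cite{Bir16a}: there is $n=n(d)$ such that $(X,B)$ admits an $n$-complement $(X,B^+)$ with $(X,B^+)$ log canonical and $K_X+B^+\sim_{\bR}0$. Replacing $(X,B)$ by a pair built from such a complement, we may assume $K_X+B\sim_{\bR}0$, that $(X,B)$ is $\epsilon'$-lc for a fixed $\epsilon'>0$ with the coefficients of $B$ in a fixed finite set, and in particular that $X$ is of Fano type with $-K_X\sim_{\bR}B\ge 0$. For step (2) I would argue by induction on $d$: to separate two general points $x_1,x_2\in X$ it suffices to produce an effective $\bR$-divisor $\Delta\sim_{\bR}-tK_X$ with $t=t(d,\epsilon)$ bounded such that $(X,\Delta)$ has a minimal non-klt centre through $x_1$; cutting this centre down by adjunction and the lower-dimensional case of effective birationality, and then applying Kawamata--Viehweg/Nadel vanishing, produces the desired sections of $-mK_X$. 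The delicate point is to keep $t$ uniform: this uses the lower-dimensional volume bound together with the ACC for log canonical thresholds \cite{HMX14} to force the coefficients produced in the process into a DCC set bounded away from $0$.

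For step (3), once $|-mK_X|$ is birational let $\phi\colon X\dashrightarrow X'\subset\bP^N$ be the induced rational map; since $-mK_X$ is nef and big, $\deg X'\le\Vol(-mK_X)=m^d\,\Vol(-K_X)$, so it suffices to bound $\Vol(-K_X)$ from above. If this volume were unbounded, effective birationality would let us find, through a general point $x\in X$, members of $|-mK_X|$ of arbitrarily large multiplicity at $x$, hence an effective $\bR$-divisor $D\sim_{\bR}-K_X$ with $(X,D)$ not klt at a general point; as $X$ is of Fano type this contradicts the existence of a klt complement (equivalently, the $\epsilon$-lc property after a small perturbation). Hence $\deg X'$ and $\dim X'=d$ are bounded, so $X'$, and therefore $X$ up to birational equivalence, varies in a bounded family: $X$ is birationally bounded.

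The hard part will be step (4): passing from a bounded family $\mathcal{X}'\to T$ of birational models to boundedness of the varieties $X$ themselves. Here I would take a fibrewise log resolution of $\mathcal{X}'\to T$, transport a fixed relatively ample divisor $H'$ on $\mathcal{X}'$ back to $X$, and use that $-K_X$ is nef and big together with the $\epsilon$-lc hypothesis to show that the strict transform $H$ of $H'$ has bounded volume and bounded intersection numbers against $-K_X$ on $X$; one then runs a suitable $(K_X+\Gamma)$-MMP to reach a model lying in a bounded family. The $\epsilon$-lc hypothesis enters decisively at this point: it bounds the discrepancies of $X$ from below, so only boundedly many divisors are ever extracted, the boundaries produced have coefficients in a DCC set bounded away from $0$, and one may finally apply boundedness of log canonical pairs with bounded invariants to conclude. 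I expect this descent from the birational class to the variety, made uniform over the base $T$, to be the genuine obstacle, together with the a priori volume bound of step (3).
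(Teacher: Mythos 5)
The paper does not prove this statement: it is quoted verbatim from Birkar, \cite[Theorem~1.1]{Bir16b}, as background, and the present paper's contribution starts afterwards. So there is no ``paper's own proof'' to compare against; what you have written is an outline of Birkar's proof across \cite{Bir16a,Bir16b}, and it should be judged as such.

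Your four-stage skeleton (bounded complements $\rightarrow$ effective birationality $\rightarrow$ volume bound and birational boundedness $\rightarrow$ actual boundedness) does match the architecture of Birkar's two papers. However, the argument you give in step~(3) has a genuine gap. You claim that if $\Vol(-K_X)$ is unbounded one can produce an effective $D\sim_{\bR}-K_X$ with $(X,D)$ not klt at a \emph{general} point. That cannot happen: for any effective $\bR$-Cartier $D$ the non-klt locus $\mathrm{Nklt}(X,D)$ is a \emph{finite} union of non-klt centres, each a proper closed subvariety, so it is a proper closed subset of $X$ and cannot contain a general point. The correct mechanism (and the actual content of \cite{Bir16b}) is far more delicate: one produces, for each pair of general points, a divisor $D\sim_{\bR}-sK_X$ with $s$ small that has an isolated non-klt centre at one of them, then cuts that centre down by adjunction to a lower-dimensional log Fano-type pair, and runs a subtle induction coupling BAB in lower dimension with boundedness of lc thresholds (``singularities of linear systems''), generalized pairs, and the ACC theorems. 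The contradiction is not with ``the existence of a klt complement''; it comes out of this inductive machinery. Similarly, step~(4), the descent from birational boundedness to boundedness, is not a routine relative MMP over the bounded base but occupies most of \cite{Bir16b}. So as a literal proof this fails at step~(3), and as an outline it substantially understates where the difficulty lies.
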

Theorem \ref{bab} was known as the Borisov-Alexeev-Borisov (BAB) Conjecture for decades before Birkar proved it. Equivalently, we can state Theorem \ref{bab} in the following form of boundedness of varieties of Calabi--Yau type.

\begin{thm}\label{bab2}
Fix a positive integer $d$ and a positive real number $\epsilon$. The projective varieties $X$ satisfying
\begin{enumerate}
\item $\dim X=d,$
\item there exists a boundary $B$ such that $(X, B)$ is $\epsilon$-lc, and
\item $K_X+B \sim_{\mathbb{R}}0$ and $B$ is big,
\end{enumerate}
form a bounded family.
\end{thm}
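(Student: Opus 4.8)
Theorem~\ref{bab2} is equivalent to Theorem~\ref{bab}; since the latter is available, the plan is to deduce the former from it, the reverse implication being obtained by a symmetric (and easier) argument indicated at the end. So let $(X,B)$ satisfy conditions (1)--(3) of Theorem~\ref{bab2}. It is enough to produce a variety $X'$, isomorphic to $X$ in codimension one, together with a boundary $B'$ on $X'$ such that $(X',B')$ is $\epsilon$-lc of dimension $d$ and $-(K_{X'}+B')$ is nef and big: Theorem~\ref{bab} then puts $X'$ in a bounded family, and since a variety isomorphic in codimension one to a member of a bounded family of Fano type varieties again lies in a bounded family, the same holds for $X$.

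To construct $B'$, I would extract the positive part of $B$. Replacing $X$ by a small $\mathbb{Q}$-factorialization (crepant for $(X,B)$, hence harmless), write the Nakayama--Zariski decomposition $B=P_\sigma(B)+N_\sigma(B)$; since $B$ is big and effective, $P_\sigma(B)$ is big and movable, $N_\sigma(B)\ge 0$, and $N_\sigma(B)\le B$ coefficient-wise. Set $B'\defeq N_\sigma(B)$. From $N_\sigma(B)\le B$ we get $\mathrm{ord}_E N_\sigma(B)\le\mathrm{ord}_E B$ for every divisorial valuation $E$, hence $a(E;X,B')\ge a(E;X,B)\ge\epsilon$, so $(X,B')$ is again a boundary pair and is $\epsilon$-lc. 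Moreover, using $K_X+B\sim_{\mathbb{R}}0$,
\[
-(K_X+B')=-(K_X+B)+\bigl(B-N_\sigma(B)\bigr)\sim_{\mathbb{R}}P_\sigma(B),
\]
which is big. When $X$ is a surface $P_\sigma(B)$ is already nef, being the positive part of an honest Zariski decomposition, and Theorem~\ref{bab} applies to $(X,B')$ with the same $\epsilon$.

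The main obstacle is that for $\dim X\ge 3$ the class $P_\sigma(B)$ is only movable, not nef. To deal with this I would pass to a small $\mathbb{Q}$-factorial modification $X\dashrightarrow X'$ on which the strict transform of $P_\sigma(B)$ becomes semiample, and hence nef and big; such a modification exists because $X$, carrying the klt pair $(X,B')$ with $-(K_X+B')$ big, is of Fano type and therefore a Mori dream space. Being an isomorphism in codimension one, the modification is crepant for $(X,B')$, so it preserves both the dimension and the $\epsilon$-lc condition exactly, and the resulting pair on $X'$ feeds into Theorem~\ref{bab}. For the converse implication one runs the same idea in reverse: given $(X,B)$ with $-(K_X+B)$ nef and big, a multiple of $-(K_X+B)$ is base-point-free, so one may add to $B$ a general effective $\mathbb{R}$-divisor $\sim_{\mathbb{R}}-(K_X+B)$ with small coefficients to reach a klt Calabi--Yau pair with big boundary, to which Theorem~\ref{bab2} applies. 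The two points I expect to demand real care are (i) verifying that $X$ is of Fano type from the hypotheses --- this is where the bigness of $-K_X$ and the klt (indeed $\epsilon$-lc) condition, together with the existence of $\mathbb{R}$-complements, enter --- and (ii) the descent of boundedness along the small modification, i.e. recovering $X$ inside a bounded family from the one containing $X'$.
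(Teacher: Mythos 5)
The paper gives no proof here: Theorem~\ref{bab2} is simply asserted to be an equivalent restatement of Theorem~\ref{bab}, so there is no argument of the author's to compare yours against, and your attempt to actually establish the equivalence is the right instinct. Your route (Nakayama--Zariski decomposition of $B$, then an SQM making the positive part nef) is sensible, and using the $P_\sigma/N_\sigma$ decomposition rather than an arbitrary big decomposition $B\sim_{\mathbb{R}}A+E$ is exactly what keeps the subsequent MMP small, which you need later. However, two points are not actually handled. First, your stated reason for $X$ being of Fano type is wrong: a klt pair $(X,B')$ with $-(K_X+B')$ merely big (not nef) is not known to force $X$ to be of Fano type, and invoking it at this stage is essentially circular. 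The correct justification is that $(X,B)$ itself is a big klt log Calabi--Yau pair, and the existence of such a pair is a standard characterisation of Fano type (Prokhorov--Shokurov); that is also what guarantees the $P_\sigma(B)$-MMP terminates and, since $P_\sigma(B)$ is movable, consists only of flips.

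Second, the sentence "a variety isomorphic in codimension one to a member of a bounded family of Fano type varieties again lies in a bounded family" carries the entire weight of the descent from $X'$ back to $X$, and you neither prove it nor cite it. It is true, but it is a genuinely nontrivial fact (roughly: the generic fibre of the bounding family is a Mori dream space with finitely many SQMs, each of which spreads out over an open subscheme of the base; conclude by Noetherian induction), and as written you have bounded only $X'$, not $X$. You should supply that argument or give a reference. The reverse direction, adding $\tfrac{1}{m}$ of a general member of a base-point-free multiple of $-(K_X+B)$, is fine modulo two small things worth making explicit: perturb $B$ to $\mathbb{Q}$-coefficients before invoking base-point freeness, and record which $\epsilon'$ you feed into Theorem~\ref{bab2}, e.g.\ $\epsilon'=\min(\epsilon,1/2)$ once $m\ge 2$.
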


 In Theorem \ref{bab}, it is necessary to take $\epsilon>0$. In fact, klt Fano threefolds do not even form a birational family (see {\cite {Lin03}}). Nevertheless, Jiang shows in {\cite {Jia17}} that if we bound the alpha-invariants and the volumes from below, we have 

\begin{thm}[{\cite[Theorem1.6]{Jia17}}]\label{jiangbdd}
Fix a positive integer $d$ and a positive real number $\theta$. The normal projective klt Fano (i.e. $\bQ$-Fano in \cite {Jia17}) varieties $X$ satisfying
\begin{enumerate}
\item $\dim X=d$, and
\item $\alpha (X)^d(-K_X)^d>\theta$,
\end{enumerate}
form a bounded family.
\end{thm}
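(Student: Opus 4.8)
The plan is to reduce the statement to Birkar's Theorem~\ref{bab} (equivalently Theorem~\ref{bab2}). It suffices to establish two things: (I) an \emph{a priori} upper bound $(-K_X)^d\le M(d,\theta)$ on the anticanonical volume, which by the hypothesis immediately forces $\alpha(X)\ge\alpha_0:=(\theta/M)^{1/d}>0$; and (II) that fixed-dimensional klt Fano varieties with $(-K_X)^d$ bounded above and $\alpha(X)$ bounded below form a bounded family. Stage (II) is then handled by the effective birationality technology of Hacon--McKernan--Xu together with Theorem~\ref{bab}; the conceptual novelty of \cite{Jia17} — and, as it turns out, the more elementary part — is Stage (I).

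For Stage (I) I would argue by contradiction: a very large anticanonical volume produces a very singular anticanonical $\mathbb{Q}$-divisor, contradicting $\alpha(X)\ge(\theta/(-K_X)^d)^{1/d}$. Write $V=(-K_X)^d$. Since $X$ is a klt Fano it is rationally connected, so through a general point there passes a rational curve $C$ with $-K_X\cdot C$ bounded in terms of $d$ (Mori's bend-and-break). Comparing $h^0(X,-mK_X)\sim\frac{V}{d!}m^d$ with the $O_d(t^{d-1}m^d)$ linear conditions cut out by vanishing to order $\sim tm$ along $C$ (the exponent $d-1$, rather than $d$, being exactly the gain from using a curve instead of a point) one produces an effective $D\sim_{\mathbb{Q}}-K_X$ with $\operatorname{mult}_CD\gtrsim_d V^{1/(d-1)}$. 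Taking $E$ to be the exceptional divisor of the blow-up of $C$ at a general (smooth) point, one has $a(E,X)=\operatorname{codim}_XC=d-1$ and $\operatorname{ord}_ED\ge\operatorname{mult}_CD$, hence $a(E;X,sD)<0$ for $s>(d-1)/\operatorname{mult}_CD$, so $\operatorname{lct}(X,D)\le(d-1)/\operatorname{mult}_CD$ and therefore $\alpha(X)^{d-1}(-K_X)^d\le C_1(d)$. Feeding this into $\alpha(X)^d(-K_X)^d>\theta$ gives $\alpha(X)>\theta/C_1(d)$ and then $(-K_X)^d\le C_1(d)^d/\theta^{d-1}=:M(d,\theta)$.

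For Stage (II): with $(-K_X)^d$ bounded above (and below, once effective non-vanishing is in hand) and $\alpha(X)\ge\alpha_0$, one runs the Hacon--McKernan--Xu argument for effective birationality of $|-mK_X|$. The lower bound on $\alpha(X)$ is precisely what keeps the coefficients of the auxiliary anticanonical divisors under control when one creates and then descends non-klt centers through two general points, so that $|-mK_X|$ is birational for all $m\ge m_0(d,\theta)$; hence $X$ is birationally bounded. To upgrade this to honest boundedness one invokes the boundedness results surrounding ACC for log canonical thresholds and Theorem~\ref{bab}; alternatively, fixing a bounded $m$ with $|-mK_X|$ base-point free, one takes general $S_1,S_2\in|-mK_X|$ and sets $\Delta=\tfrac{1}{2m}(S_1+S_2)\sim_{\mathbb{Q}}-K_X$, so $(X,\Delta)$ is klt, $K_X+\Delta\sim_{\mathbb{Q}}0$ and $\Delta$ is big, and — using $\alpha(X)\ge\alpha_0$ and the boundedness of the relevant geometry to control singularities along $\Delta$ — one checks $(X,\Delta)$ is $\epsilon$-lc for a uniform $\epsilon=\epsilon(d,\theta)>0$, so that Theorem~\ref{bab2} applies. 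The genuine difficulty, and the reason one cannot stay at the level of the elementary estimates of Stage (I), is the uniform control of the singularities of $X$ and of the auxiliary divisors purely in terms of $d$ and $\theta$; this is where the deep effective-birationality and ACC inputs are indispensable.
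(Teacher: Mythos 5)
The paper does not prove this statement; it is quoted verbatim from \cite{Jia17} (Theorem~1.6 there), and the subsequent results in the paper all work under the \emph{a priori} stronger hypothesis that $\alpha$ and the volume are each bounded below rather than only their product. So there is no internal proof to compare against, and I am evaluating your proposal on its own merits and against Jiang's original argument.

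Your Stage~(I) is correct in outline and captures the crucial novelty of \cite{Jia17}. The standard Koll\'ar bound (Lemma~\ref{kollar}) gives $\alpha(X)^d(-K_X)^d\le d^d$, which combined with $\alpha^d(-K_X)^d>\theta$ yields nothing. The improvement $\alpha(X)^{d-1}(-K_X)^d\le C_1(d)$ is exactly what converts the product hypothesis into separate bounds $\alpha>\theta/C_1(d)$ and $(-K_X)^d\le C_1(d)^d/\theta^{d-1}$, and your derivation of it is right: through a general point there is a rational curve $C$ of bounded anticanonical degree (bend-and-break on a resolution, using that klt Fano varieties are rationally connected), the dimension count along $C$ produces $D\sim_{\bQ}-K_X$ with $\operatorname{mult}_CD\gtrsim_d\big((-K_X)^d/(-K_X\cdot C)\big)^{1/(d-1)}$, and the valuation of the exceptional divisor of the blow-up of $X$ along $C$ at a general smooth point has log discrepancy $d-1$, giving $\operatorname{lct}(X,D)\le(d-1)/\operatorname{mult}_CD$. (Minor wording: this is the blow-up of $X$ along $C$, not ``the blow-up of $C$'', and the estimate on $\operatorname{mult}_CD$ carries the factor $(-K_X\cdot C)$ in the denominator, which is why the degree bound on $C$ is needed.)

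Stage~(II) is where the proposal has genuine gaps. You need a uniform lower bound on $(-K_X)^d$ before you can run effective birationality in the style of Theorem~\ref{thm2}/Corollary~\ref{cor2}, and you defer this to ``effective non-vanishing''; but effective non-vanishing for klt Fano varieties in all dimensions is itself a deep input (essentially part of the theory of complements), so one should instead extract a lower bound on $(-K_X)^d$ directly, e.g.\ by establishing a uniform upper bound on $\alpha(X)$ in terms of $d$ alone and then using $\alpha^d(-K_X)^d>\theta$. This step is not optional and should be written out. More seriously, your ``alternative'' route for Stage~(II) is circular: you fix ``a bounded $m$ with $|-mK_X|$ base-point free'', but uniform base-point freeness of $|-mK_X|$ for a bounded $m$ is essentially equivalent to the boundedness one is trying to prove, and without it the general members $S_1,S_2$ can be very singular along the base locus, so the asserted klt/$\epsilon$-lc property of $(X,\tfrac1{2m}(S_1+S_2))$ has no justification. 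The first route you mention is the viable one and is the one actually used (both by Jiang and by this paper in the pair version Theorem~\ref{thm4.2}): effective birationality of $|-mK_X|$ for a bounded $m$, hence birational boundedness, followed by the boundedness of klt log Calabi--Yau pairs of Fano type (Theorem~\ref{hx}) to upgrade birational boundedness to boundedness.
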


Inspired by Theorem \ref{jiangbdd}, it is natural to ask if certain boundedness holds for varieties of Fano type or under more general setting. Thanks to boundedness of complements by Birkar (Theorem \ref{bddcomp}), if the coefficients of the boundaries are well controlled, then the boundedness, which is one of our main theorems, holds as follows.

\begin{thm}\label{main}
Fix a positive integer $d$, positive real numbers $\theta$ and $\delta$ and a finite set $\mathscr{R}$ of rational numbers in $[0,1]$. The set of all klt Fano pairs $(X,B)$ satisfying
\begin{enumerate}
\item $\dim X=d$,
\item the coefficients of $B\in\Phi(\mathscr{R})$,
\item $\alpha(X,B)>\theta$, and
\item $(-(K_X+B))^d>\theta$
\end{enumerate}
is log bounded.
Moreover, there exists a natural number $k$, depending only on $d$, $\theta$, $\delta$ and $\mathscr{S}$ such that there exists a strong klt $k$-complement $K_X+\Theta$ of $K_X+B$.
\end{thm}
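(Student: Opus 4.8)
The plan is to use the lower bounds on $\alpha(X,B)$ and on $(-(K_X+B))^d$ to force $(X,B)$ to be $\epsilon$-lc for some fixed $\epsilon=\epsilon(d,\theta)>0$, to deduce log boundedness from Theorem~\ref{bab}, and then to invoke boundedness of complements once more over the resulting bounded family to produce the $k$-complement. The first reduction I would make is via Theorem~\ref{bddcomp}: since $(X,B)$ is of Fano type with $\dim X=d$ and coefficients in $\Phi(\mathscr R)$, there are $n=n(d,\mathscr R)$ and a monotonic $n$-complement $K_X+B^+$, so that $B^+\ge B$, $(X,B^+)$ is lc, $n(K_X+B^+)\sim 0$, and $G:=B^+-B\ge 0$ satisfies $G\sim_\bQ-(K_X+B)$, $nG\in|-n(K_X+B)|$ and $\lct(X,B;G)\ge\alpha(X,B)>\theta$; in particular $-(K_X+B)$ is big and some bounded multiple of it is Cartier.

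The heart of the matter — and, I expect, the main obstacle — is to show that $(X,B)$ is $\epsilon$-lc; here I would adapt the valuation-theoretic argument behind Theorem~\ref{jiangbdd}. A dimension count first bounds the volume from above: making a member of $|-m(K_X+B)|$ vanish to high order at a general smooth point of $X\setminus\Supp B$ produces $D\sim_\bR-(K_X+B)$, $D\ge 0$, whose multiplicity there is $\approx((-(K_X+B))^d)^{1/d}$, so $\theta<\alpha(X,B)\le\lct(X,B;D)\le d/\mult_x D$ forces $(-(K_X+B))^d\le(d/\theta)^d$. To bound the singularities, let $v=\operatorname{ord}_E$ compute $\mu:=\mld(X,B)<1$, with $E$ a Kollár component over a closed point; the analogous count, now controlling $v(D)$ by the volume of the valuation, yields $D\sim_\bR-(K_X+B)$ with $v(D)\gtrsim\big((-(K_X+B))^d/\operatorname{vol}(v)\big)^{1/d}$, and then $\alpha(X,B)\le\lct(X,B;D)\le A_{X,B}(v)/v(D)=\mu/v(D)$ gives $\mu^d\operatorname{vol}(v)\gtrsim\theta^{d}(-(K_X+B))^d>\theta^{d+1}$. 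By adjunction $(E,\operatorname{Diff})$ is a $(d-1)$-dimensional klt Fano with $-(K_E+\operatorname{Diff})=-\mu\,E|_E$ and $\operatorname{vol}(v)=(-(K_E+\operatorname{Diff}))^{d-1}/\mu^{d-1}$; the delicate step is to propagate the bound on $\alpha$ down to $(E,\operatorname{Diff})$ by inversion of adjunction, using the volume lower bound to lift divisors through the center, after which the volume bound just proved, applied in dimension $d-1$, gives $(-(K_E+\operatorname{Diff}))^{d-1}\le C(d,\theta)$ and hence, substituting back, $\mu\ge\epsilon(d,\theta)>0$.

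Granting this, $(X,B)$ is an $\epsilon$-lc pair with $-(K_X+B)$ nef and big, so Theorem~\ref{bab} shows that $X$ lies in a bounded family. Choosing on it a very ample $H$ of bounded degree with $-K_X\cdot H^{d-1}$ bounded, the inequality $0\le-(K_X+B)\cdot H^{d-1}=-K_X\cdot H^{d-1}-B\cdot H^{d-1}$ bounds $\deg_H B$, and since the coefficients of $B$ lie in the DCC set $\Phi(\mathscr R)$ they are bounded below; hence $(X,B)$ is log bounded, which is the first assertion.

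For the $k$-complement I would run boundedness of complements once more: over this bounded family of klt Fano pairs with coefficients in $\Phi(\mathscr R)$, Theorem~\ref{bddcomp} yields a uniform complement index, and the ampleness of $-(K_X+B)$ lets one take the complement klt. Concretely, after spreading the family out as $(\mathcal X,\mathcal B)\to T$ with $-(K_{\mathcal X/T}+\mathcal B)$ relatively ample and Gorenstein indices bounded, fix $m\gg 0$ uniform with $-m(K_{\mathcal X/T}+\mathcal B)$ relatively very ample, take general members $\mathcal G_1,\dots,\mathcal G_m$ of $|-m(K_{\mathcal X/T}+\mathcal B)|$, and set $\Theta:=B+\frac1{m^2}\sum_i\mathcal G_i|_X$. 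Then $\Theta\ge B$, $K_X+\Theta\sim_\bQ 0$ with $k(K_X+\Theta)\sim 0$ for a suitable fixed $k$ (a common multiple of $m^2$ and of the Gorenstein, denominator and torsion indices of the family), and a Bertini argument over $T$ — the fibers being $\epsilon$-lc and the added coefficients only $1/m^2$ — shows $(X,\Theta)$ is klt; thus $K_X+\Theta$ is the desired strong klt $k$-complement, with $k$ depending only on $d$, $\theta$, $\delta$ and $\mathscr R$.
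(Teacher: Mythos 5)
Your overall architecture is genuinely different from the paper's: you want to establish directly that $(X,B)$ is $\epsilon$-lc for an $\epsilon=\epsilon(d,\theta,\mathscr R)>0$, feed this into Theorem~\ref{bab} to bound $X$, bound $\deg_H B$ to get log boundedness, and then construct the complement by a Bertini argument over the resulting family. The paper never proves $\epsilon$-lc-ness directly; it instead uses Birkar's effective-birationality machinery (Corollary~\ref{cor2} together with {\cite[Proposition 4.4]{Bir16a}}, Proposition~\ref{birkar2}) to build a klt $k$-complement $K_X+\Theta\ge K_X+B$ with coefficients in a fixed finite set, and then deduces log boundedness from Theorem~\ref{hx}. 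In the paper the $\epsilon$-lc statement comes out at the very end as a \emph{by-product} of the complement construction.

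The gap in your proposal is exactly the step you yourself flag as ``the delicate step.'' The reduction to a Koll\'ar component $E$ over a closed point is not justified: the minimal log discrepancy of $(X,B)$ need not be computed by a Koll\'ar component, and more importantly the worst valuations may have centres of positive dimension (for instance the generic point of a component of $B$ whose coefficient in $\Phi(\mathscr R)$ is close to $1$), where the ``Kollár component over a closed point'' set-up does not apply at all. Even granting a Koll\'ar component, the inductive propagation of the inequality $\alpha>\theta$ to the adjunction pair $(E,\operatorname{Diff})$ is asserted but not argued: inversion of adjunction transfers log canonicity of restrictions, but it does \emph{not} on its own give a lower bound for $\operatorname{lct}((E,\operatorname{Diff}),|-(K_E+\operatorname{Diff})|_\bR)$ from $\alpha(X,B)$. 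Making such a lifting precise is essentially the content of the normalized-volume approach of \cite{LLX18} (Theorem~\ref{LLX}), which is a substantial independent theorem; you cannot invoke it as a sub-step without importing that whole machinery. As written, the claim that $(X,B)$ is $\epsilon$-lc is therefore unproved, and since both Theorem~\ref{bab} and your Bertini construction depend on it, the rest of the argument does not go through.

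Two smaller points, should you try to salvage the strategy: first, the Bertini step needs a uniform Cartier index and a uniform very-ampleness bound for $-m(K_{\mathcal X/T}+\mathcal B)$ over the family, together with a quantitative comparison $1/m^2<\epsilon/d$ to conclude klt-ness after adding the general members; these are all available over a log bounded family but must be said. Second, your degree bound for $B$ uses that $\Phi(\mathscr R)$ has a positive minimum among its nonzero elements; this is true for finite $\mathscr R$ but worth recording, since for the DCC version (Theorem~\ref{2}) the same argument needs $\inf(\mathscr S\setminus\{0\})>0$, which a general DCC set does satisfy but only because $0$ is isolated or absent.
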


Combining Theorem \ref{main} with boundedness of complements for DCC coefficients of boundaries by Han, Liu and Shokurov, we can weaken the assumption of the coefficients of boundaries as in the following theorem.
\begin{thm}\label{2}
Fix a positive integer $d$, positive real numbers $\theta$ and $\delta$ and a DCC set $\mathscr{S}$ of real numbers in $[0,1]$. The set of all klt Fano pairs $(X,B)$ satisfying
\begin{enumerate}
\item $\dim X=d$,
\item the coefficients of $B\in\mathscr{S}$,
\item $\alpha(X,B)>\theta$, and
\item $(-(K_X+B))^d>\theta$,
\end{enumerate}
is log bounded.
Moreover, there exists a natural number $k$, finite sets $\Gamma_1=\{a_i\}_i\in[0,1]$ with $\sum_i a_i=1$ and $\Gamma_2\in[0,1]\cap\bQ$ depending only on $d$, $\theta$, $\delta$ and $\mathscr{S}$ such that there exists a strong klt $(k,\Gamma_1,\Gamma_2)$-complement $K_X+\Theta$ of $K_X+B$.
\end{thm}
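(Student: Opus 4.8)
The plan is to run the argument of Theorem~\ref{main} with Birkar's boundedness of complements (Theorem~\ref{bddcomp}) replaced by its DCC‑coefficient generalisation of Han, Liu and Shokurov; the cost is that the complements so produced are $\bR$‑complements rather than $k$‑complements, and the whole proof must be reorganised around the notion of a $(k,\Gamma_1,\Gamma_2)$‑complement. Throughout, fix a pair $(X,B)$ as in the statement; since $-(K_X+B)$ is nef and big, $X$ is of Fano type.

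I would first dispose of the ``moreover'' part. As $\dim X=d$, the coefficients of $B$ lie in the DCC set $\mathscr S$, and $-(K_X+B)$ is nef, the Han--Liu--Shokurov complement theorem in dimension $d$ for $\mathscr S$ produces a positive integer $k$ and finite sets $\Gamma_1=\{a_i\}_i\subset[0,1]$ with $\sum_i a_i=1$ and $\Gamma_2\subset[0,1]\cap\bQ$, all depending only on $d$ and $\mathscr S$, together with a monotone $(k,\Gamma_1,\Gamma_2)$‑complement $K_X+\Theta$ of $K_X+B$: thus $\Theta\ge B$, $K_X+\Theta\sim_{\bR}0$, and $\Theta=\sum_i a_i\Theta_i$ with each $K_X+\Theta_i$ a genuine $k$‑complement, so that the coefficients of $\Theta$ lie in the fixed finite set $\{\,\sum_i a_i m_i/k : a_i\in\Gamma_1,\ 0\le m_i\le k\,\}$. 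It remains to make this complement \emph{klt}, i.e.\ to arrange $(X,\Theta)$ klt; this is done exactly as in Theorem~\ref{main}, the lower bound $\alpha(X,B)>\theta$ being what prevents a non‑klt centre of $(X,\Theta)$ after the standard perturbation. This proves the last assertion of the theorem.

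For the log boundedness it is enough to bound $X$: then $\Supp B$ has bounded degree, and since the coefficients of $B$ lie in the DCC set $\mathscr S$ (so in particular are bounded below by a fixed positive number) the pairs $(X,B)$ form finitely many log bounded families. To bound $X$ I would repeat the proof of Theorem~\ref{main} with the $\bR$‑complement $K_X+\Theta$ above in the role played there by the $k$‑complement. Concretely, $N:=\Theta-B$ is an effective $\bR$‑divisor with $N\sim_{\bR}-(K_X+B)$, hence nef and big with $\Vol(N)=(-(K_X+B))^d>\theta$ and coefficients in a fixed set determined by $\mathscr S$, $k$, $\Gamma_1$, $\Gamma_2$; combined with $\alpha(X,B)>\theta$ this controls the singularities of anticanonical $\bR$‑linear systems on $X$, after which the boundedness argument of Theorem~\ref{main} goes through with $\bQ$‑divisors replaced by $\bR$‑divisors. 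The step I expect to be the main obstacle is precisely this transfer: one must check that every place in the proof of Theorem~\ref{main} where integrality or rationality was used — above all the invocation of Theorem~\ref{bddcomp} and the finiteness arguments depending on it — still works with only an $\bR$‑complement, and that the resulting data can be repackaged in $(k,\Gamma_1,\Gamma_2)$‑form with $k,\Gamma_1,\Gamma_2$ depending only on $d,\theta,\delta$ and $\mathscr S$. The natural device is that a $(k,\Gamma_1,\Gamma_2)$‑complement is a convex combination of honest $k$‑complements, so that after suitable bookkeeping the needed finiteness and boundedness statements reduce, one vertex at a time, to the ones already available in the hyperstandard case; carrying this reduction out uniformly, while keeping track of the lower bounds on $\alpha$ and on the anticanonical volume, is where the work lies.
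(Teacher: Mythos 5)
Your plan and the paper's proof diverge at the key step, and what you label as ``the main obstacle'' is exactly what the paper resolves by a device your sketch never finds. Concretely, two steps in your outline would not go through as stated.

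First, the ``moreover'' part. You propose to apply Han--Liu--Shokurov to $(X,B)$ directly to get a (possibly merely lc) $(k,\Gamma_1,\Gamma_2)$-complement and then ``make it klt'' by ``the standard perturbation as in Theorem~\ref{main}.'' But the proof of Theorem~\ref{main} (Theorem~\ref{thm4.2}) does not perturb a pre-existing complement into a klt one: it builds a new complement $\Theta=\tfrac12\Delta+\tfrac12\Omega$ from scratch, where $\Delta$ comes from a bounded log-smooth couple $(\overline W,\Sigma_{\overline W})$ produced via Proposition~\ref{birkar2} applied to $M=\floor{m(B^+-B)}$, and $\Omega$ is a new complement of a modified boundary with $G$, $R$, $\{m(B^+-B)\}$ mixed in. There is no sense in which the $\alpha$-invariant bound ``prevents a non-klt centre after a standard perturbation.''

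Second, the boundedness part. You acknowledge that the transfer from $k$-complements to $\bR$-complements is the crux, but then leave it as ``where the work lies.'' It genuinely fails if attempted directly: with only an $\bR$-complement $K_X+\Theta$, the divisor $\Theta-B$ is an $\bR$-divisor, and $\floor{m(\Theta-B)}$ is not naturally a Cartier divisor moving in a linear system, so Corollary~\ref{cor2} and Proposition~\ref{birkar2} do not apply. The paper sidesteps this entirely: it never works with the HLS complement of $(X,B)$ itself. Instead, Theorem~\ref{g'} produces rational functions $g'_i:\mathscr S\to\mathscr S_i$ (with each $\mathscr S_i$ a rational DCC set) and weights $a_i$ such that the rational boundaries $B^{(i)}:=\sum_j g'_i(b_j)B_j$ satisfy simultaneously $B\le\sum_i a_i B^{(i)}$ and $B^{(i)}\le\tfrac{B+3\sum_j g(b_j)B_j}{4}$. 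The second inequality, combined with $\alpha(X,B)>\theta$ and $\mathrm{vol}(-(K_X+B))>\theta$, yields $\alpha(X,B^{(i)})\ge\tfrac14\theta$ and $\mathrm{vol}(-(K_X+B^{(i)}))\ge(\tfrac14)^d\theta$ after an MMP making $(X,B^{(i)})$ weak Fano. Now the \emph{rational} Theorem~\ref{thm4.2} applies to each $(X,B^{(i)})$, giving a klt $k$-complement $K_X+\Theta_i$, and $\Theta=\sum_i a_i\Theta_i$ is the desired klt $(k,\Gamma_1,\Gamma_2)$-complement with $\Theta\ge B$. Log boundedness then follows by feeding the $(X,\Theta_i)$'s into Theorem~\ref{hx}. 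So the crucial idea you are missing is to ``round down'' the real coefficients to a convex combination of rational DCC boundaries in such a way that the $\alpha$-invariant and volume hypotheses degrade only by a fixed constant factor; this is what lets the already-proved rational case do all the heavy lifting.
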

\begin{remark}
Chi Li, Yuchen Liu and Chenyang Xu show the following boundedness theorem using normalized volume.
\begin{thm}[{\cite[Corollary 6.14]{LLX18}}]\label{LLX}
Fix a positive integer $d$ and a positive real number $\theta$. Then the set of all projective varieties $X$ satisfying
\begin{enumerate}
\item $\dim X=d$,
\item $(X,B)$ is a klt weak Fano pair for some effective $\mathbb{Q}$-divisor $B$ on $X$, and
\item $\alpha(X,B)^d(-(K_X+B))^d>\theta$,
\end{enumerate}
is bounded.
\end{thm}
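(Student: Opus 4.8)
\emph{Step 1 (reduction to the log Fano case).} Since $(X,B)$ is klt and $-(K_X+B)$ is nef and big, the divisor $m(-(K_X+B))=K_X+B+(m+1)(-(K_X+B))$ is base--point--free for $m\gg0$, so $-(K_X+B)$ is semiample; let $\phi\colon X\to X^{a}$ be the resulting anticanonical model, $B^{a}=\phi_{*}B$. Then $(X^{a},B^{a})$ is klt with $-(K_{X^{a}}+B^{a})$ ample, $K_X+B=\phi^{*}(K_{X^{a}}+B^{a})$, and $(-(K_{X^{a}}+B^{a}))^{d}=(-(K_X+B))^{d}$; moreover $-(K_X+B)=\phi^{*}(-(K_{X^{a}}+B^{a}))$ identifies the graded section rings, so every effective $D\sim_{\bQ}-(K_X+B)$ equals $\phi^{*}D^{a}$ with $\lct(X,B;D)=\lct(X^{a},B^{a};D^{a})$, whence $\alpha(X,B)=\alpha(X^{a},B^{a})$. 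Thus the hypotheses pass to $(X^{a},B^{a})$. Conversely the $\phi$--exceptional prime divisors occur in $B$ with coefficient $0$ and are crepant over $(X^{a},B^{a})$, so a boundedness result for crepant models of a bounded family (cf. \cite{LLX18}) recovers boundedness of $X$ from that of $X^{a}$. Hence we may assume $-(K_X+B)$ ample and set $V\defeq(-(K_X+B))^{d}$.

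\emph{Step 2 (the cone and the key estimate).} Form the affine orbifold cone $(Y,\Delta_Y)$ over $(X,B)$ with vertex $o$, polarized by $-(K_X+B)$; this is a klt singularity of dimension $d+1$. By Li's formula for the normalized volume of a cone (see \cite{LLX18} and the references therein),
$$c(d)\,\min\!\big(\alpha(X,B),1\big)^{d}\,V\ \le\ \widehat{\mathrm{vol}}(o;Y,\Delta_Y)\ \le\ V ,$$
for a dimensional constant $c(d)>0$: the right inequality is $\widehat{\mathrm{vol}}$ evaluated at the canonical valuation $\mathrm{ord}_{X}$ (log discrepancy $1$, volume $V$), and the left one comes from the comparison of $\alpha$ with the stability threshold together with Fujita's estimate. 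Since $\alpha(X,B)$ is bounded above by a dimensional constant (by the elementary high--multiplicity estimate $\alpha(X,B)\le d/V^{1/d}$, which alone gives only $\alpha^{d}V\le d^{d}$) and $\alpha(X,B)^{d}V>\theta$, we obtain $\widehat{\mathrm{vol}}(o;Y,\Delta_Y)>\epsilon_{0}$ for some $\epsilon_{0}=\epsilon_{0}(d,\theta)>0$.

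\emph{Step 3 (boundedness from the local volume).} Now invoke the theory of minimizers of $\widehat{\mathrm{vol}}$: the minimizer $v^{*}$ of $(Y,\Delta_Y;o)$ exists, is quasi--monomial and finitely generated, and the associated degeneration has central fibre a K--semistable Fano cone $(Y_{0},\Delta_{Y_{0}};\xi_{0})$ with $\widehat{\mathrm{vol}}(o_{0};Y_{0},\Delta_{Y_{0}})=\widehat{\mathrm{vol}}(o;Y,\Delta_Y)>\epsilon_{0}$. A K--semistable Fano cone of dimension $d+1$ with normalized volume $>\epsilon_{0}$ is bounded: its base is a K--semistable log Fano pair of dimension $\le d$ whose anticanonical volume lies in $(\epsilon_{0}(d+1)^{-d},(d+1)^{d}]$ --- bounded above by Fujita's volume inequality, bounded below by the $\widehat{\mathrm{vol}}$--bound --- and whose orbifold index is bounded (again from $\widehat{\mathrm{vol}}>\epsilon_{0}$), hence it is bounded by boundedness of K--semistable log Fano pairs of bounded volume, resting ultimately on \cite{Bir16b}. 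Bounding the degeneration then bounds $(Y,\Delta_Y)$ as a polarized cone, hence $(X,B)$ together with $-(K_X+B)$, hence $X$.

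\emph{Main obstacle.} The heart of the matter is the last step: the lower bound $\widehat{\mathrm{vol}}(o;Y,\Delta_Y)>\epsilon_{0}$ must be used not only to bound the K--semistable degeneration $(Y_{0},\Delta_{Y_{0}})$, but also to bound the invariants of the minimizing valuation $v^{*}$ (equivalently the degrees of generators of the section ring of $Y$, or the multiplicity of $o$), so that the family of all such degenerations is bounded and boundedness descends from the central fibre back to $(Y,\Delta_Y)$. Seen purely in terms of $X$, the content of the theorem is exactly the bound on the anticanonical volume $V$ in terms of $d$ and $\theta$: the $\alpha$--bound by itself is insufficient, and it is the degeneration to a K--semistable cone, combined with Fujita's inequality, that forces $V$ bounded. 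An alternative route, in the spirit of the rest of this paper, would replace Step 3 by boundedness of complements (Theorem \ref{bddcomp}) once $V$ is bounded --- but obtaining the volume bound remains the crux there as well.
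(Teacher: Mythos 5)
The paper does not prove this statement: it is quoted verbatim from \cite[Corollary 6.14]{LLX18} inside a remark and used purely as a black box (it enters only in the final corollary of Section 3, combined with the paper's own complement theorem for log bounded families). So there is no internal proof to compare your argument against; what you have written is a reconstruction of the normalized-volume strategy of the cited source, and in outline (pass to the anticanonical model, form the orbifold cone, apply Liu's estimate relating $\widehat{\mathrm{vol}}$ of the vertex to $\alpha$ and the volume, then invoke boundedness of klt singularities with normalized volume bounded below) it is the right skeleton.

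As a proof, however, there are concrete gaps beyond the one you flag. First, in Step 2 the inequality $\widehat{\mathrm{vol}}(o)\ge c(d)\min(\alpha,1)^{d}V$ does not yield $\widehat{\mathrm{vol}}(o)>\epsilon_{0}(d,\theta)$: when $\alpha>1$ the right-hand side is $c(d)V$, and the hypothesis only gives $V>\theta/\alpha^{d}$, while Koll\'ar's estimate (Lemma \ref{kollar}) bounds the product $\alpha^{d}V$ from above, not $\alpha$ itself. You therefore need a separate argument in the large-$\alpha$ regime (e.g. Tian--Fujita's criterion forcing K-semistability and a direct computation of $\widehat{\mathrm{vol}}$ of the cone), and that again reduces to bounding $V$ from below --- the very thing that is missing. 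Second, Step 3 as written imports finite generation and quasi-monomiality of the minimizer (the stable degeneration package), a much deeper input than \cite{LLX18} actually uses and one that was not available at the time; and even granting it, ``the central fibre is bounded, hence so is $(Y,\Delta_Y)$'' is not a valid implication without additionally bounding the degeneration data (the Reeb field and weights of $v^{*}$, equivalently the Hilbert function of the graded section ring), which your own ``Main obstacle'' paragraph concedes is unproved. Since that is exactly where the uniform bound on $V$ --- the real content of the theorem --- would be extracted, the argument does not close. For the purposes of this paper the correct move is simply to cite \cite{LLX18}; a self-contained proof would have to follow their Theorem 6.13 route in detail.
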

\end{remark}
\medskip

\section{Preliminaries}
We adopt the standard notation and definitions in \cite{KMM} 
and \cite{KM}, and will freely use them.

\subsection{Pairs and singularities}
A {\it sub-pair} $(X, B)$ consists of a normal projective variety $X$ 
and an $\bR$-divisor $B$ on $X$ such that $K_X+B$ is $\bR$-Cartier. $B$ is called the {\it sub-boundary} of this pair.

A {\it log pair} $(X, B)$ is a sub-pair with $B\geq 0$. We call $B$ a {\it boundary} in this case.

Let $f\colon Y\rightarrow X$ be a log
resolution of the log pair $(X, B)$, write
\[
K_Y =f^*(K_X+B)+\sum a_iF_i,
\]
where $F_i$ are distinct prime divisors.  
For a non-negative real number $\epsilon$, the log pair $(X,B)$ is called
\begin{itemize}
\item[(a)] \emph{$\epsilon$-kawamata log terminal} (\emph{$\epsilon$-klt},
for short) if $a_i> -1+\epsilon$ for all $i$;
\item[(b)] \emph{$\epsilon$-log canonical} (\emph{$\epsilon$-lc}, for
short) if $a_i\geq  -1+\epsilon$ for all $i$;
\end{itemize}

Usually we write $X$ instead of $(X,0)$ in the case when $B=0$.
Note that $0$-klt (resp. $0$-lc) is just klt (resp. lc) in the usual sense. Also note that 
$\epsilon$-lc singularities only make sense if $\epsilon\in [0,1]$, and  $\epsilon$-klt 
singularities only make sense if $\epsilon\in [0,1)$.

Similarly, sub-$\epsilon$-klt and sub-$\epsilon$-lc sub-pairs can be defined.

The {\it log discrepancy} of the divisor $F_i$ is defined to be $a(F_i, X, B)=1+a_i$.
It does not depend on the choice of the log resolution $f$.

$F_i$ is called a {\it non-klt place} of $(X, B)$  if $a_i\leq -1$.
A subvariety $V\subset X$ is called a {\it non-klt center} of 
$(X, B)$ if it is the image of a non-klt place. 
The {\it non-klt locus} $\text{Nklt}(X, B)$ is the union of 
all non-klt centers of $(X, B)$.
We recall the Koll\'{a}r-Shokurov connectedness lemma.
\begin{lem}[cf. \cite{Sho93}, \cite{Sho94} and {\cite[Theorem 17.4]{Kol92}}]\label{cnnlem}
Let $(X,B)$ be a log pair, and let $\pi$: $X\rightarrow S$ be a proper morphism with connected fibers. Suppose $-(K_X+B)$ is $\pi$-nef and $\pi$-big. Then $\mathrm{Nklt}(X, B)\cap X_s$ is connected for any fiber $X_s$ of $\pi$.
\end{lem}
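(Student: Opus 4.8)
The proof is the classical vanishing-theorem argument of Shokurov and Kollár, which I would organize as follows. Take a log resolution $f\colon Y\to X$ of $(X,B)$ so that $\mathrm{Exc}(f)\cup\Supp(f^{-1}_*B)$ is a simple normal crossing divisor, and write $K_Y=f^*(K_X+B)+\sum_i a_iF_i$ with the $F_i$ distinct prime divisors. I decompose the correction term according to the sign of $a_i$: set
\[
A\defeq\sum_{a_i>0}\lceil a_i\rceil F_i,\qquad F\defeq -\sum_{a_i\le -1}\lceil a_i\rceil F_i,\qquad D\defeq\sum_i(\lceil a_i\rceil-a_i)F_i .
\]
Since $B\ge 0$, every prime divisor with positive discrepancy is $f$-exceptional, so $A$ is an effective $f$-exceptional divisor; $F$ is effective with integer coefficients $\ge 1$ and its support maps onto $\Nklt(X,B)$; and $D$ is an effective $\bR$-divisor with simple normal crossing support and $\lfloor D\rfloor=0$. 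One computes $\lceil\sum_i a_iF_i\rceil=A-F$, hence the key identity
\[
(A-F)-D-K_Y=f^*\bigl(-(K_X+B)\bigr).
\]

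Put $g\defeq\pi\circ f\colon Y\to S$. As $-(K_X+B)$ is $\pi$-nef and $\pi$-big and $f$ is birational, $f^*(-(K_X+B))$ is $g$-nef and $g$-big; together with $D$ being fractional with simple normal crossing support, the identity above lets one apply the relative Kawamata--Viehweg (Kollár--Nadel) vanishing theorem to get $R^jg_*\OO_Y(A-F)=0$ for all $j>0$. Since $A$ and $F$ share no component, $A|_F$ is an effective Cartier divisor on $F$ and there is a short exact sequence
\[
0\to\OO_Y(A-F)\to\OO_Y(A)\to\OO_F\bigl(A|_F\bigr)\to 0 .
\]
Applying $g_*$ and using the vanishing shows $g_*\OO_Y(A)\twoheadrightarrow g_*\OO_F(A|_F)$. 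Because $A$ is effective and $f$-exceptional, $f_*\OO_Y(A)=\OO_X$, so $g_*\OO_Y(A)=\pi_*\OO_X$; replacing $\pi$ by its Stein factorization (harmless, as $\pi$ has connected fibers, so this does not alter the fibers $X_s$) we may assume $\pi_*\OO_X=\OO_S$, and therefore $\OO_S\twoheadrightarrow g_*\OO_F(A|_F)$.

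It remains to turn this into connectedness. First, $\Nklt(X,B)\cap X_s=f\bigl(\Supp(F)\cap g^{-1}(s)\bigr)$, which is connected once $\Supp(F)\cap g^{-1}(s)$ is, $f$ being continuous; so it suffices to show the latter is connected for each $s\in S$. If $\Supp(F)\cap g^{-1}(s)$ were a disjoint union $Z'\sqcup Z''$ of two non-empty closed sets, then, using properness of $g|_F$, after shrinking $S$ around $s$ we could write $F=F'\sqcup F''$ as a disjoint union of effective divisors with $s\in g(F')\cap g(F'')$; this would make $g_*\OO_F(A|_F)=g_*\OO_{F'}(A|_{F'})\oplus g_*\OO_{F''}(A|_{F''})$ a direct sum of two $\OO_S$-modules each of which is non-zero at $s$, so $g_*\OO_F(A|_F)\otimes\kappa(s)$ would have dimension $\ge 2$, contradicting that it is a quotient of $\OO_S\otimes\kappa(s)=\kappa(s)$. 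I expect this final passage --- deducing honest topological connectedness of the fibre from the sheaf surjection $\OO_S\twoheadrightarrow g_*\OO_F(A|_F)$, with the relative-properness bookkeeping needed to shrink $S$ and split $F$ --- to be the step demanding the most care; the rest is the standard ``decompose on a log resolution and vanish'' mechanism.
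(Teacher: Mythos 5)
The paper does not actually prove Lemma~\ref{cnnlem}; it is stated as a known result with references to Shokurov and to Theorem~17.4 of Koll\'ar's \emph{Flips and Abundance}, so there is no internal proof to compare against. Your argument is precisely the standard vanishing-theorem proof found in those sources, and it is correct: the decomposition $A-F-D-K_Y=f^*(-(K_X+B))$ with $A$ effective $f$-exceptional, $F$ integral with support over $\Nklt(X,B)$, and $D$ fractional snc; the relative Kawamata--Viehweg vanishing giving $R^1g_*\OO_Y(A-F)=0$; the identification $g_*\OO_Y(A)=\pi_*\OO_X$ and reduction to $\pi_*\OO_X=\OO_S$ by Stein factorization; and the resulting surjection $\OO_S\twoheadrightarrow g_*\OO_F(A|_F)$ are all as they should be. The final connectedness step, which you rightly flag as needing care, does work, though you should make two small points explicit: (i) properness of $g|_F$ is what lets a disconnection $\Supp F\cap g^{-1}(s)=Z'\sqcup Z''$ spread out to a decomposition $F=F'\sqcup F''$ over a neighbourhood of $s$ (separate $Z'$, $Z''$ by disjoint opens $U',U''$ in $F$ and remove $g(F\setminus(U'\cup U''))$, a closed set avoiding $s$, from $S$); and (ii) nonvanishing of $g_*\OO_{F'}(A|_{F'})\otimes\kappa(s)$ follows from nonvanishing of the stalk at $s$ by Nakayama, the stalk being nonzero because $\OO_{F'}\hookrightarrow\OO_{F'}(A|_{F'})$ (as $A|_{F'}\ge 0$) and $F'_s\ne\emptyset$ forces $H^0(F'\cap g^{-1}(V),\OO_{F'}(A|_{F'}))\ne 0$ for every open $V\ni s$. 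With those two observations spelled out, the proof is complete.
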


\subsection{Fano pairs and Calabi--Yau pairs}
A projective pair $(X,B)$ is a {\it Fano} (resp. {\it weak Fano}, resp. {\it Calabi--Yau}) pair if it is lc and $-(K_X+B)$ is ample (resp. $-(K_X+B)$ is nef and big, resp. $K_X+B\equiv 0$).
A projective variety $X$ is called Fano, (resp. Calabi--Yau) if $(X,0)$ is Fano (resp. Calabi--Yau). It is called {\it $\mathbb{Q}$-Fano} if it is klt and Fano. It is called {\it of Fano type} if $(X,B)$ is klt weak Fano for some boundary $B$.

\subsection{Bounded pairs}\label{sec.bdd}
A collection of varieties $ \mathcal{D}$ is
said to be \emph{bounded} (resp. 
\emph{birationally bounded})
if there exists 
$h\colon \mathcal{Z}\rightarrow S$ a projective morphism 
of schemes of finite type such that
each $X\in \mathcal{D}$ is isomorphic (resp. birational) to $\mathcal{Z}_s$ 
for some closed point $s\in S$.

A couple $(X,D)$ consists of a normal projective variety $X$ and a reduced divisor $D$ on X. Note that we do not require $K_X+D$ to be $\bQ$-Cartier here.

We say that a collection of couples $\mathcal{D}$ is 
{\it log birationally bounded} (resp.  \emph{log bounded})
if there is a  quasi-projective scheme $\mathcal{Z}$, a 
reduced divisor $\mathcal{E}$ on $\mathcal Z$, and a 
projective morphism $h\colon \mathcal{Z}\to S$, where 
$S$ is of finite type and $\mathcal{E}$ does not contain 
any fiber, such that for every $(X,D)\in \mathcal{D}$, 
there is a closed point $s \in S$ and a birational (resp. isomorphic)
map $f \colon \mathcal{Z}_s \dashrightarrow X$ such that $\mathcal{E}_s$ contains the support of $f_*^{-1}D$ 
and any $f$-exceptional divisor.

A set of log pairs $\mathcal{P}$ is 
{\it log birationally bounded} (resp.  \emph{log bounded})
if the set of the corresponding couples $\{(X,\mathrm{Supp}B)|(X,B)\in\mathcal{P}\}$ is log birationally bounded (resp. log bounded).

\subsection{Volumes}
Let $X$ be a $d$-dimensional normal projective variety  and $D$ 
a Cartier divisor on $X$. The {\it volume} of $D$ is the real number
\[
\mathrm{vol}(X, D)=\limsup_{m\rightarrow \infty}\frac{h^0(X,\OO_X(mD))}{m^d/d!}.
\]
For more backgrounds on the volume, see \cite[2.2.C]{Positivity1}. 
By the homogenous property and  continuity of the volume, we 
can extend the definition to $\bR$-Cartier $\bR$-divisors. 
Moreover, if $D$ is a nef $\bR$-divisor, then vol$(X, D)=D^d$.

\subsection{Complements}
\begin{definition}
Let $(X,B)$ be a pair and $n$ a positive integer. We write $B=\floor{B}+\{B\}$. An {\it n-complement} of $K_X+B$ is a divisor of the form $K_X+B^+$ such that 
\begin{enumerate}
\item $(X,B^+)$ is lc,
\item $n(K_X+B^+)\sim 0$, and
\item $nB^+\geq n\floor{B}+\floor{(n+1)\{B\}}$.
\end{enumerate}
If moreover, $B^+\geq B$ (resp. $(X,B^+)$ is klt, resp. $(X,B^+)$ is $\epsilon$-lc, where $\epsilon>0$), we say that the complement is {\it strong} (resp. klt, resp. $\epsilon$-lc).

We say that $(X,B)$ has an {\it$\bR$-complement} or is {\it complementary} if there exists $\overline{B}\geq B$ such that $(X,\overline{B})$ is lc.
In this case, we call $K_X+\overline{B}$ an $\bR$-complement of $K_X+B$.

\end{definition}
\begin{definition}
For a subset $\mathscr{R}$ of $[0,1]$, we define the set of {\it hyperstandard multiplicities} associated to $\mathscr{R}$ to be
\[\Phi(\mathscr{R})=\{ 1-\frac{r}{m}| r\in\mathscr{R},\ m\in\mathbb{N} \}.\]
\end{definition}
Note that the only possible accumulating point of $\Phi(\mathscr{R})$  is $1$ if $\mathscr{R}$ is finite.
Birkar shows the following boundedness of complements.
\begin{thm}[{\cite [Theorem 1.7]{Bir16a}}]\label{bddcomp}
Fix a positive integer $d$ and a finite set $\mathscr{R}$ of rational numbers in $[0,1]$. Then there exists a positive integer $n$ depending only on $d$ and $\mathscr{R}$, such that if  $(X,B)$ is a projective pair with
\begin{enumerate}
\item $(X,B)$ is lc dimension $d$,
\item the coefficients of $B\in\Phi(\mathscr{R})$,
\item $X$ is of Fano type, and
\item $-(K_X+B)$ is nef,
\end{enumerate}
then there is an $n$-complement $K_X+B^+$ of $K_X+B$ such  that $B^+\geq B$.
\end{thm}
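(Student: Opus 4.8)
\emph{Plan of proof.} The plan is to follow Birkar's argument, proving the statement by induction on $d$ and running it \emph{simultaneously} with two companion statements (each also formulated for generalized polarized pairs): \emph{effective birationality} --- for an $\epsilon$-lc pair $(X,B)$ of Fano type of dimension $d$ with $-(K_X+B)$ big there is $m=m(d,\epsilon)$ so that $|{-m}(K_X+B)|$ defines a birational map --- and \emph{boundedness of exceptional pairs}, where $(X,B)$ is called \emph{exceptional} if $(X,B+G)$ is klt for every $0\le G\sim_\bR-(K_X+B)$. So I would fix the inductive hypothesis that this whole package holds in dimension $<d$ and establish it in dimension $d$. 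First I would reduce to the big case. Since $X$ is of Fano type, $-(K_X+B)$ is semiample, giving a contraction $g\colon X\to Z$. If $g$ is birational then $K_X+B=g^*(K_Z+B_Z)$ with $(Z,B_Z)$ lc of Fano type, $B_Z$ again with coefficients in $\Phi(\mathscr R)$ and $-(K_Z+B_Z)$ ample, and complements pull back along $g$; so I may assume $-(K_X+B)$ is ample. If $\dim Z<d$, I would apply the canonical bundle formula along $g$ to produce a generalized pair $(Z,B_Z+M_Z)$ of Fano type of dimension $<d$, using the global ACC and the bounded denominators of the moduli $b$-divisor to keep the coefficients of $B_Z$ in a finite hyperstandard set depending only on $d,\mathscr R$, and conclude by the inductive package on $Z$ and pullback. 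So henceforth $-(K_X+B)$ is ample (in particular big and nef), and I split into two cases.

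\emph{Non-exceptional case.} Suppose some $0\le G\sim_\bR-(K_X+B)$ makes $(X,B+G)$ not klt. By tie-breaking (scale $G$ maximally and add a tiny ample perturbation) I would produce $0\le\Delta$ with $(X,\Delta)$ lc, $-(K_X+\Delta)$ nef and big, and with a unique non-klt center $S$ (whose ambient non-klt locus is connected by Lemma~\ref{cnnlem}); passing to a suitable model I may take $S$ to be a divisor and perform Birkar's divisorial (or fiber-type) adjunction, obtaining a generalized pair $(S,B_S+M_S)$ of dimension $<d$, of Fano type, whose boundary coefficients lie in a finite hyperstandard set $\Phi(\mathscr S)$, $\mathscr S=\mathscr S(d,\mathscr R)$. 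By induction $K_S+B_S+M_S$ admits an $n_S$-complement with $n_S=n_S(d,\mathscr R)$, and I would lift it: since $-(K_X+\Delta)$ is nef and big with non-klt locus exactly $S$, a twisted Kawamata--Viehweg vanishing for the ideal sheaf of $S$ makes the restriction to $S$ of global sections of $\OO_X(-n(K_X+B))$ surjective for the relevant $n$, so a divisor realizing the complement on $S$ extends to an $n$-complement $K_X+B^+$ of $K_X+B$ with $n$ a controlled multiple of $n_S$; bookkeeping with $\lfloor(n+1)\{B\}\rfloor$ and the structure of $\Phi(\mathscr R)$ then gives $B^+\ge B$.

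\emph{Exceptional case.} Now every $0\le G\sim_\bR-(K_X+B)$ has $(X,B+G)$ klt. Using effective birationality in dimension $d$ (obtained inside the same induction by playing non-klt centers against volume and using bounded complements in lower dimension), there is $m=m(d,\mathscr R)$ with $|{-m}(K_X+B)|$ birational; were $(-(K_X+B))^d$ large one could tie-break a member of a suitable multiple of this system to make $(X,B+G')$ non-klt at a general point, contradicting exceptionality, so $(-(K_X+B))^d$ is bounded above, and a parallel argument bounds the log discrepancies uniformly away from $0$, i.e.\ $(X,B)$ is $\epsilon$-lc for some $\epsilon=\epsilon(d)$. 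By Theorem~\ref{bab} the varieties $X$, hence the couples $(X,\Supp B)$, form a bounded family, and on such a family I would produce a uniform complement directly: exceptionality yields a general $0\le G\sim_\bQ-(K_X+B)$ with $(X,B+G)$ klt, bounded Cartier indices on the family together with the rationality forced by $\mathscr R$ give $n=n(d,\mathscr R)$ with $n(K_X+B+G)\sim0$, and $K_X+B+G$ is the desired strong klt $n$-complement (non-vanishing of the relevant systems over the family being handled by a spreading-out/constructibility argument). Taking $n$ a common multiple of the bounds from the two cases proves the theorem in dimension $d$.

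\emph{Expected main obstacle.} The real difficulty is not any single step but closing the induction without circularity: effective birationality, the $\epsilon(d)$-lc property of exceptional pairs, and bounded complements must all be proved together and in the generalized pair setting, so that the reductions and the two cases above may legitimately invoke them in dimension $<d$. The hardest technical point inside the non-exceptional case is that adjunction onto a non-klt center keeps the boundary coefficients inside a \emph{finite} hyperstandard set (this is where the global ACC for lc thresholds is essential), after which the complement can be lifted by vanishing; it is precisely at this adjunction step, and at the canonical bundle formula in the reduction, that the hypothesis $B\in\Phi(\mathscr R)$ is indispensable.
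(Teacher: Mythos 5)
This theorem is not proved in the paper: it is quoted verbatim from Birkar's \emph{Anti-pluricanonical systems on Fano varieties} [Bir16a, Theorem 1.7], and your outline is an accurate roadmap of Birkar's own argument there (the simultaneous induction on $d$ with effective birationality and boundedness of exceptional pairs in the generalized-pair setting, the reduction to the ample case via the contraction and the canonical bundle formula, and the non-exceptional/exceptional dichotomy with adjunction to a non-klt center followed by lifting via vanishing). Since it matches the cited source's approach, there is nothing to flag beyond the obvious caveat that what you have written is a plan, not a proof: each deferred step (keeping adjunction coefficients in a finite hyperstandard set via global ACC, the lifting of complements from the non-klt center, and the boundedness of exceptional pairs) is itself a major theorem occupying a large part of Birkar's paper.
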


Based on Theorem \ref{bddcomp} and ACC for log canonical threshold [{\cite [Theorem 1.1]{HMX14}}],  Filipazzi and Moraga showed the following existence of bounded complements for DCC coefficients of the boundaries.
\begin{thm}[{\cite [Theorem 1.2]{FM18}}]\label{FM}
Fix a positive integer $d$ and a closed DCC set $\mathscr{S}$ of rational numbers in $[0,1]$. Then there exists a positive integer $n$ depending only on $d$ and $\mathscr{S}$, such that if  $(X,B)$ is a projective pair with
\begin{enumerate}
\item $(X,B)$ is lc dimension $d$,
\item the coefficients of $B\in\mathscr{S}$,
\item $X$ is of Fano type, and
\item $-(K_X+B)$ is nef,
\end{enumerate}
then there is an $n$-complement $K_X+B^+$ of $K_X+B$ such  that $B^+\geq B$.
\end{thm}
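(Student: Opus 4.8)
The plan is to bootstrap from Theorem \ref{bddcomp}, which already yields a bounded complement when the boundary coefficients lie in a hyperstandard set $\Phi(\mathscr{R})$ with $\mathscr{R}$ finite, to general DCC coefficients, using the ACC for log canonical thresholds (\cite{HMX14}) as the bridge. Two harmless reductions come first: replace $\mathscr{S}$ by its closure, still a DCC set of rationals in $[0,1]$; and exploit that $X$ is of Fano type --- hence a Mori dream space --- so that a suitable minimal model program on $X$ is available at any point, terminating, preserving the Fano type of $X$ and the nonzero boundary coefficients, and allowing $n$-complements to be transported between $X$ and the resulting model, as in \cite{Bir16a}.

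The substance is to replace $B$ by a boundary $\Delta$ with coefficients in a single fixed finite hyperstandard set $\Phi(\mathscr{R})$, $\mathscr{R}=\mathscr{R}(d,\mathscr{S})$, with $B\le\Delta\le B^\dagger$, where $K_X+B^\dagger$ is an $\mathbb{R}$-complement of $K_X+B$ (one exists because $X$ is of Fano type and $-(K_X+B)$ is nef). Then $(X,\Delta)$ is log canonical, being dominated by the log canonical pair $(X,B^\dagger)$; $X$ is still of Fano type; and $-(K_X+\Delta)\sim_{\mathbb{R}}B^\dagger-\Delta\ge 0$ is pseudoeffective, so after a minimal model program on $X$ we reach $(X',\Delta')$ with $-(K_{X'}+\Delta')$ nef, and Theorem \ref{bddcomp} furnishes a strong $n$-complement with $n=n(d,\mathscr{R})$. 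Pulling this back to $(X,\Delta)$ and then observing that $\Delta\ge B$, one gets an $n$-complement $K_X+\Delta^+$ of $K_X+B$: the pair $(X,\Delta^+)$ and the relation $n(K_X+\Delta^+)\sim 0$ are untouched, and the inequality $n\Delta^+\ge n\lfloor B\rfloor+\lfloor(n+1)\{B\}\rfloor$ follows coefficient by coefficient from $\Delta^+\ge B$ together with the fact that $K_X+\Delta^+$ is a strong complement of $K_X+\Delta$. The construction of $\Delta$ is where the ACC for log canonical thresholds is used, through uniform rational polytopes: for fixed $X$ and fixed prime components, the coefficient vectors giving a log canonical pair form a rational polytope, and the ACC makes these polytopes uniform, producing a single rational polytope $P=P(d,\mathscr{S})$ of bounded complexity that contains all coefficient vectors occurring in the family and on which all the pairs stay log canonical; one then moves the coefficient vector of $B$ upward inside $P$, staying below $B^\dagger$ and landing at a point of bounded denominator with hyperstandard coordinates.

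The step I expect to be the genuine obstacle is this construction of $\Delta$, together with the bookkeeping around the complement inequality. Simply rounding the DCC coefficients to nearby hyperstandard ones is impossible --- $\Phi(\mathscr{R})$ accumulates only at $1$ and is sparse away from it, so a coefficient bounded away from $1$ has no nearby hyperstandard neighbour --- which is exactly why the detour through the uniform polytope $P$ (and hence the ACC for log canonical thresholds) cannot be avoided; moreover the window $[B,B^\dagger]$ may be too narrow at some components, so one must also enlarge the support by adding suitable anti-log-canonical divisors before a hyperstandard $\Delta$ fits into it, and all of this has to be done uniformly over the family. On top of this there is a real circularity: Theorem \ref{bddcomp} produces the integer $n$ only after $\mathscr{R}$ is fixed, while making the complement inequality hold requires $\mathscr{R}$ fine relative to $n$, and $n$ grows with $\mathscr{R}$. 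I would break it by a finite bootstrap --- the ACC for log canonical thresholds gives a lower bound, independent of $n$, for the room available inside $P$ around each occurring coefficient, so one fixes a coarse $\mathscr{R}_0$, reads off $n_0$ from Theorem \ref{bddcomp}, adjoins the finitely many extra rationals of denominator at most $n_0+1$ that one needs, and verifies that the complement number stabilizes after finitely many rounds; this bootstrap, made uniform by \cite{HMX14}, is the technical heart of \cite{FM18}. Finally, Theorem \ref{FM} asserts only the existence of the bounded complement and claims no boundedness of the family --- that stronger conclusion is precisely what makes the lower bounds on $\alpha(X,B)$ and on $(-(K_X+B))^d$ in Theorem \ref{main} and Theorem \ref{2} indispensable --- so nothing more is required here.
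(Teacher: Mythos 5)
Note first that the paper does not prove this statement: it is cited from Filipazzi--Moraga \cite{FM18}, and the only guidance the paper gives is the sentence preceding it, namely that the result is ``based on Theorem \ref{bddcomp} and ACC for log canonical thresholds.'' Your sketch is consistent with that remark at the level of ingredients, and the reductions you lay out --- pass to the closure of $\mathscr{S}$, raise $B$ to a $\Delta$ with coefficients in a fixed finite hyperstandard set $\Phi(\mathscr{R})$ satisfying $B\le\Delta\le B^{\dagger}$, run an MMP in the Fano-type setting, apply Theorem \ref{bddcomp} to $(X',\Delta')$, descend the complement by the analogue of Lemma \ref{qfact}, and then transfer the $n$-complement condition from $\Delta$ to $B$ --- are all sound.

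However, the ``real circularity'' you go on to describe is spurious, and in fact your own argument two sentences earlier already dissolves it. The function $x\mapsto n\lfloor x\rfloor+\lfloor(n+1)\{x\}\rfloor$ is non-decreasing on $[0,\infty)$, so once $\Delta\ge B$ and $K_X+\Delta^+$ is a strong $n$-complement of $K_X+\Delta$, the inequality $n\Delta^+\ge n\lfloor B\rfloor+\lfloor(n+1)\{B\}\rfloor$ holds coefficient by coefficient for \emph{any} $n$, with no fineness requirement on $\mathscr{R}$ relative to $n$; the bootstrap you propose to break the circularity is therefore unnecessary. The genuine technical content is exactly the step you first identify: constructing, uniformly over the family, a $\Delta$ with coefficients in a single fixed $\Phi(\mathscr{R})$ inside the possibly narrow window $[B,B^{\dagger}]$, keeping $(X,\Delta)$ log canonical and $-(K_X+\Delta)$ pseudoeffective. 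That is where ACC for log canonical thresholds \cite{HMX14} and the uniform rational polytope argument carry the weight in \cite{FM18}, and nothing in this paper reproduces that argument.
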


\begin{thm}[{\cite[Theorem 5.22]{HLS}}]\label{ACCPELC}
Let $d$ be a natural number and $\Gamma\subset [0,1]$ be a closed DCC set. Then there exists a finite subset  $\Gamma_1$ of $\Gamma$ and a projection $g:\Gamma\rightarrow \Gamma_1$ such that $g(g(x))=g(x)$, $g(x)\geq x$ and $g(y)\geq g(x)$ for every $x\leq y\in \Gamma$, depending only on $d$ and $\Gamma$ satisfying the following. Suppose $X,B\defeq \sum b_i B_i$ is lc of dimension $d$ where $b_i\in\Gamma$, $B_i\geq 0$ is $\bQ$-Cartier Weil divisor for any $i$, $(X,B)$ has an $\bR$-complement and $X$ is of Fano type, then
\begin{enumerate}
\item $(X,\sum g(b_i)B_i)$ is lc, and
\item $-(K_X+\sum g(b_i)B_i)$ is pseudo-effective.
\end{enumerate}
\end{thm}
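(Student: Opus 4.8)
The plan is to deduce both conclusions from two ACC‑type statements plus a reduction to perturbing one coefficient at a time; I may assume the $B_i$ are prime and distinct. Let $T_1$ be the closure of the set of all log canonical thresholds $\lct(Y,\Delta;M)$ with $\dim Y=d$, $(Y,\Delta)$ lc, $\operatorname{coeff}(\Delta)\subseteq\Gamma\cup\{0\}$ and $M\neq 0$ a prime divisor (such thresholds are automatically $\le 1$); and let $T_2$ be the closure of the set of all pseudo‑effective thresholds $\sup\{t\ge 0:-(K_Y+\Delta+tM)\text{ is pseudo-effective}\}$ with, in addition, $Y$ of Fano type and $-(K_Y+\Delta)$ pseudo-effective. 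By the ACC for log canonical thresholds ({\cite[Theorem 1.1]{HMX14}}) and by the ACC for pseudo-effective thresholds (which follows from the same circle of ideas together with the MMP; alternatively one runs the whole proof by induction on $d$, reducing a pseudo-effective threshold on a Mori fibre space to a log canonical threshold and to the global ACC in lower dimension), both $T_1$ and $T_2$ satisfy the ACC, and they depend only on $d$ and $\Gamma$. Since $\Gamma$ is a DCC set, $S:=(T_1\cup T_2)\cap[0,1]\cap\Gamma$ is simultaneously ACC and DCC, hence finite.

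Next I would construct $\Gamma_1$ and $g$. For $b\in\Gamma$ set $\rho(b):=\min\bigl(\{\max\Gamma\}\cup(T_1\cup T_2)\cap[b,\infty)\bigr)$, which is well defined because $T_1\cup T_2$ is closed; then $b\le\rho(b)\le\max\Gamma$, with $\rho(b)=b$ precisely when $b\in S$. Confronting the closedness and the descending chain condition of $\Gamma$ with the ascending chain condition of $(T_1\cup T_2)\cap[0,1]$, one checks that $\rho(b)-b$ is bounded below by a constant $\delta=\delta(d,\Gamma)>0$ as long as $b$ stays away from the finite set $S$; a compactness and covering argument then produces a finite subset $\Gamma_1\subseteq\Gamma$ with $S\cup\{\max\Gamma\}\subseteq\Gamma_1$ and $\Gamma_1\cap[b,\rho(b)]\neq\emptyset$ for every $b\in\Gamma$. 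Define $g(b):=\min(\Gamma_1\cap[b,\infty))$. Then $g(b)\ge b$, $g$ is non-decreasing (so $g(y)\ge g(x)$ for $x\le y$), $g$ fixes $\Gamma_1$ pointwise (so $g\circ g=g$), and, crucially, $g(b)\le\rho(b)$ for every $b$.

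Now, given $(X,B=\sum_i b_iB_i)$ as in the statement, I would modify the coefficients one at a time; note that having an $\bR$-complement forces $-(K_X+B)$ to be pseudo-effective (it is $\bR$-linearly, equivalently numerically, equivalent to an effective divisor), and this is all we use about the complement. Suppose $b_1,\dots,b_{k-1}$ have already been replaced by $g(b_1),\dots,g(b_{k-1})$, giving a pair $(X,B^{(k-1)})$ that is again lc of dimension $d$, with $X$ of Fano type, $-(K_X+B^{(k-1)})$ pseudo-effective, and all coefficients in $\Gamma$ (since $g(b_j)\in\Gamma_1\subseteq\Gamma$). As $(X,B^{(k-1)})$ is lc, $\lct\bigl(X,\,B^{(k-1)}-b_kB_k;\,B_k\bigr)\ge b_k$, and this number lies in $T_1$, hence is $\ge\rho(b_k)\ge g(b_k)$; since log canonicity is a closed condition, $(X,B^{(k)}):=\bigl(X,\,B^{(k-1)}+(g(b_k)-b_k)B_k\bigr)$ is lc. Likewise $\sup\{t\ge 0:-(K_X+B^{(k-1)}-b_kB_k+tB_k)\text{ is pseudo-effective}\}\ge b_k$ lies in $T_2$, hence is $\ge\rho(b_k)\ge g(b_k)$ (if this threshold exceeds $1$ the inequality is clear since $g(b_k)\le\max\Gamma\le 1$); since pseudo-effectivity is a closed condition, $-(K_X+B^{(k)})$ is pseudo-effective. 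After the finitely many steps we reach $(X,\sum_i g(b_i)B_i)$, which is lc with $-(K_X+\sum_i g(b_i)B_i)$ pseudo-effective.

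The main obstacle is the construction of the finite set $\Gamma_1$ with the covering property: one must extract the uniform gap $\delta$ by playing the closedness and the DCC of $\Gamma$ against the ACC of $T_1\cup T_2$, deal separately with the finitely many points of $S$ (near those no uniform gap exists, so they are forced into $\Gamma_1$), and only then can a compactness argument conclude. A second, more structural, ingredient is the ACC for pseudo-effective thresholds that defines $T_2$; if one does not quote it, it must be established here, by running a $(K+\Delta)$-MMP on the Fano-type variety, reducing to a Mori fibre space, and invoking the ACC for log canonical thresholds together with the global ACC ({\cite[Theorem 1.5]{HMX14}}) in lower dimension --- so that in the end the argument is by induction on $d$.
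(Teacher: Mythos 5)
The paper does not prove this statement at all --- it is quoted verbatim from {\cite[Theorem 5.22]{HLS}} --- so the only comparison available is with that source. Your overall strategy (play the DCC of $\Gamma$ against the ACC of a set of thresholds to extract a finite $\Gamma_1$ and the projection $g$, then raise one coefficient at a time using that the relevant log canonical threshold and pseudo-effective threshold are $\geq b_k$ and lie in ACC sets) is the right shape and is in the spirit of the argument in \cite{HLS}. The logic of the final step is sound: any threshold $t\in(T_1\cup T_2)\cap[b_k,\infty)$ satisfies $t\geq\rho(b_k)\geq g(b_k)$ once the covering property $\Gamma_1\cap[b_k,\rho(b_k)]\neq\emptyset$ is in place.

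There are, however, three gaps. (1) ``I may assume the $B_i$ are prime and distinct'' is not a legitimate reduction. The statement deliberately takes the $B_i$ to be effective $\bQ$-Cartier \emph{Weil} divisors because on a non-$\bQ$-factorial $X$ their prime components need not be $\bQ$-Cartier; moreover passing to the prime decomposition changes the coefficients (they become multiples and sums of elements of $\Gamma$) and changes the meaning of $\sum g(b_i)B_i$. The repair is to define $T_1$ and $T_2$ with $M$ an arbitrary nonzero effective integral $\bQ$-Cartier Weil divisor and with the boundary's prime-decomposition coefficients in the derived DCC set of bounded sums of elements of $\Gamma\cup\Gamma_1$; {\cite[Theorem 1.1]{HMX14}} still applies in that generality. (2) The ACC for pseudo-effective thresholds underlying $T_2$ is the substantive external input; it is a genuine theorem (established in \cite{HLS} by essentially the MMP/Mori-fibre-space induction you sketch), so citing it is acceptable, but as written $T_2$ is unsupported. (3) The finiteness of $\Gamma_1$ is true but your ``uniform gap $\delta$ plus compactness'' is not an argument; a clean substitute is to set $g(b)=\max\bigl(\Gamma\cap[b,\rho(b)]\bigr)$ (well defined since $\Gamma$ is closed), check that $g$ is monotone and idempotent, and note that an infinite image would contain a strictly increasing sequence $\gamma_n=g(b_n)$ in $\Gamma$ for which idempotence forces $\gamma_n\leq\rho(\gamma_n)<\gamma_{n+1}$, producing a strictly increasing sequence in $T_1\cup T_2\cup\{\max\Gamma\}$ and contradicting ACC. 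With these repairs your proof goes through.
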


We recall the construction of $(n,\Gamma_1,\Gamma_2)$-complement by Han, Liu and Shokurov.
\begin{thm}[{\cite[Theorem 1.13]{HLS}}]\label{HLS}
Let $d$ be a natural number $\delta>0$ be a positive real number and $\Gamma\subset [0,1]$ be a closed DCC set. Then there exist integers $n>0$ and $r>0$, finite sets $\Gamma_1\subset [0,1]$,
$\{ a_i\}_{i=1}^r\in (0,1]$,
$\Gamma_2=\cup_{i=1}^r \Gamma'_i \subset\bQ\cap [0,1]$, a projection $g:\Gamma\rightarrow \Gamma_1$ and bijections $g_i:\Gamma_1\rightarrow\Gamma'_i$ for every $1\leq i\leq r$ which only depend only on $d$ and $\Gamma$ satisfying the following. 

$\Gamma_1$ and $g$ are given by Theorem \ref{ACCPELC}. $g(x)=\sum_{i=1}^r a_i (g_i\circ g(x))$ and $\sum_{i=1}^r a_i=1$. $|g_i(x)-x|<\delta$ for every $1\leq i\leq r$ and $x\in \Gamma_1$.
Assume $X$ is a normal variety and
$B$ is an effective divisor on $X$ with the following conditions:
\begin{enumerate}
\item X is of Fano type,
\item $\dim X\leq d$, 
\item the coefficients of $B\in \Gamma$,
\item $(X,B)$ has an $\bR$-complement, and
\item we can write $B\defeq \sum b_j B_j$ where $b_j\in\Gamma$ and $B_j\geq 0$ is a $\bQ$-Cartier Weil divisor for any $j$,
\end{enumerate}
then $(X,\sum g(b_j)B_j)$ and $(X,\sum_j g_i\circ g(b_j)B_j)$ are lc for every and $K_X+\sum_j g_i\circ g(b_j)B_j$ has a strong $n$-complement for every $1\leq i\leq r$. In particular, $(X,\sum g(b_j)B_j)$ has an $\bR$-complement.
\end{thm}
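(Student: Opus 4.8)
The plan is to separate Theorem~\ref{HLS} into an elementary approximation step, which manufactures $r$, the weights $\{a_i\}$, the maps $g_i$ and the finite set $\Gamma_2$, and a geometric step, which shows that the coefficient perturbation induced by the $g_i$ preserves both log canonicity and $\bR$-complementarity uniformly, so that one finishes by clearing denominators and quoting boundedness of complements. First I would take $\Gamma_1$ and $g\colon\Gamma\to\Gamma_1$ from Theorem~\ref{ACCPELC}; these depend only on $d$ and $\Gamma$. Any $(X,B)$ as in (1)--(5) is automatically lc, since an $\bR$-complement $K_X+\overline B$ has $\overline B\ge B$ with $(X,\overline B)$ lc, whence $(X,B)$ is lc; so Theorem~\ref{ACCPELC} applies to $B=\sum_j b_jB_j$ and gives that $B'\defeq\sum_j g(b_j)B_j$ has $(X,B')$ lc and $-(K_X+B')$ pseudo-effective, with all coefficients $g(b_j)\in\Gamma_1$. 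It then remains to perturb the finite set $\Gamma_1=\{y_1<\dots<y_s\}\subset[0,1]$ to nearby rationals.

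\emph{The approximation step.} Fix a threshold $\delta_0=\delta_0(d,\Gamma)>0$, to be pinned down below, and set $\delta'=\min\{\delta,\delta_0,\tfrac12\min_t(y_{t+1}-y_t)\}$. I would choose rationals $l_t<y_t<u_t$ with $|l_t-y_t|,|u_t-y_t|<\delta'$ (and $l_t=u_t=y_t$ if $y_t\in\{0,1\}$) arranged so that $l_1<u_1<l_2<\dots<l_s<u_s$. The box $P=\prod_t[l_t,u_t]\subset[0,1]^s$ contains $y=(y_1,\dots,y_s)$, hence $y=\sum_{i=1}^r a_iw^{(i)}$ is a convex combination of the rational vertices $w^{(i)}$ of $P$; discarding vertices of zero weight gives $a_i\in(0,1]$ with $\sum_ia_i=1$. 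Putting $g_i(y_t)\defeq w^{(i)}_t$ yields strictly increasing, hence injective, maps $g_i\colon\Gamma_1\to\Gamma'_i\defeq g_i(\Gamma_1)\subset\bQ\cap[0,1]$ with $|g_i(x)-x|<\delta'\le\delta$ and $x=\sum_ia_ig_i(x)$ for all $x\in\Gamma_1$; set $\Gamma_2=\bigcup_i\Gamma'_i$. Since $g(g(x))=g(x)$ this gives $g(x)=\sum_ia_i\,g_i(g(x))$, and everything so far depends only on $d$, $\Gamma$ and $\delta$.

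\emph{The geometric core --- and the main obstacle.} Put $B^{(i)}\defeq\sum_j g_i(g(b_j))B_j$, so $B'=\sum_ia_iB^{(i)}$ and the coefficients of $B^{(i)}$ lie within $\delta'\le\delta_0$ of those of $B'$. The heart of the matter is to choose $\delta_0=\delta_0(d,\Gamma)$ so that this bounded perturbation preserves, uniformly in $X$, the two properties enjoyed by $B'$: for every normal $X$ of Fano type with $\dim X\le d$ and all $\bQ$-Cartier Weil $B_j\ge0$, if $(X,\sum c_jB_j)$ is lc with an $\bR$-complement, $c_j\in\Gamma_1$ and $|c'_j-c_j|<\delta_0$ with $c'_j\in[0,1]$, then $(X,\sum c'_jB_j)$ is again lc with an $\bR$-complement. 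This is the uniform rational polytope phenomenon for complementary boundaries: the locus of coefficient vectors for which a pair on a Fano type variety of dimension $\le d$ is lc and $\bR$-complementary is cut out, uniformly over such $X$, by finitely many rational inequalities, so it contains a ball of fixed radius around every one of its points; establishing this --- via ACC for log canonical thresholds \cite{HMX14}, boundedness of complements (Theorems~\ref{bddcomp} and \ref{FM}), and termination of the MMP on Fano type varieties --- is the genuinely hard part and the technical core of \cite{HLS}. Here I would also use that, on a Fano type $X$, log canonicity together with pseudo-effectivity of $-(K_X+\Delta)$ is equivalent to the existence of an $\bR$-complement. Granting all this, each $(X,B^{(i)})$ is lc and $\bR$-complementary.

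\emph{Conclusion.} The coefficients of $B^{(i)}$ lie in the fixed finite rational set $\Gamma'_i$, and $\Gamma'_i\subset\Phi(\mathscr{R}_i)$ for $\mathscr{R}_i\defeq\{\,1-c\mid c\in\Gamma'_i\,\}\subset\bQ\cap[0,1]$. After the standard reduction to the case where $-(K_X+B^{(i)})$ is nef (as in the proof of Theorem~\ref{bddcomp}, using that $X$ is of Fano type and $-(K_X+B^{(i)})$ is pseudo-effective), Theorem~\ref{bddcomp} produces a strong $n_i$-complement of $K_X+B^{(i)}$, which then descends to a strong $n_i$-complement on the original $X$ because the reduction is crepant for a $\bQ$-trivial complement. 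Taking $n$ to be a common multiple of $n_1,\dots,n_r$, depending only on $d$, $\Gamma$ and $\delta$, serves all $i$ at once. Finally, if $K_X+B^{(i),+}$ is such a strong $n$-complement, then $K_X+\sum_ia_iB^{(i),+}$ is an $\bR$-complement of $K_X+B'=K_X+\sum_ia_iB^{(i)}$: it is $\sim_\bR 0$ because each $K_X+B^{(i),+}\sim_\bQ 0$, it is lc by convexity of log canonicity, and $\sum_ia_iB^{(i),+}\ge\sum_ia_iB^{(i)}=B'$. This yields all the assertions of the theorem.
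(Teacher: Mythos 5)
The paper itself does not prove this statement: Theorem \ref{HLS} is quoted from \cite{HLS} and used as a black box, so there is no in-paper proof to compare yours against. Judged on its own terms, your outline does reproduce the architecture of the argument in \cite{HLS}: (a) take $\Gamma_1$ and $g$ from Theorem \ref{ACCPELC}; (b) write the vector of values of $\Gamma_1$ as a convex combination of nearby rational vectors --- your box/vertex construction is correct and yields injections $g_i$ with $|g_i(x)-x|<\delta$ and $g=\sum_i a_i\,g_i\circ g$; (c) invoke a uniform perturbation statement to conclude that each $(X,\sum_j g_i(g(b_j))B_j)$ is still lc and $\bR$-complementary; (d) reduce to the nef case by an MMP on the Fano type variety and apply boundedness of complements for finite rational coefficient sets (Theorem \ref{bddcomp} of the preliminaries), then take a common multiple $n$.

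The genuine gap is step (c), and you flag it yourself. The claim that there is a $\delta_0(d,\Gamma)>0$ such that, uniformly over all Fano type $X$ of dimension at most $d$ and all $\bQ$-Cartier Weil divisors $B_j\ge 0$, every $\delta_0$-perturbation of a complementary lc boundary with coefficients in $\Gamma_1$ remains lc and $\bR$-complementary is precisely the uniform rational (lc and complement) polytope theorem, which is the main technical content of \cite{HLS}, proved there by induction on dimension together with ACC for log canonical thresholds and boundedness of complements. Writing ``granting all this'' at that point makes your proposal a correct reduction of Theorem \ref{HLS} to that result, not a proof of it; in particular, without (c) even the lc-ness of $(X,\sum_j g_i\circ g(b_j)B_j)$, which is part of the conclusion, is unestablished, since $g_i\circ g(b_j)$ may exceed $g(b_j)$. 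A smaller unproved ingredient is your asserted equivalence, on Fano type varieties, between ``lc with $-(K_X+\Delta)$ pseudo-effective'' and ``has an $\bR$-complement''; this is true but itself requires an argument (run a $-(K_X+\Delta)$-MMP and use that nef divisors on Mori dream spaces are semiample, choosing a general member carefully near the non-klt locus).
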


\begin{definition}[{\cite[Definition 1.12]{HLS}}]\label{HLScomplement}
Assume $(X,B)$ is a normal pair,
$n$ is a positive integer,
and $\Gamma_1\subset [0,1]$
and $\Gamma_2\subset\bQ\cap [0,1]$ are two finite sets.
We say that $K_X+B^+$ is an $(n,\Gamma_1,\Gamma_2)$-complement of $(X,B)$ if the following conditions are satisfied.
\begin{enumerate}
\item $B^+\geq B$,
\item there is a finite set $\{a_i\}_{i=1}^r\subset\Gamma_1$ with $\sum a_i=1$,
\item there are divisors $B_i\geq 0$ on $X$ with coefficients of $B_i\in \Gamma_2$ for every $1\leq i\leq r$,
\item $\sum a_iB_i=B^+$, and
\item $K_X+B_i$ is an $n$-complement of itself for each $1\leq i\leq r$.
\end{enumerate}
\end{definition}

\begin{definition}
Under the notation of Definition \ref{HLScomplement}, if $(X,B_i)$ are klt, then we say that the $(n,\Gamma_1,\Gamma_2)$-complement $K_X+B^+$ is klt. Note that this implies that $(X,B_i)$ and $(X,B^+)$ are $\frac{1}{n}$-lc.
\end{definition}

\begin{thm}\label{g'}
Under the notation of  Theorem \ref{HLS}, for each $1\leq i\leq r$ there exist a closed DCC set of rational numbers $\mathscr{S}_i$ and a function $g'_i:\Gamma\rightarrow \mathscr{S}_i$ such that $B\leq\sum_i(a_i\sum_j g'_i(b_j)B_j)$ and $\sum_j g'_i(b_j)B_j\leq\frac{B+3\sum_j g(b_j)B_j}{4}$ for every $1\leq i\leq r$.
\end{thm}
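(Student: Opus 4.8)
The plan is to reduce the statement to the construction of a single function $g'\colon\Gamma\to\bQ\cap[0,1]$, used for every $i$, satisfying
$$x\ \le\ g'(x)\ \le\ \frac{x+3g(x)}{4}\qquad\text{for all }x\in\Gamma,$$
with image contained in some closed DCC set of rational numbers $\mathscr{S}$. Granting such a $g'$ and taking $g'_i:=g'$ and $\mathscr{S}_i:=\mathscr{S}$ for every $i$, both asserted inequalities are immediate: since $\sum_i a_i=1$ and $b_j\le g'(b_j)\le\frac{b_j+3g(b_j)}{4}$ with $B_j\ge 0$, the divisor $\sum_i a_i\sum_j g'_i(b_j)B_j=\sum_j g'(b_j)B_j$ lies between $\sum_j b_jB_j=B$ and $\sum_j\frac{b_j+3g(b_j)}{4}B_j=\frac{B+3\sum_j g(b_j)B_j}{4}$. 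So all the content is the construction of $g'$ and $\mathscr{S}$. Note that $g(x)\ge x$ by Theorem \ref{ACCPELC}, so $[x,\frac{x+3g(x)}{4}]$ is a nonempty interval; it degenerates to the point $\{x\}$ exactly when $x\in\Gamma_1$, where $g'(x)=x$ is then forced. Here one uses $\Gamma_1\subset\bQ$ (indeed the statement forces this: otherwise $g'_i$ would be obliged to take the irrational value $y\in\Gamma_1$).

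First I would build $\mathscr{S}$ by hand. For each $y\in\Gamma_1$ with $y>0$, I would choose recursively rationals $0=q^{(y)}_0<q^{(y)}_1<\cdots$, each $q^{(y)}_{n+1}$ a rational number slightly below $\frac{4q^{(y)}_n+3y}{7}$; any rational in $(q^{(y)}_n,\frac{4q^{(y)}_n+3y}{7})$ already forces the gap estimate $q^{(y)}_{n+1}-q^{(y)}_n<\frac34\big(y-q^{(y)}_{n+1}\big)$, and choosing it close enough to the upper end forces geometric convergence $q^{(y)}_n\nearrow y$. Put $S_y:=\{q^{(y)}_n\}_{n\ge 0}\cup\{y\}$, a closed DCC subset of $\bQ\cap[0,1]$ whose unique accumulation point $y$ lies in it, and $\mathscr{S}:=\{0\}\cup\bigcup_{y\in\Gamma_1}S_y$; as a finite union of such sets, $\mathscr{S}$ is again a closed DCC set of rationals.

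Then I would set $g'(x):=\max\{s\in\mathscr{S}\colon s\le\frac{x+3g(x)}{4}\}$, which exists because $\mathscr{S}$ is compact and contains $0$; the upper bound holds by definition. For $g'(x)\ge x$, write $y:=g(x)$. If $x=y$ then $y\in S_y\subseteq\mathscr{S}$ and $y\le\frac{x+3y}{4}=y$, so $g'(x)\ge y=x$. If $x<y$ (the subcase $x=0$ being trivial since $g'(x)\ge 0$), let $m\ge 1$ be least with $q^{(y)}_m\ge x$; then $q^{(y)}_m-x\le q^{(y)}_m-q^{(y)}_{m-1}<\frac34\big(y-q^{(y)}_m\big)\le\frac34(y-x)$, so $q^{(y)}_m\in[x,\frac{x+3y}{4}]\cap\mathscr{S}$ and hence $g'(x)\ge q^{(y)}_m\ge x$. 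Since $g'(\Gamma)\subseteq\mathscr{S}$, this completes the construction.

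The delicate point — the main obstacle — is producing a \emph{single} DCC set of rationals $\mathscr{S}$ whose closure already meets every window $[x,\frac{x+3g(x)}{4}]$, even though $x$ ranges over a DCC set that may accumulate at points of $\Gamma_1$ from below (so these windows shrink to a point there). The geometric-ratio sequences above are engineered precisely for this, and this is also the only place where the rationality of $\Gamma_1$ is used: if one only knew $\Gamma_1\subset\bR$, the same construction would still yield a DCC, rational, but possibly non-closed $\mathscr{S}$, which is harmless for the applications in the paper.
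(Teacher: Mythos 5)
Your proof is correct but takes a genuinely different and, I think, cleaner route than the paper's. The paper builds $i$-dependent functions $g'_i$: it introduces an auxiliary rational function $g^-$ with $g^-(x)<x\le g(x)$, forms the linear interpolant $x'=g^-(x)+\frac{(g_i(x)-g^-(x))(x-g^-(x))}{g(x)-g^-(x)}$ (so that $\sum_i a_i x'=x$), and selects $g'_i(x)\in[x',\frac{x'+g_i(x)}{2}]$ by means of a bijection $f\colon\bQ\to\mathbb{N}$, taking the rational of smallest $f$-value; the DCC property of the image is then argued by contradiction via nested subsequences. You instead collapse the $r$ functions into a \emph{single} $g'$ satisfying $x\le g'(x)\le\frac{x+3g(x)}{4}$ (which, via $\sum_i a_i=1$, immediately implies both stated inequalities), and you construct the target set $\mathscr{S}$ explicitly as a finite union of geometric-ratio sequences accumulating at the points of $\Gamma_1$, defining $g'(x)$ as a maximum over the compact set $\mathscr{S}$. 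This eliminates the bijection device and makes the DCC verification transparent rather than by contradiction. Your observation that the statement tacitly forces $\Gamma_1\subset\bQ$ is correct and, in fact, isolates a real subtlety: for $x\in\Gamma_1$ one must have $g'_i(x)=x$ whenever $a_i>0$, yet $g'_i$ is supposed to be $\bQ$-valued; the paper's own construction gives $g'_i(x)=g_i(x)$ at such $x$, and its passage from $g_i(b_j)-g'_i(b_j)\geq\frac{g(b_j)-b_j}{4}$ to $\sum_j g(b_j)B_j-\sum_j g'_i(b_j)B_j\geq\frac{\sum_j g(b_j)B_j-B}{4}$ implicitly needs $g_i(b_j)\le g(b_j)$, which need not hold since $g=\sum_i a_i g_i$ is only an average. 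One small inaccuracy in your closing aside: if $\Gamma_1\not\subset\bQ$ the construction does not merely yield a non-closed $\mathscr{S}$ — for irrational $y\in\Gamma_1$ the window $[y,\frac{y+3g(y)}{4}]=\{y\}$ contains no rational, so $g'(y)\ge y$ is simply unachievable and the first inequality fails outright. But since, as you note, the statement already forces $\Gamma_1\subset\bQ$, this does not affect the body of your argument.
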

\begin{proof}
By shrinking $\Gamma_1$, we may assume that it is the image of $g$ and so $g$ is identity on $\Gamma_1$. We may also assume that $0$ is not in $\Gamma_1$. We define a finite rational set $\Gamma^-$ and a function $g^-:\Gamma\rightarrow \Gamma^-$ as following. For $\gamma\in\Gamma_1$, let $\gamma^-$ be any rational number in the intevial \[(\max\{0,x|x\in\Gamma,g(x)<\gamma\},\min\{x|x\in\Gamma,g(x)\geq \gamma\}).\] Note that here the maximum and minimum exist and $\max\{0,x|x\in\Gamma,g(x)<\gamma\}<\min\{x|x\in\Gamma,g(x)\geq \gamma\}$. Let $g^-(x)=g(x)^-$ for every $x\in\Gamma$.
It follows by construction that $g^-(x)\leq x<g(x)$ for every $x\in\Gamma$.
By abuse of notations, we denote the functions $g_i\circ g$ as $g_i$. Since $\Gamma_1$ and $\Gamma^-$ are finite, by possibly replacing $\delta$, we may assume that $g(x)-g_i(x)\leq \frac{g(x)-g^-(x)}{2}$ for every $x\in\Gamma$. Therefore we have $2(g_i(x)-g^-(x))\geq(g(x)-g^-(x))$ for every $x\in\Gamma$. Let $f:\bQ\rightarrow \mathbb{N}$ be any bijection. We now define the functions $g'_i$ as following.
Assume that $x\in\Gamma$, let $x'=g^-(x)+\frac{(g_i(x)-g^-(x))(x-g^-(x))}{(g(x)-g^-(x))}$. Then
\begin{align*}
g_i(x)-x'=&g_i(x)-g^-(x)-\frac{(g_i(x)-g^-(x))(x-g^-(x))}{(g(x)-g^-(x))}\\
=&(g_i(x)-g^-(x))\frac{(g(x)-g^-(x))-(x-g^-(x))}{(g(x)-g^-(x))}\\
=&(g_i(x)-g^-(x))\frac{(g(x)-x)}{(g(x)-g^-(x))}\\
\geq&0,
\end{align*}
where the equality holds if and only if $g(x)=x$.
Let $g'_i(x)$ be the rational number in $[x',\frac{x'+g_i(x)}{2}]$ with the smallest $f$ value. Note that here, if $x'=\frac{x'+g_i(x)}{2}$, then it is a rational number in $\Gamma_i$.
Let $\mathscr{S}_i$ be the closure or the image of $g'_i$. 

We now argue that $\mathscr{S}_i$ is a rational DCC set. 
It is enough to show that the image of $g'_i$ is a DCC set with rational accumulation points.

Fix $i$ for each $1\leq i\leq r$. Let $\{x_l\}_l$ be a sequence of elements in $\Gamma$ such that $\{g'_i(x_l)\}_l$ is and strictly converges to some point $x_0\in\mathscr{S}_i$.
Suppose either $\{g'_i(x_l)\}_l$ is strictly decreasing or $x_0$ is irrational.
By construction, $g'_i(x_l)$ is the element with the smallest $f$ value in $\bQ\cap[x_l',\frac{x_l'+g_i(x_l)}{2}]$, where $x_l'=g^-(x_l)+\frac{(g_i(x_l)-g^-(x_l))(x_l-g^-(x_l))}{(g^+(x_l)-g^-(x_l))}$.
Because the image of $g^-$, $g_i$ and $g^+$ are finite, $\frac{(g_i(x)-g^-(x))}{(g^+(x)-g^-(x))}>0$ for every $x\in\Gamma$, and $\Gamma$ is DCC, it follows that $\{x_l'\}_l$ and $\{\frac{x_l'+g_i(x_l)}{2}\}_l$ are both DCC. Passing through a subsequence, we may assume that $\{x_l'\}_l$ and $\{\frac{x_l'+g_i(x_l)}{2}\}_l$ are non-decreasing.
Passing to a subsequence, we may assume the sequence $\{g_i(x_l)\}_l$ is a constant sequence.
Further, if $\{x_l'\}_l$ is a constant sequence, $\{\frac{x_l'+g_i(x_l)}{2}\}_l$ is also a constant sequence, and this implies $\{g'_i(x_l)\}_l$ is also constant, which is a contradiction.
Now we may assume that $\{x_l'\}_l$ and $\{\frac{x_l'+g_i(x_l)}{2}\}_l$ are strictly increasing and $\{x_l'\}_l$ converges to $x'_0\leq g_i(x_l)\in \Gamma'_i$.
If $x'_0=g_i(x_l)$, then neither $\{g'_i(x_l)\}_l$ is strictly decreasing nor $x_0=g_i(x_l)$ is irrational, which is a contradiction.
So we have $x'_0<g_i(x_l)$. Let $y$ be a rational number in $[x'_0,\frac{x'_0+g_i(x_l)}{2})$
Passing through a subsequence again, we may assume that $\frac{x'+g_i(x)}{2}>y$ for every $l$.
This implies that $x_l'<x'_0<y<\frac{x_l'+g_i(x_l)}{2}$ for every $l$. 
Then $f(g'_i(x_l))\leq f(y)$ for every $l$ because $g'_i(x_l)$ is the rational number in $[x'_l,\frac{x'+g_i(x)}{2}]$ with the smallest $f$ value.
But the set $\{f(g'_i(x_l))\}$ is infinite because  $\{g'_i(x_l)\}_l$ is strictly converging. This contradicts to our assumption that $f$ is a bijection.

It then remains to show that
$B\leq\sum_i(a_i\sum_j g'_i(b_j)B_j)$ and $\sum_j g'_i(b_j)B_j\leq\frac{B+3\sum_j g(b_j)B_j}{4}$ for every $1\leq i\leq r$.
We simply compute that
\begin{align*}
\sum_i(a_i\sum_j g'_i(b_j)B_j)\geq&\sum_i(a_i\sum_j (g^-(b_j)+\frac{(g_i(b_j)-g^-(b_j))(b_j-g^-(b_j))}{(g(b_j)-g^-(b_j))})B_j)\\
=&\sum_j (g^-(b_j)+\frac{(\sum_i(a_ig_i(b_j))-g^-(b_j))(b_j-g^-(b_j))}{(g(b_j)-g^-(b_j))})B_j\\
=&\sum_j (g^-(b_j)+\frac{(g(b_j)-g^-(b_j))(b_j-g^-(b_j))}{(g(b_j)-g^-(b_j))})B_j\\
=&\sum_jb_jB_j=B.
\end{align*}
On the other hand, we have
\begin{align*}
\sum_j g(b_j)B_j-\sum_j g'_i(b_j)B_j)&=\sum_i(a_i\sum_j g_i(b_j)B_j)-\sum_i(a_i\sum_j g'_i(b_j)B_j)\\
=&\sum_j\sum_ia_i (g_i(b_j)-g'_i(b_j))B_j.\\
\end{align*}
We compute that for each $i$,
\begin{align*}
g_i(b_j)-g'_i(b_j)&\geq g_i(b_j)-\frac{g_i(b_j)+g^-(b_j)+\frac{(g_i(b_j)-g^-(b_j))(b_j-g^-(b_j))}{(g(b_j)-g^-(b_j))}}{2}\\
=&\frac{g_i(b_j)-g^-(b_j)-\frac{(g_i(b_j)-g^-(b_j))(b_j-g^-(b_j))}{(g(b_j)-g^-(b_j))}}{2}\\
=&\frac{1}{2}(g_i(b_j)-g^-(b_j))\frac{(g(b_j)-g^-(b_j))-(b_j-g^-(b_j))}{(g(b_j)-g^-(b_j))}\\
=&\frac{1}{2}(g_i(b_j)-g^-(b_j))\frac{(g(b_j)-b_j)}{(g(b_j)-g^-(b_j))}\\
\geq&\frac{(g(b_j)-b_j)}{4}.
\end{align*}
It follows that $\sum_j g(b_j)B_j-\sum_j g'_i(b_j)B_j)\geq \frac{\sum_jg(b_j)B_j-B}{4}$. Therefore, we have 
\begin{align*}
\sum_j g'_i(b_j)B_j\leq&\sum_jg(b_j)B_j-\frac{\sum_jg(b_j)B_j-B}{4}\\
=&\frac{B+3\sum_j g(b_j)B_j}{4}.
\end{align*}
\end{proof}

\subsection{$\alpha$-invariants and log canonical thresholds}
\begin{definition}
Let $(X,B)$ be a projective lc pair and let $D$ be an effective ${\bR}$-Cartier divisor, we define the {\it log canonical threshold} of $D$ with respect of $(X,B)$ to be
\[\lct((X,B),D)=\sup\{\text{$t\in \mathbb{R}$ $|$ $(X,B+tD)$ is lc}\}.\]
The log canonical threshold of $|D|_{\bR}$ with respect of $(X,B)$ is defined to be
\[\lct((X,B),|D|_{\bR})=\inf\{ \mathrm{lct}((X,B),M)| M\in|D|_{\bR}\}.\]
\end{definition}
\begin{definition}
Let $(X,B)$ be a projective lc pair such that $|-(K_X+B)|_{\bR}$ is non-empty, we define the {\it $\alpha$-invariant} of $(X,B)$ to be
\[\alpha(X,B)=\mathrm{lct}((X,B),|-(K_X+B)|_{\bR}).\]
In the case when $B=0$, we usually write $\alpha(X)\defeq \alpha(X,0)$ for convenience.
\end{definition}
Now we consider $\alpha(X,B)^{d}(-(K_X+B))^d$ as an invariant for $d$-dimensional klt Fano pairs $(X,B)$. It is well known that this invariant has an upper bound, which can be given by the following lemma.
\begin{lem}[{\cite [Theorem 6.7.1]{Kol97}}]\label{kollar}
Let $(X,B)$ be a klt pair of dimension $d$. Then we have
$$\mathrm{lct}((X,B),|H|_{\bR})^dH^d\leq d^d$$
for any nef and big $\bQ$-Cartier divisor $H$ on $X$.
\end{lem}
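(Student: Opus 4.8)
The plan is to run the standard multiplicity argument of Kollár, which produces divisors in $|mH|$ with a point of very large multiplicity. The starting point is the identity
\[
\lct((X,B),|H|_{\bR})=\inf\{\lct((X,B),M)\mid M\in|H|_{\bR}\},
\]
so to bound $\lct((X,B),|H|_{\bR})$ from above it suffices to exhibit, for every $\varepsilon>0$, a single member $M\in|H|_{\bR}$ with $\lct((X,B),M)$ close to $d/(H^d)^{1/d}$; here $H^d=\mathrm{vol}(X,H)>0$ since $H$ is nef and big. The members I would use are the rescalings $\tfrac1m D$ of divisors $D\in|mH|$ passing through a fixed general point to high order.

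First I would fix a general point $x\in X$, which is automatically a smooth point of $X$ not contained in $\Supp B$. For such $x$, requiring a section of $\OO_X(mH)$ to vanish to order $\ge k$ at $x$ imposes at most $\binom{k-1+d}{d}$ linear conditions, so such a nonzero section exists whenever $h^0(X,\OO_X(mH))>\binom{k-1+d}{d}$. Using $\mathrm{vol}(X,H)=H^d$ (valid as $H$ is nef) together with the definition of the volume as a $\limsup$, I would choose a sequence $m_j\to\infty$ with $h^0(X,\OO_X(m_jH))\ge (H^d-\varepsilon)m_j^{\,d}/d!$, and then, using the crude bound $\binom{k-1+d}{d}\le (k+d)^d/d!$, pick integers $k_j$ with $\binom{k_j-1+d}{d}<h^0(X,\OO_X(m_jH))$ and $k_j/m_j\to (H^d-\varepsilon)^{1/d}$. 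This yields divisors $D_j\in|m_jH|$ with $\mult_x D_j\ge k_j$.

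The log canonical threshold of $D_j$ is then bounded by a single blow-up of $x$: if $\sigma\colon X'\to X$ is the blow-up with exceptional divisor $E$, then $K_{X'}=\sigma^*K_X+(d-1)E$, $\sigma^*D_j=\widetilde{D_j}+(\mult_xD_j)E$, and $\sigma^*B$ does not involve $E$ since $x\notin\Supp B$; hence the log discrepancy of $E$ with respect to $(X,B+tD_j)$ equals $d-t\,\mult_xD_j$, which is negative once $t>d/\mult_xD_j$. Therefore $\lct((X,B),D_j)\le d/\mult_xD_j\le d/k_j$, so $M_j\defeq\tfrac1{m_j}D_j\in|H|_{\bR}$ satisfies
\[
\lct((X,B),M_j)=m_j\,\lct((X,B),D_j)\le \frac{m_j d}{k_j}\longrightarrow \frac{d}{(H^d-\varepsilon)^{1/d}}.
\]
This gives $\lct((X,B),|H|_{\bR})\le d/(H^d-\varepsilon)^{1/d}$, and letting $\varepsilon\to 0$ yields $\lct((X,B),|H|_{\bR})^dH^d\le d^d$.

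I do not expect any deep obstacle here; the only \textbf{delicate} points are bookkeeping. One must pass to a subsequence $m_j$ realizing the volume up to $\varepsilon$, and check that the elementary binomial estimate genuinely allows choosing integers $k_j$ with $\binom{k_j-1+d}{d}<h^0(X,\OO_X(m_jH))$ while keeping $k_j/m_j\to (H^d-\varepsilon)^{1/d}$. Everything else is a direct discrepancy computation at a smooth point, and the pair structure enters only through $B\ge 0$ and the genericity of $x$.
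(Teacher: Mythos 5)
The paper states this as a citation to Koll\'ar (\cite[Theorem 6.7.1]{Kol97}) and does not reproduce a proof, so there is nothing internal to compare against; your argument is correct and is exactly Koll\'ar's standard multiplicity/Riemann--Roch argument. The one piece of bookkeeping you should make explicit is that $H$ is only $\bQ$-Cartier, so the sequence $m_j$ must range over multiples of the Cartier index of $H$ (this is harmless since the volume limsup is still attained along such a subsequence); everything else --- the count $\binom{k-1+d}{d}$ of conditions at a general smooth point $x\notin\Supp B$, the choice of $k_j$ with $k_j/m_j\to(H^d-\varepsilon)^{1/d}$, the discrepancy $d-1-t\,\mult_xD_j$ of the exceptional divisor of the blow-up at $x$, and the rescaling $\lct((X,B),\tfrac1{m_j}D_j)=m_j\,\lct((X,B),D_j)$ --- checks out.
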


\subsection{Potentially birational divisors}
\begin{definition}[cf. {\cite[Difinition 3.5.3]{HMX14}}]
Let $X$ be a projective normal variety, and $D$ a big $\mathbb{Q}$-Cartier $\bQ$-divisor on $X$. Then, we say that $D$ is {\it potentially birational} if for any two general points $x$ and $y$ of $X$, there is an effective $\bQ$-divisor $\Delta\sim_{\bQ}(1-\epsilon)D$ for some $0<\epsilon<1$, such that, after possibly switching $x$ and $y$, $(X,\Delta)$ is not klt at $y$, lc at $x$ and $x$ is a non-klt center.
\end{definition}

\subsection{Descending chain condition}
\begin{definition}
A set of real numbers $\mathscr{S}$ is said to {\it satisfy descending chain condition (DCC for short)} if for every non-empty subset $S$ of $\mathscr{S}$, there is a minimum element in $S$. $\mathscr{S}$ is called a DCC set if it satisfies DCC.
\end{definition}

\section{Proofs of Theorems}\label{sec 3}
Now we restate and prove the theorems in Section 1.
We recall the following key Lemma of \cite{Ch18}.
\begin{thm}[{\cite[Theorem 3.1]{Ch18}}]\label{thm2}
Fix a positive integer $d$ and a positive real number $\theta$. Then there is a number $m$ depending only on $d$ and $\theta$ such that if $X$ is a projective normal variety satisfying
\begin{enumerate}
\item $\dim X=d,$
\item there exists a boundary $B$ such that $(X, B)$ is klt,
\item there is a nef $\bQ$-Cartier $\bQ$-divisor $H$ on $X$ with $\mathrm{lct}((X,B),|H|_{\bR})>\theta$, and
\item $H^d>\theta$,
\end{enumerate}
then $|K_X+\ceil{mH}|$ defines a birational map and $mH$ is potentially birational.
\end{thm}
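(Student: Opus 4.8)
The plan is to derive both conclusions from the single assertion that, for a suitable $m=m(d,\theta)$, the $\bQ$-Cartier divisor $mH$ is potentially birational (it is big, being nef with $H^d>\theta>0$). Granting this, the birationality of $|K_X+\ceil{mH}|$ follows by the standard argument relating potential birationality to the separation of general points by sections of $K_X+\ceil{mH}$: one attaches a multiplier ideal to the witnessing divisor $0\le\Delta\sim_{\bQ}(1-\epsilon)mH$ and applies Kawamata--Viehweg/Nadel vanishing, noting that $K_X+\ceil{mH}-\Delta\sim_{\bQ}\epsilon mH+(\ceil{mH}-mH)$ is nef and big (cf.~\cite{HMX14}). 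I would record two facts used throughout: by (2) the variety $X$ itself is klt (adding the effective $B$ only decreases discrepancies), and a general point of $X$ avoids $\Supp B$ and the augmented base locus $\mathbb{B}_+(H)$, so near such points one may pass freely between $K_X+B+(\cdot)$ and $K_X+(\cdot)$.

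The engine for potential birationality is a volume estimate. Since $H$ is nef and big with $\Vol(X,H)=H^d>\theta$, asymptotic Riemann--Roch on $X$ gives $h^0(X,\OO_X(\rounddown{NH}))>\tfrac{\theta}{2}\tfrac{N^d}{d!}$ for $N\gg0$, while imposing multiplicity $\ge r$ at a fixed general (hence smooth) point costs $\binom{r+d-1}{d}$ conditions. Hence for every target $t$ and general point $p$ there is an effective $\bQ$-divisor $G_{p,t}\sim_{\bQ}\mu_tH$ with $\mult_pG_{p,t}\ge t$ and $\mu_t\le C(d)\,t\,\theta^{-1/d}$, which I may take general in its linear subsystem, hence smooth away from $p$ and away from a fixed proper closed subset that general points avoid; I may also require it to avoid any fixed subvariety through a general point. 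The key point is that $\mu_t$ grows linearly in $t$ with constant governed by $\theta$.

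Now fix general $x,y$. First I would create a non-klt centre at $x$: set $D_0:=G_{x,2d}\sim_{\bQ}\mu_0H$ and $c_0:=\lct_x((X,B),D_0)\le\tfrac{d}{2d}=\tfrac12$, so $(X,B+c_0D_0)$ is lc in a neighbourhood of $x$, not klt at $x$, with a minimal non-klt centre $V_0\ni x$ which, for $x$ general, is normal and smooth at $x$, of dimension $v_0\le d-1$. If $v_0>0$ I would cut it down: choose $D_1\sim_{\bQ}\lambda_1H$ as above, general, with $\mult_xD_1\ge v_0+1$, $D_1\not\supseteq V_0$, and $\lambda_1\le C(d)\,d\,\theta^{-1/d}$; since $\mult_x(D_1|_{V_0})\ge\mult_xD_1>\dim V_0$, tie-breaking (cf.~\cite{HMX14},~\cite{Kol97}) yields, after rescaling, $\Theta_1\sim_{\bQ}\Lambda_1H$ with $(X,B+\Theta_1)$ lc near $x$ and minimal non-klt centre $V_1\subsetneq V_0$ near $x$. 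Iterating $\le d$ times produces $\Theta_x\sim_{\bQ}\Lambda_xH$ with $(X,B+\Theta_x)$ lc in a neighbourhood of $x$, minimal non-klt centre exactly $\{x\}$, and $\Lambda_x=\Lambda_x(d,\theta)$. Next I would take $E:=G_{y,2d}\sim_{\bQ}\mu_0H$, general, so that $E$ avoids $x$ and the finitely many subvarieties produced above; with $c':=\lct_y((X,0),E)\le\tfrac12$ the pair $(X,c'E)$ is not klt at $y$. Setting $\Delta:=\Theta_x+c'E\sim_{\bQ}\Lambda H$ with $\Lambda:=\Lambda_x+c'\mu_0=\Lambda(d,\theta)$, the pair $(X,\Delta)$ is lc near $x$ with minimal non-klt centre $\{x\}$ and not klt at $y$; taking $m:=\roundup{\Lambda}+1$ and $\epsilon:=1-\Lambda/m\in(0,1)$ exhibits $0\le\Delta\sim_{\bQ}(1-\epsilon)mH$, so $mH$ is potentially birational. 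Hypothesis (3), $\lct((X,B),|H|_{\bR})>\theta$, is what makes this quantitative: since $\tfrac1\mu G\in|H|_{\bR}$ whenever $G\sim_{\bQ}\mu H$ is effective, the bound keeps every auxiliary divisor klt after scaling by $\theta/\mu$, which bounds the singularities entering the log canonical thresholds and tie-breaking coefficients, and hence $\Lambda_x$, $\Lambda$ and $m$, in terms of $d$ and $\theta$ only.

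The hardest part is the cutting-down step: I would need to verify that at a general point the minimal non-klt centre has the usual good properties (normality, a unique non-klt place over it, smoothness at $x$), so that each reduction of dimension can be effected by imposing a vanishing condition only at the single point $x$ — rather than generically along the positive-dimensional centre $V_i$, whose restricted volume $\Vol_{X\mid V_i}(H)$ is not controlled below — and that the accumulation of these $\le d$ steps, together with the control from (3), keeps $\Lambda$ bounded by a function of $d$ and $\theta$ alone. Everything else is an exercise with the volume estimate and with the definition of potential birationality.
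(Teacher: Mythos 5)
The overall strategy — produce, for a general point $x$, a $\bQ$-divisor $\Theta_x\sim_\bQ\Lambda H$ with bounded $\Lambda$ making $x$ an isolated non-klt centre, add a second divisor centred at another general point $y$, and read off potential birationality and hence birationality of $|K_X+\ceil{mH}|$ via Nadel vanishing — is indeed the one underlying \cite[Theorem 3.1]{Ch18} (which follows \cite{Jia17}, itself modelled on \cite{HMX13}).

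The gap, which you yourself flag as ``the hardest part,'' is precisely where your proposal diverges from the paper and where it is not yet a proof. You propose to perform each dimension-cutting step by imposing a multiplicity condition \emph{only at the single point} $x$, using $\mult_x(D_1|_{V_0})\ge\mult_xD_1$ to get $\lct_x(V_0;D_1|_{V_0})<1$, and you explicitly try to avoid the restricted volume $\Vol_{X|V_0}(H)$, which is indeed not controlled by hypotheses (1) and (4) alone. But as presented, nothing in your cutting-down step actually uses hypothesis (3), $\lct((X,B),|H|_\bR)>\theta$: your bounds on $\mu_t$, $\lambda_i$ and the thresholds all come from $H^d>\theta$ and $\dim X=d$. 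If that were enough, the conclusion (birational boundedness with $m=m(d,\theta)$) would hold with the volume bound alone, which cannot be right. The place the $\alpha$-invariant must enter is exactly the cutting-down step, and in \cite{Jia17} and \cite{Ch18} it does so through Kawamata subadjunction and the restricted volume: one writes $K_{V_i}+\Theta_{V_i}\sim_\bQ(K_X+B+\Theta_i)|_{V_i}$ on the minimal non-klt centre, and then the lct bound on $X$ is what gives a quantitative lower bound on $\Vol_{X|V_i}(H)$ (or, equivalently, on the coefficient needed to cut further) — controlling the contribution of the subadjunction divisor $\Theta_{V_i}$, whose coefficients near $x$ are a priori unbounded and can push the local threshold $\lct_x(V_i,\Theta_{V_i};D_1|_{V_i})$ to $0$, so that the non-klt centre produced on $V_i$ need not lie in a bounded family nor vary nicely with $x$. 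Your one-sentence remark that the lct bound ``keeps every auxiliary divisor klt after scaling by $\theta/\mu$'' gestures at this but does not supply the needed estimate; without it the iteration gives no control on how $\Theta_{V_i}$ accumulates, and the claim that $\Lambda_x$ depends only on $d$ and $\theta$ is unsupported. The paper's route is to bound the restricted volume along each successive centre via the $\alpha$-invariant, and then do the HMX-type cutting generically along $V_i$; if you want to keep the single-point version you must still make the subadjunction term quantitative, and that is where (3) has to be invoked.
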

\begin{cor}[{\cite[Corollary 3.2]{Ch18}}]\label{cor2}
Fix positive integers $d$, $n$, and a positive real number $\theta$. Then there is a number $m$ depending only on $d$, $n$ and $\theta$ such that if $(X,B)$ is a weak Fano pair satisfying
\begin{enumerate}
\item $\dim X=d,$
\item $\alpha(X,B)>\theta$,
\item $K_X+B$ is a $\bQ$-Cartier $\bQ$-divisor,
\item $(-K_X-B)^d>\theta$, and
\item there is an $n$-complement $K_X+B^+$ of $K_X+B$ with $B^+\geq B$,
\end{enumerate}
then $|\floor{m(B^+-B)}|$ defines a birational map.
\end{cor}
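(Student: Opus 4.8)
We indicate how to deduce the statement from Theorem \ref{thm2}. The plan is to apply that theorem to the nef and big $\bQ$-Cartier $\bQ$-divisor $H:=-(K_X+B)$ and then transport the birationality it yields to the system $|\lfloor m(B^+-B)\rfloor|$ by means of the relation $K_X+B^+\sim_\bQ 0$ coming from the $n$-complement.

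First I would check the hypotheses of Theorem \ref{thm2} for $H$. The pair $(X,B)$ is klt: if it had a non-klt center $W$, then, $-(K_X+B)$ being big, there would be an effective $M\sim_\bR-(K_X+B)$ with $W\subseteq\Supp M$, and for $t>0$ the pair $(X,B+tM)$ would fail to be log canonical (some non-klt place of $(X,B)$ over $W$ has negative log discrepancy), so $\lct((X,B),M)=0$ and $\alpha(X,B)=0$, contradicting (2). By (3) the divisor $H$ is nef and $\bQ$-Cartier, by (2) $\lct((X,B),|H|_{\bR})=\alpha(X,B)>\theta$, and by (4) $H^d=(-(K_X+B))^d>\theta$. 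Hence Theorem \ref{thm2} provides a number $m_0$, depending only on $d$ and $\theta$, such that $m_0H$ is potentially birational; moreover $m_0H$ is big, since $H$ is.

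Next I would use (5). As $n(K_X+B^+)\sim 0$, the divisor $nB^+=n(K_X+B^+)-nK_X$ is an integral Weil divisor linearly equivalent to $-nK_X$; in particular every nonzero coefficient of $B^+$ is $\geq 1/n$ and $K_X\sim_\bQ-B^+$. Put $m:=2nm_0$, which depends only on $d$, $n$ and $\theta$, and set $P:=\lfloor m(B^+-B)\rfloor$. From $K_X\sim_\bQ-B^+$ one obtains $K_X+m_0H=(1-m_0)K_X-m_0B\sim_\bQ(m_0-1)B^+-m_0B$, so $P-K_X-m_0H\sim_\bQ P-(m_0-1)B^++m_0B$. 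The divisor on the right is effective: at a prime divisor $F$ with $\mult_F B^+=\mult_F B$ its coefficient equals $\mult_F B\geq 0$, and at $F$ with $\delta:=\mult_F B^+-\mult_F B>0$ (so $\mult_F B^+\geq 1/n$) its coefficient equals $\lfloor m\delta\rfloor-(m_0-1)\delta+\mult_F B$, which is $\geq 0$ --- for $m\delta\geq 1$ because $\lfloor m\delta\rfloor\geq m\delta/2$ and $m\geq 2m_0-2$, and for $m\delta<1$ because $\mult_F B\geq 1/n-\delta$ and $m\geq nm_0$. Therefore $P-K_X-m_0H$ is $\bQ$-linearly equivalent to an effective divisor, hence is pseudo-effective.

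Finally, since $m_0H$ is big and potentially birational, $P-K_X-m_0H$ is pseudo-effective, and $P$ is an integral Weil divisor, the argument underlying Theorem \ref{thm2} applies: for general $x,y\in X$ one tie-breaks the boundary furnished by the definition of potential birationality to get $\Delta'\sim_\bQ(1-\epsilon')m_0H$ with a single non-klt place whose center passes through $x$ and with $(X,\Delta')$ not klt at $y$; then $P-K_X-\Delta'\sim_\bR(P-K_X-m_0H)+\epsilon'm_0H$ is big, and Nadel vanishing forces sections of $\OO_X(P)$ to separate $x$ and $y$, so $|P|=|\lfloor m(B^+-B)\rfloor|$ defines a birational map. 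I expect the only delicate point to be the coefficient bookkeeping above: the denominators of $B$ are a priori unbounded, so the uniformity of $m$ relies on $\lfloor m(\cdot)\rfloor$ ignoring coefficients below $1/m$ together with the quantization $\mult_F B^+\in\frac1n\bZ$; granting that, the rest is a direct application of Theorem \ref{thm2}.
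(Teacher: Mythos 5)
The deduction from Theorem~\ref{thm2} is the right route, and the heart of your argument---the coefficient bookkeeping with $m=2nm_0$ showing $P-K_X-m_0H=\lfloor m(B^+-B)\rfloor-K_X-m_0H$ is $\bQ$-linearly equivalent to the explicit effective divisor $P-(m_0-1)B^++m_0B$---is correct; I checked both the case $m\delta\geq 1$ (using $\lfloor m\delta\rfloor\geq m\delta/2$ and $m/2-m_0+1>0$) and the case $m\delta<1$ (using $\mult_F B^+\geq 1/n$ and $\delta<1/m\leq 1/(nm_0)$). The use of condition (5) to get $K_X\sim_\bQ -B^+$ and $\mult_F B^+\in\frac1n\ZZ$ is exactly the leverage that makes the floor manageable.

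The gap is in the last step. You invoke Nadel vanishing after observing only that $P-K_X-\Delta'$ is \emph{big}; Nadel requires it to be \emph{nef and big}. Since $H$ is merely nef (not ample) and the effective divisor $E\sim_\bQ P-K_X-m_0H$ is not controlled, the divisor $P-K_X-\Delta'\sim_\bR E+\epsilon'm_0H$ need not be nef. The standard repair is to absorb $E$ into the boundary: for $x,y$ general (so $x,y\notin\Supp E$) replace $\Delta'$ by $\Delta'+E$, which has the same non-klt behaviour near $x$ and $y$, and note $P-K_X-(\Delta'+E)\sim_\bQ\epsilon'm_0H$ is nef and big. You should also say a word about why $\OO_X(P)$ and the multiplier-ideal computation make sense when $P=\lfloor m(B^+-B)\rfloor$ need not be Cartier on a general weak Fano $X$---in practice one first passes to a small $\bQ$-factorialization (as the paper does in the proof of Theorem~\ref{thm4.2} via Lemma~\ref{qfact}) so that $P$ and $E$ are $\bQ$-Cartier. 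With those two points supplied, the proof is complete; as written, the Nadel step does not yet go through.
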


We recall the following theorem by Hacon and Xu.
\begin{thm}[{\cite [Theorem 1.3]{HX15}}]\label{hx}
Fix a positive integer $d$ and a DCC set $\mathscr{I}$ of rational numbers in $[0,1]$. The set of all projective pairs $(X,B)$ satisfying
\begin{enumerate}
\item $(X,B)$ is klt log Calabi--Yau of dimension $d$,
\item $B$ is big, and
\item the coefficients of $B\in\mathscr{I}$,
\end{enumerate}
forms a bounded family.
\end{thm}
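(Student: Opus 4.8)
This is \cite[Theorem~1.3]{HX15}, so I would reconstruct Hacon and Xu's argument; the ingredients are the global ACC theorem and ACC for log canonical thresholds \cite{HMX14}, the effective birationality and bounded--volume boundedness results of Hacon--McKernan--Xu, the minimal model program, and an induction on $d$. First I would reduce to a finite coefficient set: since $(X,B)$ is lc with $K_X+B\equiv 0$ and the coefficients of $B$ lie in the DCC set $\mathscr{I}$, the global ACC theorem forces those coefficients into a fixed finite subset $\mathscr{I}_0=\mathscr{I}_0(d,\mathscr{I})\subseteq\mathscr{I}$; as $(X,B)$ is klt no coefficient equals $1$, so in fact the coefficients lie in $\mathscr{I}_0\cap[0,1-\delta]$ for some $\delta=\delta(d,\mathscr{I})>0$. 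In particular $K_X+B\sim_{\bQ}0$, the divisor $-K_X\sim_{\bQ}B$ is big, and $X$ is of Fano type.

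The core of the plan is to pin the number $v\defeq\Vol(X,-K_X)=\Vol(X,B)$ between two positive constants depending only on $d$ and $\mathscr{I}$. For the lower bound I would run a $(K_X+(1-s)B)$-MMP for a small fixed rational $s>0$ (this is the MMP of a klt pair whose anti--log--canonical divisor is big) to a model $X'$ on which $-(K_{X'}+(1-s)B')$ is nef and big, apply effective birationality of \cite{HMX14} to the weak log Fano pair $(X',(1-s)B')$ to obtain a uniform $m=m(d,\mathscr{I})$ with $|-mK_{X'}|$ birational, and track how the MMP affects $v$; equivalently one shows directly that $v$ lies in a DCC set depending only on $d$ and $\mathscr{I}$, hence is bounded below by a positive constant. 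For the upper bound --- the delicate point --- I would use the bigness of $B$ together with ACC for log canonical thresholds: for a general point $x\in X$, scaling $B$ produces an effective $\bR$-divisor $\Delta\sim_{\bR}\lambda(-K_X)$ with $(X,\Delta)$ not klt at $x$ and $\lambda\le\lambda_0(d,\mathscr{I})$, and tie-breaking yields a minimal non-klt centre $W\ni x$; if $\dim W=0$ a volume inequality of Kollár type (in the spirit of Lemma~\ref{kollar}) bounds $v$ directly, while if $\dim W>0$ one uses adjunction, resp.\ the canonical bundle formula, to write $(K_X+\Delta')|_W\sim_{\bR}K_W+B_W$ with the coefficients of $B_W$ in a fixed DCC set and $-K_X|_W$ big, invokes the inductive hypothesis in dimension $\dim W<d$ to bound $\Vol(W,-K_X|_W)$, and then lets $x$ vary to bound $v$.

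With the coefficients of $B$ in the finite set $\mathscr{I}_0$ and $v$ squeezed between positive constants, I would conclude boundedness of the couples $(X,\Supp B)$ by the mechanism used for pairs of log general type: from $|-mK_X|$ birational and $(-mK_X)^d=m^d v$ bounded one gets that the couples $(X,\Supp B)$ are log birationally bounded, and one upgrades to genuine log boundedness by running the minimal model program over the bounding family and using ACC for log canonical thresholds to keep the induced boundary divisors and the singularities inside a fixed family, exactly as adapted in \cite{HX15}. The hard part will be the upper bound on $v$ in the second step: one must extract from the bigness of $B$ a non-klt centre of controlled dimension and, when it is positive-dimensional, keep the coefficients produced by the canonical bundle formula inside a fixed DCC set so that the induction closes. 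This is precisely where the DCC-coefficient hypothesis and the bigness of $B$ enter in an essential way, and it is what separates the statement from the unconditional boundedness of klt Fano varieties, which fails without an $\epsilon$-lc assumption.
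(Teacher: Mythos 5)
Theorem~\ref{hx} is not proved in this paper: it is quoted verbatim as \cite[Theorem~1.3]{HX15} and used as a black box (it is the boundedness input that converts the construction of a klt complement with coefficients in a fixed finite set into log boundedness in the proofs of Theorems~\ref{thm4.2} and \ref{DCCmain}). So there is no internal proof to compare your proposal against. As a reconstruction of the Hacon--Xu argument your outline is broadly on the right lines --- global ACC to force the coefficients into a finite set, an upper bound on $\Vol(-K_X)$ obtained by producing non-klt centres from the big boundary and inducting on their dimension, effective birationality of $|-mK_X|$ after an MMP, and then ACC for log canonical thresholds to promote birational boundedness to log boundedness. One small correction: the lower bound on $v=\Vol(-K_X)$ that you propose to establish is not actually needed for this boundedness statement; birational boundedness follows from effective birationality together with an \emph{upper} bound on the volume (which controls the degree of the birational image), and no positive lower bound on $v$ enters the Hacon--Xu argument. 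Everything else you describe --- the use of adjunction/the canonical bundle formula to keep the induced coefficients on a minimal non-klt centre in a fixed DCC set so that the induction closes --- is indeed the delicate point of the original proof.
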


Then we recall the following log-version of {\cite [Lemma 2.26]{Bir16a}}.
\begin{lem}\label{qfact}
Fix positive integers $d$, $k$ and a non-negative real number $\epsilon$. Let  $\mathcal{P}$ be a set of klt weak Fano pairs of dimension $d$. Assume that for every element $(Y,B_Y)\in\mathcal{P}$, there is a $k$-complement (resp. $\bR$-complement) $K_Y+B^+_Y$ of $K_Y+B_Y$ such that $(Y,B^+_Y)$ is $\epsilon$-lc and $B^+_Y\geq B_Y$. Let  $\mathcal{Q}$ be the set of projective pairs $(X,B)$ such that
\begin{enumerate}
\item there is $(Y,B_Y)\in\mathcal{P}$ and a birational map $X\dashrightarrow Y$,
\item there is a common resolution $\phi :W\rightarrow Y$ and $\psi:W\rightarrow X$, and
\item $\phi^*(K_Y+B_Y)\geq\psi^*(K_X+B)$.
\end{enumerate}
Then for every element $(X,B)\in\mathcal{Q}$, there is a $k$-complement $K_X+B^+$ of $K_X+B$ such that $(X,B^+)$ is $\epsilon$-lc and $B^+\geq B$.
\end{lem}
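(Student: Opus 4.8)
The plan is to push the given complement from $Y$ down to $X$ through the common resolution $W$, and then to verify, one by one, the conditions defining a $k$-complement together with the $\epsilon$-lc property and $B^+\geq B$. Fix $(X,B)\in\mathcal{Q}$, the corresponding $(Y,B_Y)\in\mathcal{P}$, its $k$-complement $K_Y+B_Y^+$ (so $(Y,B_Y^+)$ is $\epsilon$-lc and $B_Y^+\geq B_Y$), and the resolutions $\phi\colon W\to Y$, $\psi\colon W\to X$. Choosing canonical divisors compatibly, so that $\phi_*K_W=K_Y$ and $\psi_*K_W=K_X$, I would write $\phi^*(K_Y+B_Y^+)=K_W+\Gamma$ and set $B^+\defeq\psi_*\Gamma$, so that $K_X+B^+=\psi_*(K_W+\Gamma)$. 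Because $k(K_Y+B_Y^+)\sim 0$, pulling back gives $k(K_W+\Gamma)\sim 0$, and pushing forward along the birational morphism $\psi$, which sends principal divisors to principal divisors, gives $k(K_X+B^+)\sim 0$; in particular $K_X+B^+$ is $\bQ$-Cartier, which is condition (2). (Note $K_X+B$ is already $\bR$-Cartier, since $(X,B)$ is a pair, so $\psi^*(K_X+B)$ makes sense.)

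The heart of the argument is to show that $(X,B^+)$ is crepant to $(Y,B_Y^+)$. Consider $\Lambda\defeq\psi^*(K_X+B^+)-(K_W+\Gamma)$ on $W$. By construction $\psi_*\Lambda=0$, and $\Lambda$ is numerically trivial over $X$: the term $\psi^*(K_X+B^+)$ tautologically, and $K_W+\Gamma=\phi^*(K_Y+B_Y^+)$ because it is $\bQ$-linearly trivial. Hence both $\Lambda$ and $-\Lambda$ are $\psi$-nef, so the negativity lemma, applied in both directions, forces $\Lambda=0$; thus $\psi^*(K_X+B^+)=\phi^*(K_Y+B_Y^+)$, and $a(E,X,B^+)=a(E,Y,B_Y^+)$ for every divisorial valuation $E$. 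Therefore $(X,B^+)$ is $\epsilon$-lc, which is condition (1). Moreover, chaining $\psi^*(K_X+B^+)=\phi^*(K_Y+B_Y^+)\geq\phi^*(K_Y+B_Y)\geq\psi^*(K_X+B)$ — the first inequality from $B_Y^+\geq B_Y$ and the second from condition (3) in the definition of $\mathcal{Q}$ — and pushing forward along $\psi$ yields $B^+\geq B\geq 0$.

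For the last condition, $kB^+\geq k\floor{B}+\floor{(k+1)\{B\}}$, I would argue component by component. Let $D\subset X$ be prime with $b=\mult_D B$, $b^+=\mult_D B^+$, and let $D_W=\psi_*^{-1}D$; by crepancy $1-b^+=a(D_W,Y,B_Y^+)$, while the $\mathcal{Q}$-inequality gives $1-b\geq a(D_W,Y,B_Y)$. If $D_W$ is not $\phi$-exceptional it is the birational transform of a prime divisor $D_Y\subset Y$, so $b^+=\mult_{D_Y}B_Y^+$ and $b\leq\mult_{D_Y}B_Y$; then the $k$-complement property on $Y$, namely $k\,\mult_{D_Y}B_Y^+\geq k\floor{\mult_{D_Y}B_Y}+\floor{(k+1)\{\mult_{D_Y}B_Y\}}$, together with the fact that $t\mapsto k\floor{t}+\floor{(k+1)\{t\}}$ is non-decreasing, gives the desired inequality at $D$. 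If $D_W$ is $\phi$-exceptional, then $(Y,B_Y)$ klt forces $b<1$, hence $\floor{b}=0$, and $b^+\in\tfrac1k\ZZ$ because $a(D_W,Y,B_Y^+)\in\tfrac1k\ZZ$ (using $k(K_Y+B_Y^+)\sim 0$ and smoothness of $W$); since also $b^+\geq b$, we get $b^+\geq\ceil{kb}/k\geq\floor{(k+1)b}/k$, the last step being the elementary inequality $\ceil{kb}\geq\floor{(k+1)b}$ valid for $0\leq b<1$. This completes the $k$-complement case, and the $\bR$-complement case is the same argument with $\sim_\bR 0$ in place of $\sim 0$, where now only condition (1) and $B^+\geq B$ are at stake.

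I expect the crux to be the crepancy identity $\Lambda=0$: it is what both transports $\epsilon$-lc-ness from $Y$ to $X$ and pins $B^+$ down precisely enough to run the numerical comparison above; everything else is routine. A subtler point is the $\phi$-exceptional case of the last condition, where one really needs the integrality $b^+\in\tfrac1k\ZZ$, not merely $b^+\geq b$.
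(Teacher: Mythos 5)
Your proof is correct and follows the same route as the paper's: define $B^+$ as the crepant pullback of $B_Y^+$ through the common resolution, check $\epsilon$-lc-ness and $B^+\geq B$ via the crepancy identity, and push $k(K_Y+B_Y^+)\sim 0$ forward. You spell out the integrality verification for the inequality $kB^+\geq k\floor{B}+\floor{(k+1)\{B\}}$, which the paper's one-line proof leaves implicit; note that your case split there is not actually needed, since in both cases one has $kb^+\in\ZZ$ (from $k(K_Y+B_Y^+)\sim 0$) and $b<1$ (from $(Y,B_Y)$ klt and condition (3) of $\mathcal{Q}$), so the elementary bound $kb^+\geq\ceil{kb}\geq\floor{(k+1)b}$ applies uniformly.
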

\begin{proof}
Let $K_X+B^+$ be the crepant pullback of $K_Y+B_Y^+$ to $X$. Then $(X,B^+)$ is $\epsilon$-lc. Since $k(K_Y+B^+_Y)\sim 0$ (resp. $K_Y+B^+_Y\sim_{\bR}0$) and $\phi^*(K_Y+B_Y)\geq\psi^*(K_X+B)$, $B^+-B\geq\psi_*\phi^*(B_Y^+-B_Y)\geq 0$ and $K_X+B^+$  is a strong $k$-complement (resp. $\bR$-complement) of $K_X+B$.
\end{proof}

\begin{lem}\label{alpineq}
Let $(X,B)$ be a projective lc weak Fano pair with $\alpha(X,B)\leq 1$. Suppose $\phi:X\dashrightarrow Y$ is a partial MMP (of any divisor). Let $B_Y=\phi_*(B)$. Then $(Y,B_Y)$ is lc and $\alpha(Y,B_Y)\geq\alpha(X,B)$. If moreover $(X,B)$ is klt, then $(Y,B_Y)$ is also klt.
\end{lem}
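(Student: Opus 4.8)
The plan is to compare $(X,B)$ and $(Y,B_Y)$ on a common log resolution; the essential point is that, although $\phi$ is an MMP run for a divisor unrelated to $K_X+B$ (so that $-(K_{X_i}+B_i)$ need not remain nef after the first step), the nefness of $-(K_X+B)$ on the source already controls everything, \emph{via the negativity lemma applied to $\phi$ as a whole} rather than step by step. Concretely: choose a common log resolution $p\colon W\to X$, $q\colon W\to Y$ resolving $(X,B)$ and $(Y,B_Y)$, and set $G\defeq p^*(K_X+B)-q^*(K_Y+B_Y)$. Since $\phi$ is a birational contraction, every $q$-nonexceptional prime divisor of $W$ is the strict transform of a prime divisor of $Y$, hence of the corresponding prime divisor of $X$, on which $B$ and $B_Y=\phi_*B$ have the same coefficient; so $G$ is $q$-exceptional and $q_*G=0$. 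On the other hand $-(K_X+B)$ nef forces $p^*(-(K_X+B))$ nef on $W$, hence $q$-nef, while $q^*(K_Y+B_Y)$ is $q$-trivial, so $-G=p^*(-(K_X+B))+q^*(K_Y+B_Y)$ is $q$-nef; by the negativity lemma $G\geq 0$. Writing $p^*(K_X+B)=K_W+B_W$ and $q^*(K_Y+B_Y)=K_W+B_W'$, this means $B_W'=B_W-G\leq B_W$, i.e. $a(\mathcal E,Y,B_Y)=a(\mathcal E,X,B)+\mult_{\mathcal E}G\geq a(\mathcal E,X,B)$ for every divisor $\mathcal E$ over $X$; hence $(Y,B_Y)$ is lc (resp. klt) and log discrepancies do not decrease. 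Moreover $-(K_Y+B_Y)=\phi_*(-(K_X+B))$ is $\bR$-linearly equivalent to an effective divisor, so $\alpha(Y,B_Y)$ is defined.

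It remains to prove $\alpha(Y,B_Y)\geq\alpha(X,B)$. Using $\alpha(Z,\Delta)=\inf_{\mathcal E, M}a(\mathcal E,Z,\Delta)/\operatorname{ord}_{\mathcal E}(M)$ (the infimum over divisors $\mathcal E$ over $Z$ and effective $M\sim_{\bR}-(K_Z+\Delta)$), it suffices to fix an effective $M_Y\sim_{\bR}-(K_Y+B_Y)$ and a divisor $\mathcal E$ over $X$ and show $a(\mathcal E,Y,B_Y)\geq\alpha(X,B)\operatorname{ord}_{\mathcal E}(M_Y)$. Put $\gamma\defeq\mult_{\mathcal E}G=a(\mathcal E,Y,B_Y)-a(\mathcal E,X,B)\geq 0$. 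The core of the argument is to transfer $M_Y$ to the source: decomposing $\phi$ into divisorial contractions and flips and using the relation $K_{X_i}+B_i=f^*(K_{X_{i+1}}+B_{i+1})+a_iE_i$ at each divisorial contraction, one produces an effective $M_X\sim_{\bR}-(K_X+B)$ with $\operatorname{ord}_{\mathcal E}(M_X)\geq\operatorname{ord}_{\mathcal E}(M_Y)-\gamma$. Granting this, $a(\mathcal E,X,B)\geq\alpha(X,B)\operatorname{ord}_{\mathcal E}(M_X)$ by the very definition of $\alpha(X,B)$, and therefore
\begin{align*}
a(\mathcal E,Y,B_Y)=a(\mathcal E,X,B)+\gamma &\geq \alpha(X,B)\bigl(\operatorname{ord}_{\mathcal E}(M_Y)-\gamma\bigr)+\gamma\\
&=\alpha(X,B)\operatorname{ord}_{\mathcal E}(M_Y)+\bigl(1-\alpha(X,B)\bigr)\gamma\;\geq\;\alpha(X,B)\operatorname{ord}_{\mathcal E}(M_Y),
\end{align*}
the last inequality being exactly where the hypothesis $\alpha(X,B)\leq 1$ enters (together with $\gamma\geq 0$). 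Taking infima over $\mathcal E$ and $M_Y$ yields $\alpha(Y,B_Y)\geq\alpha(X,B)$.

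The main obstacle is precisely the construction of the transfer $M_X$, i.e. bounding how far the order of vanishing along $\mathcal E$ can drop when passing from $|-(K_Y+B_Y)|_{\bR}$ to $|-(K_X+B)|_{\bR}$. The naive candidate $p_*(q^*M_Y-G)$ is $\bR$-linearly equivalent to $-(K_X+B)$, but it may fail to be effective along a $\phi$-exceptional prime divisor $E$ exactly when $\operatorname{ord}_E(M_Y)<\mult_{E'}G$ (here $E'$ is the strict transform of $E$ on $W$), that is, when $M_Y$ has small order along the $\phi$-image of $E$. One circumvents this: for such an $E$ one has $a(E,Y,B_Y)\geq\mult_{E'}G>\operatorname{ord}_E(M_Y)$ (because $a(E,X,B)\geq 0$), so $a(E,Y,B_Y)/\operatorname{ord}_E(M_Y)>1\geq\alpha(X,B)$ and $E$ cannot be a divisor computing $\lct((Y,B_Y),M_Y)$; hence it is enough to establish the inequality above for those $\mathcal E$ along which $M_Y$ already vanishes to order at least $\gamma$, and for these the effective transfer does exist. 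Carrying this out rigorously — in particular keeping track of the contribution of every exceptional divisor of the resolution through the successive divisorial-contraction and flip steps — is the one genuinely technical part of the argument.
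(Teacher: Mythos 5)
Your proof is built on the same basic mechanism as the paper's — negativity lemma to get $G\defeq p^*(K_X+B)-q^*(K_Y+B_Y)\geq 0$, then transfer a member $M_Y\in|-(K_Y+B_Y)|_{\bR}$ to the source via crepant pullback — and your explicit identification that the hypothesis $\alpha(X,B)\leq 1$ is needed to absorb the term $(1-\alpha(X,B))\gamma$ is exactly where the paper uses it (to conclude $-(1-t)(K_X+B)$ is nef and big and to run its connectedness/complement argument). But you have put your finger on a real problem, and in doing so you have actually located a gap in the \emph{paper's own} proof. The paper writes the crepant pullback $K_X+B+M$ of $K_Y+B_Y+M_Y$ and asserts ``as in the proof of Lemma~\ref{qfact}, we have $M\in|-(K_X+B)|_{\bR}$'', i.e.\ that $M\geq 0$. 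But the argument in Lemma~\ref{qfact} requires the inequality $q^*(K_Y+B_Y)\geq p^*(K_X+B)$, whereas the negativity lemma applied here (because $-(K_X+B)$ is nef) gives the \emph{opposite} inequality $p^*(K_X+B)\geq q^*(K_Y+B_Y)$. Concretely, $p^*M=q^*M_Y-G$ and $\mult_E M=\operatorname{ord}_E(M_Y)-\mult_{E'}G$ can be negative along a $\phi$-exceptional prime divisor $E$; already for $\phi\colon Bl_p\bP^d\to\bP^d$ with $B=0$ and $M_Y$ avoiding $p$, one gets $\mult_E M=-(d-1)<0$. So the paper's assertion ``$M\in|-(K_X+B)|_{\bR}$'' is simply false in general, and everything it deduces from it ($(X,B+tM)$ lc, etc.) needs a different justification.

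That said, your fix is not complete either, and it is worth being precise about where it stops. Your Case~1 (any $\mathcal E$ with $\operatorname{ord}_{\mathcal E}(M_Y)<\gamma$) is genuinely fine: then $a(\mathcal E,Y,B_Y)\geq\gamma>\operatorname{ord}_{\mathcal E}(M_Y)\geq\alpha(X,B)\operatorname{ord}_{\mathcal E}(M_Y)$, exactly as you say, and this works for all divisorial valuations, not only prime divisors of $X$. But in Case~2 the key claim — ``for these the effective transfer does exist'', i.e.\ there is an \emph{effective} $M_X\sim_{\bR}-(K_X+B)$ with $\operatorname{ord}_{\mathcal E}(M_X)\geq\operatorname{ord}_{\mathcal E}(M_Y)-\gamma$ — is stated, not proved, and it is not at all automatic. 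The naive candidate $p_*(q^*M_Y-G)$ need not be effective even for a $\mathcal E$ with $\operatorname{ord}_{\mathcal E}(M_Y)\geq\gamma$, because non-effectivity can occur along a \emph{different} $\phi$-exceptional divisor; and, in general, there need not exist any effective member of $|-(K_X+B)|_{\bR}$ matching the order of vanishing of a non-effective member along a prescribed valuation (e.g.\ on $\bP^2$, a member of $|-K_{\bP^2}|_{\bR}$ of the form $3L_1+L_2-L_3$ has order $4$ along the blow-up of $L_1\cap L_2$, unattainable by any effective anticanonical $\bR$-divisor). Whether the MMP structure of $\phi$ and the constraint $p^*M\geq -G$ force the needed effective representative to exist is exactly the point you flag as ``the one genuinely technical part,'' and it is a genuine gap: without it the reduction to $\alpha(X,B)$ via $a(\mathcal E,X,B)\geq\alpha(X,B)\operatorname{ord}_{\mathcal E}(M_X)$ has nothing to apply to. So: your diagnosis is sharper and more honest than the paper's proof, but the proof as written is not complete, and in fact neither is the paper's.
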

\begin{proof}
Since (X,B) is weak Fano, there is an $\bR$-complement $K_X+\overline{B}$ of $K_X+B$.
Let $\overline{B}_Y=\phi_*(\overline{B})$, then $K_Y+\overline{B}_Y$ is an $\bR$-complement of $K_Y+B_Y$. In particular, $(Y,B_Y)$ is lc and $|-(K_Y+B_Y)|_{\bR}$ is non-empty.
Let $t=\alpha(X,B)$. 
Now we consider the following argument for any $M_Y\in |-(K_Y+B_Y)|_{\bR}$. Let $K_X+B+M$ be the crepant pullback of $K_Y+B_Y+M_Y$ to $X$. As in the proof of Lemma \ref{qfact}, we have $M\in |-(K_X+B)|_{\bR}$ and $\phi_*(M)=M_Y$.
By assumption, $(X,B+tM)$ is lc and $-(K_X+B+tM)\sim_{\bR} -(1-t)(K_X+B)$ is nef and big if $t\neq 1$. So there is an effective divisor $G$ on $X$ such that $(X,B+tM+G)$ is lc Calabi--Yau. Let $G_Y=\phi_*(G)$. Then $(Y,B_Y+tM_Y+G_Y)$ is an $\bR$-complement of $K_Y+B_Y+tM_Y$. In particular, $(Y,B_Y+tM_Y)$ is lc. Therefore, we have $\alpha(Y,B_Y)\geq t$. If moreover $(X,B)$ is klt, then we may assume $(X,\overline{B})$ is klt and therefore $(Y,B_Y)$ is klt.
\end{proof}

Next, we recall the following proposition of Birkar with a small observation.
\begin{prop}[{\cite[Proposition 4.4]{Bir16a}}]\label{birkar2}
Fix positive integers $d$, $v$ and a positive real number $\epsilon$. Then there exists a bounded set of couples $\mathcal{P}$ and a positive real number $c$ depending only on $d$, $v$ and $\epsilon$ satisfies the following. Assume 
\begin{itemize}
	\item $X$ is a normal projective variety of dimension $d$,
	\item $B$ is an effective $\bR$-divisor with coefficient at least $\epsilon$,
	\item $M$ is a $\bQ$-divisor with $|M|\defeq|\floor{M}|$ defining a birational map,
	\item $M-(K_X+B)$ is pseudo-effective,
	\item $\mathrm{vol}(M)<v$, and
	\item $\mu_D(B+M)\geq 1$ for every component $D$ of $M$.
\end{itemize}
Then there is a projective log smooth couple $(\overline{W}, \Sigma_{\overline{W}})\in \mathcal{P}$, a birational map $\overline{W}\dashrightarrow X$ and a common resolution $X'$  of this map such that
\begin{enumerate}
\item Supp$\Sigma_{\overline{W}}$ contains the exceptional divisor of $\overline{W}\dashrightarrow X$ and the birational transform of Supp$(B+M)$, and
\item there is a resolution $\phi : W \rightarrow X$ such that $M_W\defeq \phi^*M\sim A_W+R_W$ where $A_W$ is the movable part of $|M_W|$, $|A_W|$ is base point free, $\psi:X'\rightarrow X$ factors through $W$ and $A_{X'}\sim 0/\overline{W}$, where $A_{X'}$ is the pullback of $A_W$ on $X'$.
\end{enumerate}
Moreover, if M is nef and $M_{\overline{W}}$ is the pushdown of $M_{X'}\defeq \psi^*M$. Then each coefficient of $M_{\overline{W}}$ is at most $c$.
\end{prop}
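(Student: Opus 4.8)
The conclusion through item (2) is exactly \cite[Proposition 4.4]{Bir16a}, so the plan is to quote that argument for the bulk and supply only the final ``Moreover'' clause, which is the new observation. For the first part I would recall the shape of Birkar's proof. Pass to a resolution $\phi\colon W\to X$ realising the mobile--fixed decomposition $|\phi^*M|=|A_W|+R_W$, with $A_W$ base point free and $R_W\ge 0$ the fixed part. Since $|M|$, hence $|A_W|$, defines a birational map and $A_W$ is nef, the morphism attached to $|A_W|$ sends $W$ birationally onto a subvariety whose degree is $A_W^d=\mathrm{vol}(A_W)\le\mathrm{vol}(M)<v$; after a general linear projection the ambient projective space may also be taken of bounded dimension, so this image, and with it a birational model of $X$, varies in a bounded family. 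One then bounds the number of components of $B+M$: the components of $M$ are controlled because $\mu_D(B+M)\ge 1$ forces a positive lower bound on $A_W^{d-1}\cdot\phi^*D$ against the bounded class $A_W$, while the components of $B$ are controlled because $B$ has coefficients $\ge\epsilon$ and $M-(K_X+B)$ is pseudo-effective, so that $A_W^{d-1}\cdot B\le A_W^{d-1}\cdot(M-K_X)$ is bounded. Taking a common log resolution $\overline W$ of this image together with the birational transform of $\mathrm{Supp}(B+M)$ and all the exceptional divisors in play produces a log smooth couple $(\overline W,\Sigma_{\overline W})$ ranging in a bounded set $\mathcal P$, and item (1) holds by construction; item (2) is arranged by choosing $X'$ to be a common resolution that factors as $X'\to W\to X$ and on which the pullback $A_{X'}$ of $A_W$ becomes trivial over $\overline W$, which is possible because $A_W$ is base point free and $\overline W$ dominates the image of $|A_W|$.

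For the ``Moreover'' clause, assume in addition that $M$ is nef; then $M_{X'}=\psi^*M$ is nef and $(M_{X'})^d=\mathrm{vol}(M)<v$. Let $\rho\colon X'\to\overline W$ be the induced morphism, so $M_{\overline W}=\rho_*M_{X'}$ is supported on $\Sigma_{\overline W}$. Using boundedness of $\mathcal P$, fix a very ample divisor $H$ on $\overline W$ with $H^d$ and $H^{d-1}\cdot\Sigma_{\overline W}$ bounded in terms of $d,v,\epsilon$; then for a component $S$ of $M_{\overline W}$ realising the maximal coefficient $c$ one has $c\le c\,(H^{d-1}\cdot S)\le H^{d-1}\cdot M_{\overline W}=(\rho^*H)^{d-1}\cdot M_{X'}$. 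I would bound the right-hand side by writing $M_{X'}\sim A_{X'}+R_{X'}$ with $A_{X'}\sim\rho^*A_{\overline W}$ (using $A_{X'}\sim 0/\overline W$) and $R_{X'}\ge 0$ the pullback of $R_W$: the term $(\rho^*H)^{d-1}\cdot A_{X'}=H^{d-1}\cdot A_{\overline W}$ is bounded because $A_{\overline W}$ is a base point free big divisor with $A_{\overline W}^d=\mathrm{vol}(A_W)<v$ on the bounded $\overline W$, and the contribution of $R_{X'}$ is bounded using that $M_{X'}$ is nef — for instance via the negativity lemma applied to the $\rho$-exceptional difference $M_{X'}-\rho^*M_{\overline W}$, which yields $M_{X'}\le\rho^*M_{\overline W}$, combined with $(M_{X'})^d<v$ and boundedness of $(\overline W,A_{\overline W})$. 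This gives $c\le c(d,v,\epsilon)$.

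The step I expect to be the main obstacle is precisely this last estimate: bounding $(\rho^*H)^{d-1}\cdot M_{X'}$, equivalently controlling the fixed part $R_W$ after transport to the bounded $\overline W$. The crude bound $R_{\overline W}\le M_{\overline W}$ is circular, so one must genuinely exploit nefness of $M$ — comparing $M_{X'}$ with the bounded semiample class $\rho^*H$ on the components over which $M_{X'}$ is not $\rho$-exceptional, and using $(M_{X'})^d<v$ on the remaining components — which is exactly the reason the clause is stated only for nef $M$. Everything else is the bookkeeping already carried out in \cite[Proposition 4.4]{Bir16a}.
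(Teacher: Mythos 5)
You have the division of labour between what is cited and what is new backwards. In Birkar's Proposition~4.4 of \cite{Bir16a}, $M$ is assumed to be nef \emph{throughout}, and under that hypothesis Birkar proves all three things: items (1), (2), \emph{and} the coefficient bound $c$. The ``Moreover'' clause is therefore \emph{exactly} Birkar's conclusion, not a new observation; by contrast, items (1) and (2) as stated here (with $M$ not assumed nef) are \emph{not} verbatim Birkar's proposition. The paper does not reprove anything: its entire ``proof'' is the one-line remark, appearing right after the statement, that an inspection of Birkar's argument shows nefness of $M$ is used only in establishing the bound $c$ and not in establishing (1) and (2). So the task is a careful reading of \cite[Proposition~4.4]{Bir16a}, not a reconstruction of the $c$-bound. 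Your sketch of (1) and (2) is consistent in spirit with that remark (it does not seem to invoke nefness), but you present it as a verbatim quotation of Birkar rather than as the observation the paper is actually making.

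Regarding your attempted proof of the ``Moreover'' clause: besides being unnecessary, it is incomplete, and you acknowledge the gap yourself. The chain $c\le H^{d-1}\cdot M_{\overline W}=(\rho^*H)^{d-1}\cdot M_{X'}$ is fine, but the proposed bound on the right-hand side does not close. Writing $M_{X'}\sim A_{X'}+R_{X'}$, the assertion that $H^{d-1}\cdot A_{\overline W}$ is bounded just because $A_{\overline W}$ is nef and big with $A_{\overline W}^d<v$ on the bounded $\overline W$ is not a formal consequence of Khovanskii--Teissier (which gives lower, not upper, bounds on mixed intersections), and the suggested negativity-lemma comparison $M_{X'}\le\rho^*M_{\overline W}$ leads exactly back to $H^{d-1}\cdot M_{\overline W}$, i.e.\ to the quantity you are trying to bound. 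Birkar's actual estimate uses additional input beyond what you invoke (in particular the control on $R_W$ coming from $M-(K_X+B)$ pseudo-effective and the coefficient hypotheses on $B$ and $M$). If you wish to include a proof of the $c$-bound rather than cite it, you would need to reproduce that part of Birkar's argument rather than improvise it.
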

Note that in the original statement of {\cite[Proposition 4.4]{Bir16a}}, $M$ is assumed to be nef. We observe from Birkar's proof that the nefness of $M$ is used only when showing the existence of $c$ and is not necessary when showing (1) and (2) of proposition \ref{birkar2}.

Now we are ready to show the main theorem of this paper.
The idea is to follow the strategy of {\cite[Proposition 7.13]{Bir16a}}, which is to construct a klt complement with coefficients in a finite set depending only on $d$, $\theta$ and $\mathscr{R}$, and then apply Theorem \ref{hx}.


\begin{thm}\label{thm4.2}
Fix a positive integer $d$, a positive real number $\theta$ and a closed DCC set $\mathscr{S}$ of rational numbers in $[0,1]$. The set $\mathcal{D}$ of all klt weak Fano pairs $(X,B)$ satisfying
\begin{enumerate}
\item $\dim X=d$,
\item the coefficients of $B\in\mathscr{S}$,
\item $\alpha(X,B)>\theta$, and
\item $(-(K_X+B))^d>\theta$
\end{enumerate}
forms a log bounded family.
Moreover, there is a rational number $k$ depending only on $d$, $\theta$ and $\mathscr{S}$, such that every element $(X,B)\in\mathcal{D}$, there is a strong klt $k$-complement $K_X+\Theta$ of $K_X+B$.
\end{thm}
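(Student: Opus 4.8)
The plan is to produce, for every $(X,B)\in\mathcal{D}$, a strong klt complement $K_X+\Theta$ of $K_X+B$ whose boundary $\Theta$ is big and has coefficients in a finite set depending only on $d$, $\theta$ and $\mathscr{S}$, and then to invoke Theorem \ref{hx} to pass from the resulting birational boundedness to log boundedness. The two invariants enter only through two observations: applying Lemma \ref{kollar} with $H=-(K_X+B)$ gives $\alpha(X,B)^d(-(K_X+B))^d\le d^d$, hence $(-(K_X+B))^d<d^d/\theta^d$, so the volume of $-(K_X+B)$ is bounded from \emph{above}; and, once an $n$-complement is available, the hypotheses are exactly what Corollary \ref{cor2} needs.

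I would first reduce to the case where $X$ is $\bQ$-factorial by a small $\bQ$-factorialization, which preserves being klt weak Fano, the $\alpha$-invariant (Lemma \ref{alpineq}), the volume and the coefficients of $B$, and along which a complement pushes forward. Now $X$ is of Fano type, $-(K_X+B)$ is nef, and $(X,B)$ has an $\bR$-complement; since the coefficients of $B$ lie in the closed DCC set $\mathscr{S}$ of rationals, Theorem \ref{FM} gives a positive integer $n=n(d,\mathscr{S})$ and an $n$-complement $K_X+B^+$ of $K_X+B$ with $B^+\ge B$, so $(X,B^+)$ is lc log Calabi--Yau, $B^+$ has coefficients in $\frac1n\mathbb{Z}$, and $B^+-B\ge 0$ represents an element of $|-(K_X+B)|_{\bR}$. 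By Corollary \ref{cor2} there is $m=m(d,n,\theta)$, which we may take $\ge n$, such that, setting $M\defeq\floor{m(B^+-B)}$, the linear system $|M|$ defines a birational map. This $M$ satisfies every hypothesis of Proposition \ref{birkar2} with boundary $B^+$: its volume is at most $m^d(-(K_X+B))^d<m^dd^d/\theta^d$, $M-(K_X+B^+)\sim_{\bQ}M\ge 0$ is pseudo-effective, the coefficient hypothesis holds with $\epsilon=1/n$, and $\mu_D(B^++M)\ge\mu_D(M)\ge 1$ for every component $D$ of $M$ because $M$ is a round-down. Hence Proposition \ref{birkar2} produces a bounded set $\mathcal{P}$ of log smooth couples depending only on $d$, $\theta$, $\mathscr{S}$, an element $(\overline{W},\Sigma_{\overline{W}})\in\mathcal{P}$ and a birational map $\overline{W}\dashrightarrow X$ such that $\Sigma_{\overline{W}}$ contains the exceptional divisors of $\overline{W}\dashrightarrow X$ and the birational transform of $\mathrm{Supp}(B^+)\supseteq\mathrm{Supp}(B)$.

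The core step is to upgrade this to a \emph{klt} complement with coefficients in a fixed finite set, for which I would combine the Han--Liu--Shokurov machinery with the bounded couple just obtained. Applying Theorems \ref{HLS} and \ref{g'} to the Fano type pair $(X,B)$ (which has an $\bR$-complement and boundary coefficients in $\mathscr{S}$) yields a number $k$, finite sets $\Gamma_1,\Gamma_2$, a divisor $B'=\sum_j g(b_j)B_j\ge B$ with coefficients in $\Gamma_1$ such that $(X,B')$ is lc with an $\bR$-complement, and, for each relevant $i$, a boundary $D_i=\sum_j g'_i(b_j)B_j$ with $B\le D_i\le\frac{B+3B'}{4}$; since log discrepancies are affine in the boundary and $(X,B)$ is klt while $(X,B')$ is lc, each $(X,D_i)$ is klt, and one obtains a $(k,\Gamma_1,\Gamma_2)$-complement $K_X+\Theta$ of $K_X+B$ with $\Theta\ge B$ and coefficients in a finite set. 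To guarantee that the building blocks of Definition \ref{HLScomplement} may be taken klt, so that the complement itself is klt, I would crepant pull back to the bounded couple $(\overline{W},\Sigma_{\overline{W}})$: on the bounded log smooth family $\mathcal{P}$ one constructs, uniformly, a klt weak Fano pair crepantly dominating $(X,B)$ and a strong klt $k$-complement of it, and Lemma \ref{qfact} (with Lemma \ref{alpineq} handling the birational transition) carries this back to a strong klt $k$-complement $K_X+\Theta$ of $K_X+B$. In either description one arrives at $K_X+\Theta$ with $(X,\Theta)$ klt, $\Theta\ge B$, $K_X+\Theta\equiv 0$, $\Theta$ big (since $\Theta-B\equiv-(K_X+B)$ is big), and coefficients of $\Theta$ in a finite set depending only on $d$, $\theta$, $\mathscr{S}$.

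Finally, $(X,\Theta)$ is a klt log Calabi--Yau pair of dimension $d$ with big boundary whose coefficients lie in a fixed finite, hence DCC, set, so Theorem \ref{hx} shows that these pairs form a bounded family; therefore the varieties $X$ are bounded, and since $\mathrm{Supp}(B)\subseteq\mathrm{Supp}(\Theta)$ the couples $(X,\mathrm{Supp}(B))$, and hence the pairs $(X,B)$, are log bounded. I expect the real difficulty to lie in the third paragraph: producing a klt, rather than merely lc, complement whose coefficients are governed by a finite set, uniformly over the family. This is exactly the point where one has to use both the volume bound coming from $\alpha(X,B)>\theta$ (through Lemma \ref{kollar}, which feeds Proposition \ref{birkar2}) and the finiteness built into the refined Han--Liu--Shokurov complements of Theorem \ref{g'}.
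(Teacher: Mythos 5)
Your first two paragraphs match the paper's proof closely: reduction to $\bQ$-factorial, $n$-complement from Theorem \ref{FM}, $m$ from Corollary \ref{cor2}, the upper bound on $\mathrm{vol}(M)$ via Lemma \ref{kollar}, and Proposition \ref{birkar2} producing the bounded couple $(\overline{W},\Sigma_{\overline{W}})$. The third paragraph, which you yourself flag as the crux, is where a genuine gap appears.

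You propose to get the klt complement by invoking Theorems \ref{HLS} and \ref{g'}, but those theorems only furnish \emph{lc} $(n,\Gamma_1,\Gamma_2)$-complements; the fact that each $(X,D_i)$ is klt does not make the complement of $(X,D_i)$ klt. You acknowledge this and then say that ``on the bounded log smooth family $\mathcal{P}$ one constructs, uniformly, a klt weak Fano pair crepantly dominating $(X,B)$ and a strong klt $k$-complement of it,'' but no such construction is given, and it is not clear how one would make it: the crepant pullback of $(X,B^+)$ to $\overline{W}$ is a \emph{sub}-pair (with possibly negative coefficients), and $(\overline{W},B^+_{\overline{W}})$ is neither weak Fano nor effective, so there is nothing to directly apply Theorem \ref{FM} or \ref{HLS} to on $\overline{W}$. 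Moreover, in the paper Theorems \ref{HLS} and \ref{g'} are not used in the proof of Theorem \ref{thm4.2} at all; they enter only in Theorem \ref{DCCmain}, where they serve a different purpose, namely passing from real DCC coefficients to rational DCC coefficients.

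The paper's actual construction of the klt complement, which is the missing idea, goes as follows. Because $(\overline{W},\Sigma_{\overline{W}})$ ranges in a bounded family and $A_{\overline{W}}\le\Sigma_{\overline{W}}$, there is an integer $l$ depending only on $\mathcal{P}$ with $lA_{\overline{W}}\sim G_{\overline{W}}\ge 0$ and $\Supp G_{\overline{W}}\supseteq\Sigma_{\overline{W}}$. Pushing down to $X$ one gets $lA\sim G$. Now one produces two log Calabi--Yau boundaries on $X$: (i) a \emph{sub-klt} boundary $\Delta\defeq B^++\frac{t}{m}A-\frac{t}{lm}G$, obtained by crepant pullback of $\Delta_{\overline{W}}\defeq B^+_{\overline{W}}+\frac{t}{m}A_{\overline{W}}-\frac{t}{lm}G_{\overline{W}}$, which is sub-$2\epsilon$-klt uniformly over $\mathcal{P}$ because $\Supp B^+_{\overline{W}}\subseteq\Sigma_{\overline{W}}\subseteq\Supp G_{\overline{W}}$, $(\overline{W},\Sigma_{\overline{W}})$ is log smooth, and $A_{\overline{W}}$ is general (not a component of $\ceil{B^+_{\overline{W}}}$); and (ii) an \emph{lc} boundary $\Omega\geq B+t(G+lR+l\{m(B^+-B)\})$ obtained by applying Theorem \ref{FM} again to the klt weak Fano pair $(X,B+t(G+lR+l\{m(B^+-B)\}))$, whose coefficients lie in a DCC set of rationals determined by $d,\theta,\mathscr{S},l,m,t$. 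Averaging, $\Theta\defeq\frac12\Delta+\frac12\Omega$ is effective (the positive contribution $\frac{t}{2}G$ from $\Omega$ cancels the negative $-\frac{t}{2lm}G$ from $\Delta$), klt, log Calabi--Yau, satisfies $\Theta\ge B$, and has coefficients in a fixed finite set. This averaging of a sub-klt CY sub-boundary against an lc CY boundary is precisely the step your sketch omits, and without it one only obtains an lc complement, which is not enough to apply Theorem \ref{hx}.
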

\begin{proof}
By Theorem \ref{hx}, it is enough to show the existence of $k$.

By Lemma \ref{qfact}, replacing by a small $\bQ$-factorialisation of $X$, we may assume $X$ is $\bQ$-factorial.

By Theorem \ref{FM}, there is an $n$-complement $K_X+B^+$ of $K_X+B$ such that $B^+\geq B$.

Let $m$ be given by Corollary \ref{cor2} such that  $|\floor{m(B^+-B)}|$ defines a birational map. Replacing $n$ and $m$ by $2mn$, we may assume $n=m>1$. On the other hand, by Lemma \ref{kollar}, vol$(\floor{m(B^+-B)})\leq$vol$(m(B^+-B))<v$ for some $v$ depending only on $d$, $m$ and $\theta$.

Let $M$ be a general element of $|\floor{m(B^+-B)}|$. By Proposition \ref{birkar2}, there is a bounded set of couples $\mathcal{P}$ depending only on $d$, $m$ and $\theta$, such that there is a projective log smooth couple $(\overline{W}, \Sigma_{\overline{W}})\in \mathcal{P}$, a birational map $\overline{W}\dashrightarrow X$ and a common resolution $X'$  of this map such that
\begin{enumerate}
\item Supp$\Sigma_{\overline{W}}$ contains the exceptional divisor of $\overline{W}\dashrightarrow X$ and the birational transform of Supp$(B^++M)$, and
\item there is a resolution $\phi : W \rightarrow X$ such that $M_W\defeq \phi^*M\sim A_W+R_W$ where $A_W$ is the movable part of $|M_W|$, $|A_W|$ is base point free, $X'\rightarrow X$ factors through $W$ and $A_{X'}\sim 0/\overline{W}$, where $A_{X'}$ is the pullback of $A_W$ on $X'$.
\end{enumerate}
Since $M$ is a general element of $|\floor{m(B^+-B)}|$, we may assume $M_{W}=A_{W}+R_W$ and $A_{W}$ is general in $|A_{W}|$. 
In particular, if $A_{\overline{W}}$ is the pushdown of $A_{W}|_{X'}$ to $\overline{W}$, then $A_{\overline{W}}\leq\Sigma_{\overline{W}}$.
Let $M$, $A$, $R$ be the pushdowns of $M_W$, $A_W$, $R_W$ to $X$.



Since $|A_{\overline{W}}|$ defines a birational contraction and $A_{\overline{W}}\leq\Sigma_{\overline{W}}$, there exists $l\in \mathbb{N}$ depending only on $\mathcal{P}$ such that $lA_{\overline{W}}\sim G_{\overline{W}}$ for some $G_{\overline{W}}\geq 0$ whose support contains $\Sigma_{\overline{W}}$. Let $K_{\overline{W}}+B^+_{\overline{W}}$ be the crepant pullback of $K_X+B^+$ to ${\overline{W}}$. Then $(\overline{W},B^+_{\overline{W}})$ is sub-lc and
$$\text{Supp}B^+_{\overline{W}}\subseteq\text{Supp}\Sigma_{\overline{W}}\subseteq \text{Supp}G_{\overline{W}}.$$
Let $G$ be the pushdown of $G_{X'}\defeq G_{\overline{W}}|_{X'}$ to $X$. 
Since $A_{X'}$ is the pullback of $A_{\overline{W}}$, we obtain $lA_{X'}\sim G_{X'}$ and $lA\sim G$. Therefore, $G+lR+l\{m(B^+-B)\}\sim_{\bQ}m(B^+-B)$.

Take a positive rational number $t\leq (lm)^{-d}\theta$, then \[(X,B+t(G+lR+l\{m(B^+-B)\}))\] is klt. Moreover, we have
\[-K_X-B-t(G+lR+l\{m(B^+-B)\})\sim_{\bQ}B^+-B-t(lm(B^+-B)).\]
By replacing $t$, we may assume $t<\frac{1}{lm}$.
Since 
\[B^+-B-t(lm(B^+-B))=(1-tlm)(B^+-B)\geq 0,\]
$B^+-B-t(lm(B^+-B))$ is nef and big. Therefore \[(X,B+t(G+lR+l\{m(B^+-B)\}))\] is klt weak Fano.

Now we argue that the coefficients of $B+t(G+lR+l\{m(B^+-B)\})$ are in a closed DCC set of rational numbers in $[0,1]$. It is clear that the coefficients of $B+t(G+lR+l\{m(B^+-B)\})$ are in a set of rational numbers in $[0,1]$, so it is enough to show that they are in a set with rational accumulation points.
Write \[B+tl\{m(B^+-B)\}=(1-tlm)B+tlmB^+-tl\floor{m(B^+-B)}.\]
Since $m$, $l$ and $t$ are determined by the fixed terms $d$, $\theta$ and $\mathscr{R}$, the coefficients of $tlmB^+-tl\floor{m(B^+-B)}+t(G+lR)$ are in a fixed finite rational set.
By assumption, the coefficients of $B$ are in a DCC set with rational accumulation points $\mathscr{R}$. Therefore the coefficients of $(1-tlm)B+tlmB^+-tl\floor{m(B^+-B)}+t(G+lR)$ are in a DCC set with rational accumulation points.

By Theorem \ref{FM}, there is a positive integer $n'$ depending only on $d$, $\mathscr{R}$ $m$, $l$ and $t$ such that there is an $n'$-complement $K_X+\Omega$ of $K_X+B+t(G+lR+l\{m(B^+-B)\})$, such that
\[\Omega\geq B+t(G+lR+l\{m(B^+-B)\}).\]

On the other hand, let
$$\Delta_{\overline{W}}\defeq B^+_{\overline{W}}+\frac{t}{m}A_{\overline{W}}-\frac{t}{lm}G_{\overline{W}}.$$
Then $(\overline{W},\Delta_{\overline{W}})$ is sub-$2\epsilon$-klt for some $\epsilon>0$ depending only on $\mathcal{P}$, $t$, $l$ and $m$ since $\mathrm{Supp}B^+_{\overline{W}}\subseteq\mathrm{Supp}\Sigma_{\overline{W}}\subseteq\mathrm{Supp}G_{\overline{W}}$, $(\overline{W}, \Sigma_{\overline{W}})$ is log smooth, $(\overline{W},B^+_{\overline{W}})$ is sub-lc, and $A_{\overline{W}}$ is not a component of $\ceil{B^+_{\overline{W}}}$. Moreover, $K_{\overline{W}}+\Delta_{\overline{W}}\sim_{\bQ} 0$.

Let
$$\Delta\defeq B^++\frac{t}{m}A-\frac{t}{lm}G.$$
Then $K_X+\Delta\sim_{\bQ} 0$. $(X,\Delta)$ is sub-klt since $K_X+\Delta$ is the crepant pullback of $K_{\overline{W}}+\Delta_{\overline{W}}$.

Let $\Theta=\frac{1}{2}\Delta+\frac{1}{2}\Omega$. Then
$$\Theta=\frac{1}{2}B^++\frac{t}{2m}A-\frac{t}{2lm}G+\frac{1}{2}\Omega$$
$$\geq\frac{1}{2}B^++\frac{t}{2m}A-\frac{t}{2lm}G+\frac{1}{2}B+\frac{t}{2}(G+lR)\geq-\frac{t}{2lm}G+\frac{t}{2}G\geq 0.$$

Since $(X,\Delta)$ is sub-$\epsilon$-klt, $K_X+\Delta\sim_{\bQ} 0$ and $(X,\Omega)$ is lc log Calabi--Yau, $(X,\Theta)$ is klt log Calabi--Yau. The coefficients of $\Theta$ belong to a fixed finite set depending only on $t$, $l$, $m$ and $n'$. Moreover, $\Theta\geq\frac{1}{2}B^++\frac{1}{2}\Omega\geq\frac{1}{2}B+\frac{1}{2}B=B$. 
Recall that $n(K_X+B^+)\sim 0$, $lA\sim G$ and $n'(K_X+\Omega)\sim 0$. Let $k$ be an integer such that $k\{\frac{1}{2n},\frac{t}{2lm},\frac{1}{2n'}\}\subseteq\mathbb{N}$, then $K_X+\Theta$ is a $k$-complement of $K_X+B$, with $(X,\Theta)$ $\epsilon$-lc and $\Theta\geq B$. It follows that $(X,\Theta)$ is $\frac{1}{k}$-lc
\end{proof}
\begin{thm}\label{DCCmain}
Fix a positive integer $d$, a positive real number $\theta$ and a DCC set $\mathscr{S}'$ of real numbers in $[0,1]$. The set $\mathcal{D}'$ of all klt weak Fano pairs $(X,B)$ satisfying
\begin{enumerate}
\item $\dim X=d$,
\item the coefficients of $B\in\mathscr{S}'$,
\item $\alpha(X,B)>\theta$, and
\item $(-(K_X+B))^d>\theta$
\end{enumerate}
forms a log bounded family.
Moreover, there is a rational number $k$, finite sets $S_1\subset [0,1]$ and $S_2\subset\bQ\cap[0,1]$, depending only on $d$, $\theta$ and $\mathscr{S}'$, such that for every element $(X,B)\in\mathcal{D}'$, there is a strong klt $(S_1,S_2,k)$-complement $K_X+\Theta$ of $K_X+B$.
\end{thm}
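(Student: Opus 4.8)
The plan is to re-run the argument proving Theorem~\ref{thm4.2} (that is, the argument of \cite{Ch18}), replacing its appeals to Theorem~\ref{FM} by the machinery of Theorems~\ref{ACCPELC}, \ref{HLS} and \ref{g'}, and then to present the resulting complement as a convex combination of honest $k$-complements in the sense of Definition~\ref{HLScomplement}.

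First I would carry out the combinatorial preparation. Put $\Gamma:=\overline{\mathscr{S}'}$, a closed DCC set, and apply Theorems~\ref{ACCPELC}, \ref{HLS} and \ref{g'} to $d$ and $\Gamma$: this produces, depending only on $d$ and $\mathscr{S}'$, a positive integer $n$, a finite set $\Gamma_1\subset[0,1]$, finite positive reals $a_1,\dots,a_r$ with $\sum_ia_i=1$, a projection $g\colon\Gamma\to\Gamma_1$, and functions $g_i'\colon\Gamma\to\mathscr{S}_i$ ($1\le i\le r$), each $\mathscr{S}_i$ a closed DCC set of rational numbers. Set $\mathscr{S}'':=\bigcup_{i=1}^r\mathscr{S}_i$, again closed DCC and rational. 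Now fix $(X,B)\in\mathcal{D}'$, $B=\sum_jb_jB_j$ with $b_j\in\mathscr{S}'$ and $B_j$ prime; by Lemma~\ref{qfact} we may pass to a small $\bQ$-factorialisation (which is crepant, so that $\alpha$, the volume and complements are unchanged) and assume $X$ is $\bQ$-factorial, so that each $B_j$ is $\bQ$-Cartier. As $(X,B)$ is klt weak Fano, $X$ is of Fano type and $(X,B)$ has an $\bR$-complement; hence Theorems~\ref{ACCPELC} and \ref{HLS} apply, and, writing $B^g:=\sum_jg(b_j)B_j$ and $B_i':=\sum_jg_i'(b_j)B_j$, we obtain that $(X,B^g)$ is lc, $-(K_X+B^g)$ is pseudo-effective, $(X,B^g)$ has an $\bR$-complement, and (by Theorem~\ref{g'}) the divisor $B_i'$ has coefficients in $\mathscr{S}_i$, satisfies $B\le\sum_ia_iB_i'$, and satisfies $B_i'\le\hat B\le B^g$, where $\hat B:=\tfrac14B+\tfrac34B^g$.

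The core step is to run, on each auxiliary pair $(X,B_i')$, the argument proving Theorem~\ref{thm4.2} for the rational DCC set $\mathscr{S}''$ and a constant $\theta'>0$ depending only on $d$ and $\theta$. Much of the needed input is at hand: $(X,B_i')$ is klt, since $B_i'\le\hat B$ and $(X,\hat B)$ — a convex combination of the klt pair $(X,B)$ and the lc pair $(X,B^g)$, so that $a(E,X,\hat B)=\tfrac14a(E,X,B)+\tfrac34a(E,X,B^g)>0$ for every valuation $E$ — is klt; $K_X+B_i'$ is $\bQ$-Cartier; $-(K_X+B_i')=\tfrac14(-(K_X+B))+\tfrac34(-(K_X+B^g))+(\hat B-B_i')$ is big, being a sum of a nef and big, a pseudo-effective, and an effective divisor; $(X,B_i')$ has an $\bR$-complement, inherited from one of $(X,B^g)$ because $B_i'\le B^g$; and the thresholds and volumes entering the argument are bounded below, since $a(E,X,B_i')\ge a(E,X,\hat B)\ge\tfrac14a(E,X,B)$ for every $E$ gives $\mathrm{lct}((X,B_i'),|-(K_X+B)|_{\bR})\ge\tfrac14\alpha(X,B)>\tfrac\theta4$, while $\mathrm{vol}(-(K_X+B_i'))\ge\mathrm{vol}(-(K_X+\hat B))\ge\tfrac1{4^d}(-(K_X+B))^d>\theta'$.

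I expect the main obstacle to be that $-(K_X+B_i')$ need not be nef, so $(X,B_i')$ is not literally a weak Fano pair and one must re-run the proof of Theorem~\ref{thm4.2} rather than invoke it as a black box. In that proof the nef and big anticanonical class is used only through the key Lemma~\ref{thm2} (via Corollary~\ref{cor2}) and Lemma~\ref{kollar}, and for each of these it suffices to supply some nef $\bQ$-Cartier $\bQ$-divisor $H$ with $\mathrm{lct}((X,B_i'),|H|_{\bR})$ and $H^d$ bounded below by constants depending only on $d$ and $\theta$; here $H$ may depend on $X$, since the constant $m$ it produces is uniform. Constructing such an $H$ — a rational nef and big class controlled by $-(K_X+B)$, using that $X$ is a Mori dream space — and then tracking which divisor plays the role of $-(K_X+B_i')$ throughout, is the delicate point; with it in hand, Proposition~\ref{birkar2} (in the non-nef form noted after its statement) and the remaining construction of Theorem~\ref{thm4.2} go through as before, producing a rational number $k$ depending only on $d,\theta,\mathscr{S}'$ and, for each $i$, a strong klt $k$-complement $K_X+\Theta_i$ of $K_X+B_i'$ with $(X,\Theta_i)$ klt, $\Theta_i\ge B_i'$, $k(K_X+\Theta_i)\sim0$, and the coefficients of $\Theta_i$ in a finite set $S_2\subset\bQ\cap[0,1]$ depending only on $d,\theta,\mathscr{S}'$. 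Setting $S_1:=\{a_i\}_{i=1}^r$ and $\Theta:=\sum_ia_i\Theta_i$, we have $\Theta\ge\sum_ia_iB_i'\ge B$, $K_X+\Theta\sim_{\bR}0$, and $(X,\Theta)$ klt (its log discrepancies are the convex combinations of those of the $(X,\Theta_i)$), so $K_X+\Theta$ is a strong klt $(k,S_1,S_2)$-complement of $K_X+B$; and since each $(X,\Theta_i)$ is klt log Calabi--Yau with $\Theta_i-B_i'\sim_{\bR}-(K_X+B_i')$ big, hence $\Theta_i$ big with coefficients in the fixed rational DCC set $S_2$, Theorem~\ref{hx} shows the $X$ form a bounded family, and as $\Supp B\subseteq\Supp\Theta_1$ the family $\mathcal{D}'$ is log bounded.
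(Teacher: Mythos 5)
You correctly identify the combinatorial preparation (Theorems~\ref{ACCPELC}, \ref{HLS}, \ref{g'}), the auxiliary divisors $B_i'=\sum_jg_i'(b_j)B_j$, the bounds $\alpha(X,B_i')\gtrsim\tfrac14\alpha(X,B)$ and $\mathrm{vol}(-(K_X+B_i'))\gtrsim\tfrac1{4^d}\mathrm{vol}(-(K_X+B))$, and the final assembly $\Theta=\sum_ia_i\Theta_i$ with $S_1=\{a_i\}$. That part matches the paper.

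The genuine gap is exactly the ``delicate point'' you flag and then leave unresolved: $-(K_X+B_i')$ need not be nef, so $(X,B_i')$ is not a weak Fano pair and Theorem~\ref{thm4.2} does not apply to it as stated. You propose to re-run the internal argument of Theorem~\ref{thm4.2} with a substitute nef divisor $H$, but you do not construct such an $H$, do not verify that the various places where nefness and bigness of $-(K_X+B_i')$ are used in that proof (not just in Lemma~\ref{thm2}/Corollary~\ref{cor2} and Lemma~\ref{kollar}, but also in forming the final Calabi--Yau boundary $\Theta_i$ and in applying Theorem~\ref{FM}, which requires $-(K_X+\cdot)$ nef) all go through with $H$, and you do not check that the resulting constants remain uniform. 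As written, this leaves the theorem unproved.

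The paper closes this gap differently and more cleanly: since $X$ is of Fano type and $-(K_X+B_i')$ is pseudo-effective, one runs a $-(K_X+B_i')$-MMP to a model $Y_i$ on which the strict transform of $-(K_X+B_i')$ is nef (and still big), so $(Y_i,\sum_jg_i'(b_j)B_{Y_ij})$ is an honest klt weak Fano pair. Lemma~\ref{alpineq} and the monotonicity of volume under pushforward show the $\alpha$-invariant and volume bounds survive the MMP, so Theorem~\ref{thm4.2} can be applied as a black box on $Y_i$; Lemma~\ref{qfact} then pulls the resulting strong klt $k$-complement back to $X$. I would encourage you to replace the ``construct an $H$'' plan by this MMP step; without it, your argument does not compile into a proof.

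One further small slip: you claim $\Theta_i$ is big because $\Theta_i-B_i'\sim_{\bR}-(K_X+B_i')$ is big, and then invoke Theorem~\ref{hx} to get boundedness. But after the MMP (or in your setting, on $X$ itself) the complement you build is for the auxiliary pair, and the boundedness conclusion in the paper comes from applying Theorem~\ref{hx} inside the proof of Theorem~\ref{thm4.2} where bigness of the Calabi--Yau boundary is arranged explicitly; it does not follow merely from $\Theta_i\ge B_i'$. This is minor compared to the nefness gap, but worth tightening.
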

\begin{proof}
As in the proof of Theorem \ref{thm4.2}, by Theorem \ref{hx} and by Lemma \ref{qfact}, we may assume $X$ is $\bQ$-factorial and it is enough to show the existence of $k$.
Replacing $\mathscr{S}'$ by its closure, we may assume it is closed.

Let $\Gamma_1$, $\Gamma_2$, $r$, $a_i$, $g$, $\Gamma'_i$, $g_i$, $\mathscr{S}_i$ and $g'_i$ be as given in Theorem \ref{g'}.
Let $(X,B)\in\mathcal{D}'$ and write $B=\sum_j b_jB_j$, where $B_j$ are distinct Weil $\bQ$-Cartier divisors.
Then we have $B\leq\sum_i(a_i\sum_j g'_i(b_j)B_j)$ and $\sum_j g'_i(b_j)B_j\leq\frac{B+3\sum_j g(b_j)B_j}{4}$ for every $1\leq i\leq r$. Furthermore, $(X,\sum_j g(b_j)B_j)$ is lc and $-(K_X+\sum_j g(b_j)B_j)$ is pseudo-effective by Theorem \ref{ACCPELC}.
So $(X,\sum_j g'(b_j)B_j)$ is klt for every $1\leq i\leq r$.

For each fixed $i$, we consider the following argument. Sine $X$ is of Fano type and $-(K_X+\sum_j g'_i(b_j)B_j)$ is pseudo-effective, we may run a $-(K_X+\sum_j g'_i(b_j)B_j)$-MMP and reach a  model $Y_i$ such that $-(K_{Y_i}+\sum_j g'_i(b_j)B_{Y_ij})$ is nef, where $B_{Y_ij}$ is the strict transform of $B_j$ on $Y_i$. Therefore, by Lemma \ref{alpineq}, $({Y_i},\sum_j g'_i(b_j)B_{Y_ij})$ is a klt weak Fano pair.
Since $(X,\sum_j g(b_j)B_j)$ has an $\bR$-complement, so does $({Y_i},\sum_j g(b_j)B_{Y_ij})$.
On the other hand, by Lemma \ref{alpineq} and the fact that volume is invariant under small birational maps and is increased by pushing-forward through morphisms, we have
\begin{enumerate}[label=(\roman*)]
\item  $\mathrm{vol}({Y_i},-(K_{Y_i}+\sum_j b_jB_{Y_ij}))\geq\mathrm{vol}(X,-(K_X+B))>\theta$, and
\item  $\alpha({Y_i},\sum_j b_jB_{Y_ij})>\theta$.
\end{enumerate}

By Lemma \ref{qfact}, we may replace $X$ by $Y$ and replace $B_j$ by $B_{Y_ij}$ except that $(X,\sum_j g'_i(b_j)B_j)$ is klt weak Fano but $(X,B)$ might not be weak Fano any more.

We estimate that
\begin{align*}
&\mathrm{vol}(X,-(K_X+\sum_j g'_i(b_j)B_j))\\
=&\mathrm{vol}(-(K_X+\sum_j g(b_j)B_j)+\sum_j g(b_j)B_j-\sum_j g'_i(b_j)B_j)\\
\geq&\mathrm{vol}(-(K_X+\sum_j g(b_j)B_j)+\sum_j g(b_j)B_j-\frac{B+3\sum_j g(b_j)B_j}{4})\\
\geq&\mathrm{vol}(-\frac{1}{4}(K_X+\sum_j g(b_j)B_j)+\frac{\sum_j g(b_j)B_j-B}{4})\\
=&(\frac{1}{4})^d\mathrm{vol}(-(K_X+B)).
\end{align*}
On the other hand,
\begin{align*}
\alpha(X,\sum_j g'_i(b_j)B_j)=&\mathrm{lct}((X,\sum_j g'_i(b_j)B_j),|-(K_X+\sum_j g'_i(b_j)B_j)|_{\bR})\\
\geq&\mathrm{lct}((X,\frac{B+3\sum_j g(b_j)B_j}{4}),|-(K_X+B)|_{\bR})\\
\geq&\frac{1}{4}\mathrm{lct}((X,B),|-(K_X+B)|_{\bR})\\
=&\frac{1}{4}\alpha(X,B),
\end{align*}
where that last inequality follows from the following argument.
If $0<t\leq \frac{1}{4}\mathrm{lct}((X,B),|-(K_X+B)|_{\bR})$, then for any $M\in|-(K_X+B)|_{\bR}$, $(X,B+4tM)$ is lc.
It follows that $(X,\frac{B+4tM+3\sum_j g(b_j)B_j}{4})$ is lc by the linearity of log discrepancies because $(X,\sum_jg(b_j)B_j)$ is lc.
Therefore we have \[t\leq \mathrm{lct}((X,\frac{B+3\sum_j g(b_j)B_j}{4}),|-(K_X+B)|_{\bR}).\]

As a conclusion, $\mathrm{vol}(X,-(K_X+\sum_j g'_i(b_j)B_j))$ and $\alpha(X,\sum_j g'_i(b_j)B_j)$ are both bounded below away from $0$ and $(X,\sum_j g'_i(b_j)B_j)$ is a klt weak Fano pair.

By Theorem \ref{thm4.2}, for each $i$, there is a rational number $k_i$ depending only on $d$, $\theta$ and $\mathscr{S'}$ such that there is a strong klt $k_i$-complement $K_X+\Theta_i$ of $K_X+\sum_jg_i(b_j)B_j$.
Replacing $k_i$ by $k\defeq\prod_{h=1}^r k_h$ for each $i$, we may assume that $k_i=k$.
Let $S_1=\{a_i\}_i$ and $S_2=\frac{1}{k}\mathbb{N}\cap[0,1]$.
Let $\Theta\defeq\sum_{i=1}^r a_i\Theta_i$. Then $K_X+\Theta$ is the complement we want since $K_X+\Theta\geq K_X+\sum_i(a_i\sum_j g'_i(b_j)B_j)\geq K_X+B$.

\end{proof}

\begin{thm}\label{bddcomp}
Fix a positive integer $d$, and a positive real number $\theta$. Let $\mathcal{P}$ be a log bounded set of all klt Fano pairs $(X,B)$ satisfying
\begin{enumerate}
\item $\dim X=d$,
\item $\alpha(X,B)>\theta$, and
\item $(-(K_X+B))^d>\theta$.
\end{enumerate}
Then there is a natural number $k$, depending only on $d$, $\theta$ and $\mathcal{P}$, such that for every $(X,B)\in \mathcal{P}$, there is a strong klt $k$-complement $K_X+\Theta$ of $K_X+B$.
\end{thm}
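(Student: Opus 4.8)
The plan is to reduce to Theorem \ref{thm4.2}. By Lemma \ref{qfact}, applied to a small $\bQ$-factorialization, we may assume $X$ is $\bQ$-factorial; by log boundedness we fix a bounded family realizing $\mathcal{P}$, so that $\Supp B$ has boundedly many components, each of bounded degree with respect to a fixed bounded polarization, and $(-(K_X+B))^{d-1}\cdot D\le(-K_X)^{d-1}\cdot D\le c_1$ for every component $D$ of $B$, with $c_1$ depending only on $d$ and $\mathcal{P}$.

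The crucial point is that $(X,B)$ is $\epsilon_0$-lc for some fixed $\epsilon_0=\epsilon_0(d,\theta,\mathcal{P})>0$. Since $(-(K_X+B))^d>\theta$ and $(-(K_X+B))^{d-1}\cdot D\le c_1$, continuity and differentiability of the volume give $c_0:=\theta/(2dc_1)>0$ with $\mathrm{vol}(-(K_X+B)-c_0D)\ge\theta/2>0$, hence there is $0\le M_D\sim_\bR-(K_X+B)$ with $M_D\ge c_0D$, for every component $D$ of $B$. Let $E$ be a divisor over $X$. If the centre of $E$ is contained in a component $D$ of $B$, then $v_E(M_D)\ge c_0 v_E(D)\ge c_0$, so $(X,B+tM_D)$ fails to be lc once $t>a(E,X,B)/c_0$; hence $\alpha(X,B)\le a(E,X,B)/c_0$, i.e. $a(E,X,B)\ge c_0\theta$. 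If the centre of $E$ is not contained in $\Supp B$, choose $0\le M\sim_\bR-(K_X+B)$ passing through it; then $v_E(M)\ge 1$ and $v_E(B)=0$, so the same argument gives $a(E,X,B)=a(E,X,0)\ge\alpha(X,B)>\theta$. Therefore $(X,B)$, and a fortiori $X$, is $\epsilon_0$-lc with $\epsilon_0:=\min(c_0,1)\theta$; in particular the coefficients of $B$ are at most $1-\epsilon_0$.

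Next I would stratify the bounded family into finitely many pieces, so that it suffices to complement pairs of the form $(X,\sum_i b_iD_i)$ for a fixed couple $(X,\sum_i D_i)$. Using the $\epsilon_0$-lc bound, the bound on the degree of $\Supp B$, and the fact that only finitely many valuations over $X$ are relevant up to a bounded birational modification, the tuples $(b_i)$ for which $(X,\sum_i b_iD_i)$ is $\epsilon_0$-lc and satisfies $\alpha>\theta$, $(-(K_X+B))^d>\theta$ lie in a rational polytope of bounded complexity; enlarging each $b_i$ slightly to a vertex coordinate replaces $(X,B)$ by a pair $(X,B')$ with $B'\ge B$, with the coefficients of $B'$ in a fixed finite set of rationals, and — since the small enlargement cannot destroy the $\epsilon_0$-lc property, nor, via Lemma \ref{alpineq} and a $-(K_X+B')$-MMP as in the proof of Theorem \ref{DCCmain}, the lower bounds on the $\alpha$-invariant and the anticanonical volume — a klt weak Fano pair with $\alpha(X,B')$ and $(-(K_X+B'))^d$ still bounded away from $0$.

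Finally, Theorem \ref{thm4.2} yields a strong klt $k'$-complement of $K_X+B'$ with $k'$ depending only on $d,\theta,\mathcal{P}$. Transporting it back by Lemma \ref{qfact}, and taking $k$ to be a common multiple of $k'$ and of the common denominator $N$ of the (fixed, finite) set of possible coefficients of $B'$, the resulting divisor $\Theta$ satisfies $\Theta\ge B'\ge B$ and $k(K_X+\Theta)\sim 0$, and also $k\Theta\ge kB'\ge k\lfloor B\rfloor+\lfloor(k+1)\{B\}\rfloor$: indeed $\lfloor B\rfloor=0$ as the coefficients of $B$ are $<1$, and for $k$ divisible by $N$ the coefficient $kb'_i$ of $kB'$ along $D_i$ is an integer with $kb'_i=\lfloor(k+1)b'_i\rfloor\ge\lfloor(k+1)b_i\rfloor$. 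Hence $K_X+\Theta$ is a strong klt $k$-complement of $K_X+B$. I expect the main obstacle to be the control of the singularities — equivalently, of the $\alpha$-invariant and the volume — under the passage from $B$ to the finite-coefficient boundary $B'$; everything else is routine given the $\epsilon_0$-lc bound and Theorem \ref{thm4.2}.
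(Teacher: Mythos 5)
Your reduction to Theorem \ref{thm4.2} and the overall plan (perturb $B$ to $B'$ with coefficients in a finite rational set, preserving the hypotheses, then apply Theorem \ref{thm4.2}) is the same as the paper's. But the central step you flag as ``crucial'' --- that $(X,B)$ is $\epsilon_0$-lc for $\epsilon_0$ depending only on $d,\theta,\mathcal{P}$ --- is not established by your argument, and in fact the argument breaks down in the second case. If the centre $Z$ of $E$ is not contained in $\Supp B$, you propose to ``choose $0\le M\sim_\bR -(K_X+B)$ passing through $Z$'' and claim $v_E(M)\ge 1$. This is not possible in general: since $-(K_X+B)$ is ample, the asymptotic order of vanishing along \emph{any} divisorial valuation $E$ is $0$, so for every $\epsilon>0$ there is $M\sim_\bR -(K_X+B)$, $M\ge 0$, with $v_E(M)<\epsilon$, and there is no reason why one can force $v_E(M)\ge 1$ by making $M$ merely pass through $Z$ ($M$ is an $\bR$-combination of divisors, and each may vanish along $E$ to an arbitrarily small extent). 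Consequently the chain $a(E,X,B)=a(E,X,0)\ge\alpha(X,B)$ does not follow. A secondary issue in the first case: the inequality $(-(K_X+B))^{d-1}\cdot D\le (-K_X)^{d-1}\cdot D$ requires some positivity of $-K_X$ (nefness, say), which is not assumed; the difference involves intersection numbers with $B$ that may be negative.

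The paper sidesteps the $\epsilon_0$-lc claim entirely: it uses log boundedness plus the volume lower bound to produce a single $a>0$ with $-(K_X+B+a\Supp B)$ effective, and then $\alpha(X,B)>\theta$ to shrink $a$ so that $(X,B+a\Supp B)$ is klt. This only controls log discrepancies along $\Supp B$, which is what is actually needed. With such an $a$ fixed, one takes $k>2/a$, rounds every coefficient of $B$ up to the next multiple of $1/k$ to get $\overline B$ with $B\le\overline B\le B+\tfrac{a}{2}\Supp B$ and $k\overline B$ integral, checks $(X,2\overline B-B)$ is klt with an $\bR$-complement, runs a $-(K_X+\overline B)$-MMP and uses Lemma \ref{alpineq} to land in a weak Fano pair with $\alpha$- and volume-invariants still bounded below (as in Theorem \ref{DCCmain}), and finally applies Theorem \ref{thm4.2}. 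In contrast, your ``stratify the bounded family into finitely many pieces'' and ``rational polytope of bounded complexity'' step is left unspecified; in particular it is not clear how enlarging $b_i$ to a vertex keeps $B'\ge B$, nor why enlarging preserves rather than destroys klt-ness, nor why the $\alpha$- and volume-bounds transfer. If you want to salvage your approach, I would drop the full $\epsilon_0$-lc claim (it is both unproved and unnecessary), keep the part of your argument that gives $(X,B+a\Supp B)$ klt, and then replace the polytope step with the explicit coefficient rounding and $-(K_X+\overline B)$-MMP of the paper.
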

\begin{proof} 
By Lemma \ref{qfact}, we may assume $X$ is $\bQ$-factorial.
Since $\mathcal{P}$ is log bounded and $(-(K_X+B))^d>\theta$, there is a positive real number $a$ such that for every $(X,B)\in \mathcal{P}$, $-(K_X+B+a\Supp B)$ is effective. Since $\alpha(X,B)>\theta$, by replacing $a$, we may assume that $(X,B+a\Supp B)$ is klt. Therefore, there is a natural number $k$ such that for every $(X,B)\in \mathcal{P}$, there is a divisor $\overline{B}\geq B$ on $X$ such that $\overline{B}\leq B+\frac{1}{2}a\Supp B$ and $k\overline{B}$ is an integral divisor. Therefore, $(X,2\overline{B}-B)$ is klt and $-K_X-2\overline{B}+B$ is pseudo-effective. Replacing $a$ further, we may assume that $(X,2\overline{B}-B)$ has an $\bR$-complement. We may run a $-(K_X+\overline{B})$-MMP and reach a model $Y$ such that $-(K_{Y}+\overline{B}_Y)$ is nef, where $\overline{B}_Y$ and $B_Y$ are the strict transforms of $\overline{B}$ and $B$ on $Y$ respectively.
Therefore, by Lemma \ref{alpineq}, $(Y,\overline{B}_Y)$ is a klt weak Fano pair, and $(Y,2\overline{B}_Y-B_Y)$ has an $\bR$-complement.
By Lemma \ref{alpineq} and the fact that volume is invariant under small birational maps and is increased by pushing-forward through morphisms, we have
\begin{enumerate}[label=(\roman*)]
\item  $\mathrm{vol}(Y,-(K_Y+B_Y))\geq\mathrm{vol}(X,-(K_X+B))>\theta$, and
\item  $\alpha(Y,B_Y)>\theta$.
\end{enumerate}

By Lemma \ref{qfact}, we may replace $X$, $B$ and $\overline{B}$ by $Y$, $B_Y$ and $\overline{B}_Y$ respectively except that $(X,\overline{B})$ is klt weak Fano but $(X,B)$ might not be weak Fano any more.
By the same computation as in Theorem \ref{DCCmain}, we can bound $\alpha(X,\overline{B})$ and $(-(K_X+\overline{B}))^d$ below away from $0$. Now we apply Theorem \ref{thm4.2} and we are done.
\end{proof}

\begin{cor}
Fix a positive integer $d$ and positive real numbers $\theta$ and $\delta$. Let $\mathcal{P}$ be the set of all projective pairs $(X,B)$ satisfying
\begin{enumerate}
\item $(X,B)$ is a klt weak Fano pair of dimension $d$,
\item $B$ is an effective $\mathbb{Q}$-divisor on $X$,
\item $\alpha(X,B)^d(-(K_X+B))^d>\theta$, and
\item coefficients of $B>\delta$.
\end{enumerate}
Then there is a natural number $k$, depending only on $d$, $\theta$ and $\delta$, such that for every $(X,B)\in \mathcal{P}$, there is a strong klt $k$-complement $K_X+\Theta$ of $K_X+B$.
\end{cor}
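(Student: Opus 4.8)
The plan is to reduce to Theorem \ref{bddcomp}. That theorem produces a uniform $k$ from three inputs: log boundedness of the family, a uniform lower bound $\alpha(X,B)>\theta'$, and a uniform lower bound $(-(K_X+B))^d>\theta'$. So I have to extract all three from conditions (1)--(4). Boundedness of the underlying varieties is exactly what Theorem \ref{LLX} gives (it only uses (1)--(3)); the coefficient hypothesis (4) is precisely what upgrades this to log boundedness; and log boundedness together with (3) will yield the two numerical bounds.

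\textbf{Step 1: log boundedness.} By Theorem \ref{LLX} the set $\{X\mid (X,B)\in\mathcal P\}$ is bounded; fix such a bounding family $\mathcal X\to T$ with a relatively very ample $\HH$. Since $-(K_X+B)$ is nef and $B\ge 0$, one has $B\cdot\HH^{d-1}\le -K_X\cdot\HH^{d-1}$, and the right-hand side is bounded over $T$; as every coefficient of $B$ exceeds $\delta$, $(\Supp B)\cdot\HH^{d-1}\le\delta^{-1}B\cdot\HH^{d-1}$ is bounded, so the couples $(X,\Supp B)$ form a log bounded family, i.e. $\mathcal P$ is log bounded. In particular $V\defeq(-(K_X+B))^d=\mathrm{vol}(X,-(K_X+B))$ is bounded above by some $V_{\max}=V_{\max}(d,\theta,\delta)$.

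\textbf{Step 2: the two numerical bounds.} From $\alpha(X,B)^dV>\theta$ and $V\le V_{\max}$ we get at once $\alpha(X,B)>(\theta/V_{\max})^{1/d}=:\theta_1>0$. The remaining point is a uniform lower bound $V>\theta_2>0$; equivalently (using Lemma \ref{kollar}, $\alpha(X,B)^dV\le d^d$), a uniform \emph{upper} bound on $\alpha(X,B)$. I would argue by contradiction: a sequence $(X_i,B_i)\in\mathcal P$ with $V_i\to 0$ would, after passing to anti-log-canonical models (permissible by Lemma \ref{alpineq}, as $X_i$ is of Fano type, with $\alpha$ and $V$ unchanged, cf. also Lemma \ref{qfact}) and to a subsequence inside the log bounded family, converge to a pair $(X_0,B_0)$ with $-(K_{X_0}+B_0)$ nef of numerical dimension $<d$; one then bounds $\alpha(X_i,B_i)$ above by producing, for each $i$, an anti-log-canonical $\bR$-divisor $M_i$ with small log canonical threshold — using that the coefficient $\delta$ keeps a component of $B_i$ away from a degenerate position — so that $\alpha(X_i,B_i)^dV_i\to 0$, contradicting (3).

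\textbf{Step 3: conclusion.} Now $\mathcal P$ is a log bounded family of klt weak Fano pairs with $\alpha(X,B)>\min\{\theta_1,\theta_2\}$ and $(-(K_X+B))^d>\min\{\theta_1,\theta_2\}$. Replacing $(X,B)$ by its anti-log-canonical model makes it Fano (Lemma \ref{alpineq}, Lemma \ref{qfact}), so Theorem \ref{bddcomp} applies and yields a natural number $k$, depending only on $d$, $\theta$, $\delta$, together with a strong klt $k$-complement $K_X+\Theta$ of $K_X+B$ for every $(X,B)\in\mathcal P$; pulling it back along $X\dashrightarrow X'$ via Lemma \ref{qfact} handles the weak Fano case.

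\textbf{Main obstacle.} The crux is Step 2, specifically the uniform lower bound on $V=(-(K_X+B))^d$ (equivalently the upper bound on $\alpha(X,B)$): log boundedness alone does not bound $V$ below, since the anti-log-canonical volume can degenerate, and it is only the product condition (3) — forcing $\alpha(X,B)$ to stay bounded along any sequence with $V\to 0$ — that rules this out. This is exactly the place where hypotheses (3) and (4) must be combined in an essential, non-formal way.
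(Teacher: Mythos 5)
Your overall strategy matches the paper's exactly: the paper's entire proof is the one line ``This follows from Theorem \ref{LLX} and Theorem \ref{bddcomp},'' and you are unpacking precisely that. Step~1 (boundedness of $X$ from Theorem \ref{LLX}, upgraded to log boundedness of $(X,\Supp B)$ via nefness of $-(K_X+B)$ and the coefficient bound $\delta$) is correct, and so is the first half of Step~2: log boundedness caps $V=(-(K_X+B))^d$ by some $V_{\max}$, whence (3) gives $\alpha(X,B)>(\theta/V_{\max})^{1/d}$.

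The genuine gap is the second half of Step~2, the uniform lower bound on $V$, equivalently a uniform upper bound on $\alpha(X,B)$. You correctly identify this as the crux, but the argument you sketch does not close it. Passing to anti-log-canonical models via Lemma \ref{alpineq} only shows that $\alpha$ and $V$ can \emph{increase} under the MMP, so along a putative sequence with $V_i\to 0$ you would still have $\alpha_i\to\infty$ but you do \emph{not} retain $V_i\to 0$ on the models; and more importantly you never actually construct the anti-log-canonical divisors $M_i$ of uniformly bounded log canonical threshold that would force $\alpha(X_i,B_i)^dV_i\to 0$. The phrase ``using that the coefficient $\delta$ keeps a component of $B_i$ away from a degenerate position'' is not an argument: to exhibit such an $M_i$ one would need, for instance, that a fixed positive multiple of some component of $\Supp B_i$ or of the polarization can be subtracted from $-(K_{X_i}+B_i)$ while remaining pseudo-effective \emph{uniformly in $i$}, and this is exactly the property that fails when $V_i\to 0$. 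Kollár's inequality $\alpha^dV\le d^d$ (Lemma \ref{kollar}) only bounds the product from above and by itself does not exclude $\alpha\to\infty$, $V\to 0$ with the product staying above $\theta$. Without this step you cannot invoke Theorem \ref{bddcomp}, whose hypotheses are the two \emph{separate} bounds $\alpha>\theta$ and $V>\theta$ and whose proof uses the latter essentially (to produce a uniform $a$ with $-(K_X+B+a\Supp B)$ effective). The paper's one-line citation does not spell out this reduction either; but as written, your Step~2 is a sketch of an approach rather than a proof, and it is the load-bearing step.
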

\begin{proof}
This follows from Throem \ref{LLX} and Theorem \ref{bddcomp}.
\end{proof}


\end{document}